\numberwithin{equation}{section}
\theoremstyle{plain}
\newtheorem{theorem}{\indent Theorem}[section]
\newtheorem{proposition}[theorem]{\indent Proposition}
\theoremstyle{definition}
\newtheorem{definition}[theorem]{\indent Definition}
\newtheorem{example}[theorem]{\indent Example}
\newtheorem{remark}[theorem]{\indent Remark}
\newcommand{\cd}{{\cdot}}
\newcommand{\ot}{{\otimes}}
\newcommand{\hil}{\stackrel{\cdot}{\otimes}}
\newcommand{\cK}{{\cal F}}
\newcommand{\la}{\langle}
\newcommand{\ra}{\rangle}
\newcommand{\id}{{\bf 1}}
\newcommand{\di}{\diamondsuit}
\newcommand{\q}{\quad}
\newcommand{\qq}{\qquad}
\newcommand{\va}{\varphi}
\newcommand{\rr}{{\cal R}}
\newcommand{\ii}{\infty}
\newcommand{\mt}{\mapsto}
\newcommand{\co}{{\mathbb C}}
\newcommand{\mmdd}{\mathrel{\mathop{\otimes}\limits_{\cdot}}}
\newcommand{\bb}{{\cal B}}
\newcommand{\al}{{\alpha}}     
\newcommand{\de}{{\delta}}     \newcommand{\lm}{{\lambda}}
\newcommand{\om} {{\omega}}
\def\x{\relax\ifmmode {\mbox{*}}\else*\fi}
\newcommand{\ed}{\end{document}}
\begin{document}
\pagestyle{myheadings} \markboth{\centerline{\small{\sc
            A.~Ya.~Helemskii}}}
         {\centerline{\small{\sc Structures on the way  from classical to quantum spaces}}}
 \title{\bf Structures on the way  from classical to quantum spaces and their tensor products }
 \footnotetext{Keywords: proto-Lambert space, L-bounded operator, proto-Lambert tensor product,
 Lambert space, Lambert tensor product.}
  \footnotetext{Mathematics Subject
 Classification (2000): 46L07, 46M05.}

\author{\bf A.~Ya.~Helemskii}

\date{}

\maketitle

\noindent
\bigskip
\centerline{{\it In memoriam: Professor Charles Read}}

\bigskip
\begin{abstract}
We study tensor products of two structures situated, in a sense, between normed spaces and 
(abstract) operator spaces. We call them Lambert and proto-Lambert spaces and pay more attention to 
the latter ones. 
The considered two tensor products 
lead to essentially different norms in the respective spaces. Moreover, the proto-Lambert tensor 
product is especially nice for spaces with the maximal proto-Lambert norm and in particular, for 
$L_1$-spaces. At the same time the Lambert tensor product is nice for Hilbert spaces with the 
minimal Lambert norm. 
\end{abstract}

\setcounter{equation} {0}   
\section{Introduction}

The subject of the present paper is a structure on a linear space that, in a reasonable sense, is 
situated between the classical structure of a normed space and the structure of an abstract 
operator, or quantum space. The latter structure was discovered about 35 years ago; nowadays the 
theory of operator spaces, sometimes called quantum functional analysis, is a well developed area 
of modern functional analysis, presented in widely known textbooks~\cite{efr,pis,paul,blem}. 
Leading idea of that area was to investigate not just norm on a given linear space, say $E$, but a 
sequence of norms $\|\cd\|_n;n=1,2,\dots$, each one on the space of $n\times n$ matrices with 
entries in $E$, mutually related by certain natural conditions, the so-called Ruan axioms. 

 The above-mentioned intermediate structure appeared in 2002 in the Ph.D thesis of
A. Lambert~\cite{lam}; his superviser was G.Wittstock, one of the founding fathers of operator 
space theory. It was Lambert who suggested to consider, for every $n$, not a norm 
 on the matrix space $M_n(E)$ but a norm on the column space of length $n$,
consisting of vectors from $E$. He called the resulting sequence of norms {\it operator-sequential 
norm on $E$}, if it satisfied two natural axioms. Lambert developed a beautiful and rich theory, in 
particular, clarifying (putting in proper perspective) some aspects of quantum as well as classical 
functional analysis. As one of the achievements of his theory, Lambert shows that for his spaces 
there exists a concept of tensor product with good properties. One can say that his tensor product 
is on the way from the projective tensor product of normed spaces to the operator-projective tensor 
product of quantum spaces. 

In the present paper we study some properties of the Lambert's tensor product. But we also pay much 
attention to a certain natural generalization of 
Lambert's ``operator--sequential space''. It is called a proto-Lambert space, and it arises when we 
assume that only the first of Lambert's axioms for his spaces  is fulfilled. 
Our point is that one can obtain 
 many good things, if he considers proto--Lambert, and not, generally speaking, Lambert spaces. 
 
  Note that these
 proto--Lambert spaces are, after translation into an equivalent language, an important
 particular case of the so--called $p$--multi--normed spaces. The latter were quite recently (just in 
 time when this paper was under preparation)
 introduced and successfully 
 studied by  H.Dales, N.Laustsen, T.Oikhberg and V.Troitsky in the memoir~\cite{dal}. Proto--Lambert
 spaces correspond to the case of $p=2$. Thus, 
 they share the general properties of $p$--multi--normed spaces, investigated in the cited memoir. However,
 the properties, considered in our paper, rely heavily on the specific advantages of that particular 
 $p$, actually the same advantages that distinguish $\ell_2$ among all $\ell_p$.

Our presentation will be given in the frame--work of the so--called non--coordinate 
(``index--free'') approach to the structures in question, similar to what was done in~\cite{heb2} 
for operator spaces. Thus, it is different from the original approach in~\cite{lam}. Both ways of 
presentation have their own advantages (and drawbacks), but in questions, revolving around tensor 
products, the non--coordinate, ``index--free'' presentation is, in our subjective opinion, more 
elegant and transparent. 

The contents of the paper are as follows. 

 The second section contains the definition of a proto--Lambert (still not Lambert) space
 and some examples, notably the space $L_p(X,E);1\le p<\ii$ of relevant $E$--valued measurable 
 functions on $X$.
 (Running ahead, we note that this space is a Lambert space only when $p\ge2$).

 In Section 3, we consider classes of maps that reflect in a proper way the structure of a
 proto--Lambert space: $L$--bounded and $L$--contractive linear and bilinear operators. We prove that
 some classes of (bi)linear operators have the relevant properties and, in particular,
  some bilinear operators, related to $L_p$--spaces and to ``classical'' projective tensor products of
  normed spaces, are completely contractive.

 In Section 4, we show that proto--Lambert spaces have their own tensor product \\ `` $\ot_{pl}$ '',
  possessing the universal property for the class of $L$--bounded  bilinear operators between
these spaces. 

In Section 5, we concentrate on the case when one of the tensor factors is a space with the 
so--called maximal proto-Lambert norm, in particular, an $L_1(X)$--space, and give an explicit 
description of the resulting  proto-Lambert tensor product. As a corollary, we obtain a version, 
for proto-Lambert spaces, of the Grothendieck's theorem on tensoring by $L_1$--spaces in the 
``classical'' context of Banach spaces: cf., e.g.,~\cite[\S2,n$^o$2]{gro}. 


In Section 6, we pass from proto--Lambert to Lambert spaces, adding in the relevant definition the 
non-coordinate analogue of the second of the Lambert's axioms. We introduce the respective version 
`` $\ot_{l}$ ''of Lambert's ``maximal tensor product'' of his Operatorfolgenr\"aume and prove its 
existence. 

We have seen in Section 5 that the proto-Lambert tensor product is especially nice for 
$L_1$--spaces. In Section 7 we show that the Lambert (without ``proto--'') tensor product is nice 
for Hilbert spaces. Namely, if we equip both of Hilbert spaces with the so--called minimal Lambert 
norm, then their completed Lambert tensor product is again a Hilbert space with the same structure. 

In the last Section 8, we compare both tensor products,`` $\ot_{pl}$ '' and `` $\ot_{l}$ '', and 
show that the first one provides, generally speaking, essentially greater norms. In particular, for 
every $n$ we display a certain element in the amplification of the tensor square of a certain 
Lambert space. It turns out that the norm of this element,  provided by the proto--Lambert tensor 
product, is $n$, whereas the norm, provided by the Lambert tensor product, is $\sqrt{n}$. 

\setcounter{equation} {1} 
\section{Proto-Lambert spaces and their examples}
To begin with, we choose an arbitrary, separable, infinite-dimensional Hilbert space, denote it by 
$H$ and fix it throughout the whole paper. The identity operator on $H$ will be denoted by $\id$. 

As usual, by $\bb(E,F)$ we denote the space of all bounded operators between the normed spaces $E$ 
and $F$, endowed with the operator norm. We write $\bb(E)$ instead of $\bb(E,E)$, and also $\bb$ 
instead of $\bb(H)$. 

If $K,L$ are pre--Hilbert spaces, 
 $x\in K, y\in L$, we denote by $x\circ y:L\to K$ the rank 1 operator, taking $z$ to $\la z,y\ra x$ . Note that we have $\|x\circ y\|=\|x\|\|y\|$.

The symbol $\ot$ is used for the (algebraic) tensor product of linear spaces and for 
 elementary tensors. The symbols $\ot_p$ and $\ot_i$ denote the non--completed projective and injective 
 tensor product of normed spaces, respectively, and the symbol $\ot_{hil}$ is used for the non-completed 
 Hilbert tensor product of pre--Hilbert spaces. The symbols $\widehat\ot_p$, $\widehat\ot_i$ and 
 $\widehat\ot_{hil}$ are used for the respective completed tensor products.
The complex--conjugate space of a linear space $E$ is denoted by $E^{cc}$. 

\medskip
In what follows we need the triple notion of the so-called amplification. First, we amplify linear 
spaces, then linear operators and finally bilinear operators. Note that these amplifications differ 
from the amplifications, serving in the theory of quantum spaces (cf.~\cite{heb2}). 

The {\it amplification} of a given linear space $E$ is the tensor product $H\ot E$. Usually we 
briefly denote it by $HE$, and an elementary tensor, say $\xi\ot x; \xi\in H, x\in E$, by $\xi x$. 
Note that $HE$ is a left module over the algebra $\bb$ with the outer multiplication `` $\cd$ '', 
well defined by $a\cd(\xi x):=a(\xi)x$. 
\begin{remark}
In the non-coordinate presentation of the operator space theory the amplification of $E$ is $\cK\ot
E$, where $\cK$ is the space of finite rank bounded operators on $H$. But $\cK=H\ot H^{cc}$; so,
passing to the ``non-coordinate Lambert theory'', we replace the whole tensor product by its first
factor. (One can observe a similar transfer in the coordinate presentation: we replace
$M_n(E)=\co^n\ot(\co^n)^{cc}$ by $\co^n$).

Note that the transfer from $\cK\ot E$ to $H\ot E$ was actually used in the ``non-coordinate'' 
proof of the injective property of the Haagerup tensor product of operator spaces~\cite[Section 
7.3]{heb2} (such a property was discovered by Paulsen/Smith~\cite{pas}). 
\end{remark}
\begin{definition} \label{def1} 
A semi-norm on $HE$ is called {\it proto-Lambert semi-norm} or briefly {\it $PL$-semi-norm} on $E$, 
if the left $\bb$-module $HE$ is contractive, that is we always have the estimate $\|a\cd 
u\|\le\|a\|\|u\|$. The space $E$, endowed by a $PL$-semi-norm, is called {\it semi-normed 
proto-Lambert space} or briefly {\it semi-normed $PL$-space}). If the semi-norm in question is 
actually a norm, we speak, naturally, about a {\it normed proto-Lambert} ( = {\it normed $PL$--) 
space}, and in this case we often omit the word ``normed''. 
\end{definition}
A semi-normed $PL$--space $E$ becomes semi-normed space (in the usual sense), if  for $x\in E$ we 
set $\|x\|:=\|\xi x\|$, where $\xi\in H$ is an arbitrary vector with $\|\xi\|=1$. Clearly, the 
result does not depend on a choice of $\xi$. The obtained semi-normed space is called {\it 
underlying space} of a given $PL$-space, and the latter is called a {\it $PL$--quantization} of a 
former. (We use such a term by analogy with quantizations in operator space theory; see, 
e.g.,~\cite{ef5},~\cite{efr} or~\cite{heb2}). Obviously, for all $\xi\in H$ and $x\in E$ we have 
$\|\xi x\|=\|\xi\|\|x\|$. 

\medskip
It is easy to verify that the space of scalars, $\co$, has the only $PL$--quantization, given by 
the identification $H\co=H$. 
\begin{proposition} \label{pro1} 
 Let $E$ be a semi-normed $PL$--space with a normed underlying space. Then the $PL$--semi-norm on 
 $HE$ is a norm.
\end{proposition}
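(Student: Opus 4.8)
The plan is to show that the $PL$-semi-norm on $HE$ is definite, i.e.\ that $\|u\|=0$ forces $u=0$. First I would put a given $u\in HE$ into a convenient ``standard form''. Writing $u$ as a finite sum of elementary tensors, the first tensor factors span a finite-dimensional subspace of $H$; choosing an orthonormal basis $e_1,\dots,e_n$ of that subspace and re-expanding, we obtain $u=\sum_{i=1}^n e_i\ot x_i$ with the $e_i$ orthonormal in $H$ and $x_i\in E$. Since the $e_i$ are linearly independent, $u=0$ in $H\ot E$ if and only if $x_i=0$ for every $i$; so it suffices to prove that $\|u\|=0$ implies $x_i=0$ for each $i$.

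Next I would exploit the $\bb$-module structure of $HE$ via the rank one operators $P_i:=e_i\circ e_i$. Each $P_i$ is the orthogonal projection onto $\co e_i$, and by the formula recalled in the excerpt $\|P_i\|=\|e_i\|\|e_i\|=1$. By the definition of the outer multiplication, $P_i\cd(e_j x_j)=\la e_j,e_i\ra e_i x_j$ for orthonormal $e_j$, so that $P_i\cd u=e_i x_i$. Applying the contractivity of the module $HE$ — which is precisely the defining axiom of a $PL$-semi-norm — gives
\[
\|e_i x_i\|=\|P_i\cd u\|\le\|P_i\|\,\|u\|=0 .
\]

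Finally, by homogeneity of the semi-norm we have $\|e_i x_i\|=\|e_i\|\,\|x_i\|=\|x_i\|$, where $\|x_i\|$ denotes the (semi-)norm of $x_i$ in the underlying space of $E$. By hypothesis that underlying space is a \emph{normed} space, so $\|x_i\|=0$ forces $x_i=0$; as this holds for every $i$, we conclude $u=0$, which is what we wanted. I do not expect a serious obstacle here: the only steps that deserve a word of care are the reduction to an orthonormal standard form, so that the vanishing of $u$ is detected ``coordinatewise'', and the computation $\|P_i\|=1$, both of which are routine.
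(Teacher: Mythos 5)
Your proof is correct and follows essentially the same route as the paper: both isolate a single nonzero elementary tensor $\xi x$ from $u$ by acting with a rank-one operator from $\bb$ (you use the orthogonal projection $e_i\circ e_i$ after orthonormalizing; the paper uses $\xi_1\circ\eta$ with $\eta$ biorthogonal to a linearly independent family), then invoke the contractivity axiom and the identity $\|\xi x\|=\|\xi\|\|x\|$ together with the hypothesis that the underlying space is normed. The orthonormalization is a harmless cosmetic variant of the paper's choice of a biorthogonal vector.
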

\begin{proof}
Take $u\in HE; u\ne0$ and represent it as $\sum_{k=1}^n\xi_kx_k$, where $\xi_k$ are linearly 
independent, $\|\xi_1\|=1$ and $x_1\ne0$. Further, take $\eta\in H$ with $\la\xi_1,\eta\ra=1$ and 
$\la\xi_k,\eta\ra=0$ for $k>1$. Then $(\xi_1\circ\eta)\cd u=\xi_1x_1$. Therefore we have 
$0<\|\xi_1x_1\|\le\|\xi_1\circ\eta\|\|u\|=\|\xi_1\|\|\eta\|\|u\|$, hence $\|u\|>0$. 
\end{proof}
\begin{example} \label{ex1}
 Every normed space, say $E$, has, generally speaking, a lot of $PL$--quantizations. We distinguish two 
 of them. The 
 $PL$--space, denoted by $E_{\max}$, respectively $E_{\min}$, has the $PL$--norm, obtained by the endowing $HE$ with the norm of $H\ot_p E$,
respectively of $H\ot_i E$. We denote the norm on the former and on the latter space by 
$\|\cd\|_{\max}$ and $\|\cd\|_{\min}$, respectively; accordingly, the corresponding 
$PL$-quantizations of $E$ will be called maximal and minimal. Clearly, the $PL$-norm of $E_{\max}$ 
is the greatest of all $PL$-norms of $PL$-quantizations of $E$. The adjective ``minimal'' will be 
justified a little bit later. 
\end{example}
\begin{example} \label{ex3} 
   Let $(X,\mu)$ be a measure space, $L_p(X);1\le p<\ii$ the relevant Banach space, $F$ a normed 
   $PL$-space (say, $F:=\co$ in the simplest case). We want to endow the ``classical'' space $L_p(X,F)$ 
   (of relevant $F$-valued functions) with a $PL$-norm.

As a preliminary step, consider  the normed space $L_p(X,HF)$ of relevant $HF$-valued measurable 
functions on $X$ and observe that it is a left $\bb$-module with the outer multiplication defined 
by $[a\cd\bar x](t):=a\cd[\bar x(t)]; a\in\bb,\bar x\in L_p(X,HF), t\in X$. A routine calculation 
shows that this module is contractive. 

Now consider the operator $\al:H(L_p(X,F))\to L_p(X,HF)$, well defined on elementary tensors by 
taking $\xi x; x\in L_p(X,F),\xi\in H$ to the $HF$-valued function $\bar x(t):=\xi(x(t))$, and 
introduce  the semi-norm on $H(L_p(X,F))$, setting $\|u\|:=\|\al(u)\|$. It is easy to veryfy that
$\al$ is a $\bb$-module morphism.
 Thus there is an isometric morphism of the module $H(L_p(X,F))$ into a contractive module. It follows
 immediately that the former module is itself contractive, that is the introduced semi-norm on 
 $H(L_p(X,F))$ is a $PL$-semi-norm on $L_p(X,F)$.
Further, for $\xi\in H; \|\xi\|=1$ and $x\in L_p(X,F)$ we have $\|\xi[x(t)]\|=\|x(t)\|$ for all 
$t\in X$. Therefore for $\xi x\in H(L_p(X,F))$ we have 
$$
\|\xi x\|=\left(\int_X\|[\xi x](t)\|^pd\mu(t)\right)^\frac{1}{p}=
\left(\int_X\|x(t)\|^pd\mu(t)\right)^\frac{1}{p}.
$$
We see that the underlying semi-normed space of the constructed $PL$-space is $L_p(X,F)$. 
 Therefore Proposition \ref{pro1} guarantees that the introduced $PL$-seminorm on $L_p(X,F)$ is 
 actually a norm.
\end{example}
\begin{example} \label {ex8}
We want to introduce a $PL$--quantization of the ``classical'' tensor product $E\ot_p F$ of normed 
spaces, when one of tensor factors, say, to be definite, $F$, is a $PL$--space. 

Consider the linear isomorphism $\beta: H(E\ot F)\to E\ot_p(HF):\xi(x\ot y)\mt x\ot\xi y$ and 
introduce a norm on $H(E\ot F)$ by setting $\|U\|:=\|\beta(U)\|$. The space $E\ot_p(HF)$, as a 
projective tensor product of a normed space and a contractive $\bb$-module, has itself a standard 
structure of a contractive $\bb$-module. Since $\beta$ is a $\bb$-module morphism, the same is true 
with $H(E\ot F)$. Thus $E\ot F$ becomes a $PL$--space, and we must show that its underlying normed 
space $(E\ot F,\|\cd\|$ is exactly $E\ot_p F$. 

Denote the norm on $E\ot_p F$ and on $E\ot_p(HF)$ by $\|\cd\|_p$. Take arbitrary $u\in E\ot F$.
It is easy to check that the norm $\|\cd\|$ on $E\ot F$ is a cross-norm, we have $\|\xi 
u\|\le\|u\|_p$. Therefore our task is to show that, for $\xi\in H,\|\xi\|=1$, we have $\|\xi 
u\|\ge\|u\|_p$. 

Identifying $\bb$-modules $H(E\ot F)$ and $E\ot_p(HF)$ by means of $\beta$, 
 represent $\xi u$ as $\sum_{k=1}^nx_k\ot w_k; x_k\in E,w_k\in HF$. Set $p:=\xi\circ\xi$. 
Obviously, $p\cd w_k=\xi y_k$ for some $y_k\in F; k=1,...,n$. Therefore 
$\sum_{k=1}^n\|x_k\|\|w_k\|\ge \sum_{k=1}^n\|x_k\|\|p\cd w_k\|= 
\sum_{k=1}^n\|x_k\|\|y_k\|$. But we have $\xi u=p\cd(\xi u)=\sum_{k=1}^nx_k\ot p\cd w_k= 
\xi(\sum_{k=1}^nx_k\ot y_k)$. It follows that $u=\sum_{k=1}^nx_k\ot y_k$. Consequently, 
$\sum_{k=1}^n\|x_k\|\|w_k\|\ge\|u\|_p$, and we are done. 

From now on we denote the constructed $PL$--quantization of $E\ot_p F$ again by $E\ot_p F$. 
\end{example}

\section{$L$-bounded linear and bilinear operators}
Suppose we are given an operator $\va:E\to F$ between linear spaces. Denote, for brevity, the 
operator $\id\ot\va:HE\to HF$ (taking $\xi x$ to $\xi\va(x)$) by $\va_\ii$ and call it {\it 
amplification} of $\va$. Obviously, $\va_\ii$ is a morphism of left $\bb$-modules. 
\begin{definition} \label{def2} 
An operator $\va:E\to F$ between seminormed $PL$--spaces is called {\it $L$--bounded}, {\it 
$L$-contractive}, {\it $L$--isometric, $L$--isometric isomorphism},  if $\va_\ii$ is bounded, 
contractive, isometric, isometric isomorphism, respectively. We set $\|\va\|_{lb}:=\|\va_\ii\|$. 
\end{definition}
If $\va$ is bounded, being considered between the respective underlying seminormed spaces, we say 
that it is (just) {\it bounded}, and denote its operator seminorm, as usual, by $\|\va\|$. Clearly, 
every $L$--bounded operator $\va:E\to F$ is bounded, and $\|\va\|\le\|\va\|_{lb}$ . 

\medskip
Some operators between $PL$--spaces, bounded as operators between underlying spaces, are 
``automatically'' $L$--bounded. Here is the first phenomenon of that kind. 

\begin{proposition} \label{autc}
    Let $E$ be a $PL$--space. Then every bounded functional $f:E\to\co$ is $L$--bounded, and $\|f\|_{lb}:=\|f\|$.
\end{proposition}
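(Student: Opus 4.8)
The plan is to show that for a bounded functional $f : E \to \co$, its amplification $f_\ii = \id \ot f : HE \to H\co = H$ satisfies $\|f_\ii\| \le \|f\|$; the reverse inequality $\|f\|_{lb} \ge \|f\|$ is already noted in the text (every $L$-bounded operator is bounded with $\|f\| \le \|f\|_{lb}$), so the content is the estimate $\|f_\ii(u)\| \le \|f\|\,\|u\|$ for all $u \in HE$.

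First I would fix $u \in HE$ and write it as $u = \sum_{k=1}^n \xi_k x_k$ with the $\xi_k \in H$ linearly independent (as in the proof of Proposition \ref{pro1}). Then $f_\ii(u) = \sum_{k=1}^n \la f(x_k)\ra \xi_k = \eta$, say, a vector in $H$. The key trick is to test against a unit vector: choose $\zeta \in H$ with $\|\zeta\| = 1$ and $\la \eta, \zeta \ra = \|\eta\|$ (if $\eta \ne 0$; if $\eta = 0$ there is nothing to prove). Now consider the rank-one operator $\zeta \circ \zeta \in \bb$, which is a projection of norm $1$. By the module contractivity axiom (Definition \ref{def1}), $\|(\zeta \circ \zeta)\cd u\| \le \|u\|$. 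On the other hand, $(\zeta \circ \zeta)\cd u = \sum_k \la \xi_k, \zeta\ra \, \zeta x_k = \zeta\!\left(\sum_k \la \xi_k, \zeta\ra x_k\right)$, an elementary tensor $\zeta x$ with $x := \sum_k \la \xi_k, \zeta\ra x_k \in E$. Hence $\|x\| = \|\zeta x\| = \|(\zeta\circ\zeta)\cd u\| \le \|u\|$, using the identity $\|\zeta x\| = \|\zeta\|\|x\|$ for the underlying-space norm.

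The final step is to observe that $f(x) = \sum_k \la \xi_k, \zeta\ra f(x_k) = \la \sum_k f(x_k)\xi_k, \zeta\ra = \la \eta, \zeta\ra = \|\eta\| = \|f_\ii(u)\|$. Therefore
$$
\|f_\ii(u)\| = f(x) \le \|f\|\,\|x\| \le \|f\|\,\|u\|,
$$
which gives $\|f\|_{lb} = \|f_\ii\| \le \|f\|$. Combined with the general inequality $\|f\| \le \|f\|_{lb}$, we conclude $\|f\|_{lb} = \|f\|$, and in particular $f$ is $L$-bounded.

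I do not expect a serious obstacle here; the one point to handle carefully is the choice of $\zeta$ realizing the norm of $\eta = f_\ii(u)$ and the bookkeeping identifying $(\zeta\circ\zeta)\cd u$ with the elementary tensor $\zeta x$. The mild subtlety is that when $f$ is a functional the target amplification $H\co$ collapses to $H$ via the canonical identification mentioned in the text, so that $f_\ii$ genuinely lands in a Hilbert space and the ``test against a unit vector'' argument is available — this is exactly where the special role of functionals (as opposed to general $L$-bounded operators) enters. Everything else is a routine application of the contractivity of the module $HE$ against the norm-one projection $\zeta \circ \zeta$.
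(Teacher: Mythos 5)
Your argument is correct and is essentially the paper's own proof: the paper likewise tests $f_\ii(u)$ against a unit vector $\xi$ realizing its norm and applies module contractivity to the rank-one, norm-one operator $\eta\circ\xi$ (your $\zeta\circ\zeta$ is the special case $\eta=\xi=\zeta$), reducing $u$ to an elementary tensor $\eta x_\xi$ with $\|x_\xi\|\le\|u\|$ and then invoking boundedness of $f$ on the underlying space. No gaps.
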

\begin{proof}
 Clearly, it is sufficient to show that for every $u\in HE$ we have $\|f_\ii(u)\|\le\|f\|\|u\|$.
Recall that $\|f_\ii(u)\|=\max\{|\la f_\ii(u),\xi\ra|; \xi\in H, \|\xi\|=1\}$. Presenting $u$ as a 
sum of elementary tensors, we see that, for every $\eta\in H;\|\eta\|=1$ we have $\la 
f_\ii(u),\xi\ra\eta=f_\ii[(\eta\circ\xi)\cd u]$ and also $(\eta\circ\xi)\cd u=\eta x_\xi$ for some 
$x_\xi\in E$. It follows that $\|x_\xi\|=\|(\eta\circ\xi)\cd u\|\le\|u\|$, hence $|\la 
f_\ii(u),\xi\ra|=\|f_\ii[(\eta\circ\xi)\cd u]\|=\|f_\ii(\eta x_\xi)\|=|f(x_\xi)|\le\|f\|\|u\|$. 
\end{proof}
Thus for every $PL$--space $E$ and $u\in HE$ we have $\|u\|\ge\sup\{\|f_\ii(u)\|\}$, where supremum 
is taken over all $f\in E^*; \|f\|\le1$. But such a supremum is exactly $\|u\|_{\min}$. This 
justifies  the word ``minimal'' in Example \ref{ex1}.

%
%
%
A $PL$--space is called {\it complete} (or Banach), if its underlying normed space is complete. As 
in the ``classical'' context, for every $PL$--space $E$ there exists its {\it completion}, which is 
defined as a pair $(\overline{E},i:E\to\overline{E})$, consisting of a complete $PL$--space and an 
$L$--isometric operator, such that the same pair, considered for respective underlying spaces and 
operators, is the ``classical'' completion of $E$ as of a normed space. The proof of the respective 
existence theorem repeats, with obvious modifications, the simple argument given in~\cite[Chapter 
4]{heb2} for quantum spaces. We only recall that the norm on $H\overline{E}$ is introduced with the 
help of the natural embedding of $H\overline{E}$ into $\overline{HE}$, the ``classical'' completion 
of $HE$. This embedding is well defined by taking an elementary tensor $\xi x;\xi\in H, 
x\in\overline{E}$ to $\lim_{n\to\ii}\xi x_n$, where $x_n\in E$ converges to $x$; hence $\xi x_n$ 
can be considered as a converging sequence in $\overline{HE}$. (Here, of course, we identify $E$ 
with a subspace of $\overline{E}$ and $HE$ with a subspace of $\overline{HE}$.) 

 It is easy to observe that the characteristic universal property of the ``classical'' completion has its proto--Lambert version. Namely, if $(\overline{E}, i)$ is the
  completion of a $PL$--space $E, \q F$ a $PL$--space and $\va: E\to F$ is an $L$--bounded operator, 
  then there exists a unique $L$--bounded operator
  $\overline{\va}:\overline{E}\to\overline{F}$, which is, in obvious sense, the {\it continuous extension} 
  of $\va$. Moreover, we have $\|\overline{\va}\|_{lb}=\|{\va}\|_{lb}$.

Distinguish the following useful fact. Its proof is the same, up to obvious modifications, as of 
Proposition 4.8 in~\cite{heb2}. 

\begin{proposition} \label{context}
Let $\va:E\to F$ be an $L$--isometric isomorhism between $PL$--spaces. Then its continuous 
extension $\overline{\va}:\overline{E}\to\overline{F}$ is also an $L$--isometric isomorhism. 
\end{proposition}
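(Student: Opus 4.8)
The plan is to leverage the already-established universal property of the completion together with the fact that, for an $L$-isometric isomorphism, the inverse map is also $L$-isometric. First I would record that $\va:E\to F$ being an $L$-isometric isomorphism means $\va_\ii:HE\to HF$ is an isometric isomorphism of normed (indeed $\bb$-) modules; hence $\va^{-1}:F\to E$ exists as a linear map and $(\va^{-1})_\ii=(\va_\ii)^{-1}$ is again an isometric isomorphism, so $\va^{-1}$ is itself $L$-isometric, in particular $L$-bounded. Thus both $\va$ and $\va^{-1}$ fall under the hypothesis of the proto-Lambert universal property of the completion recalled just above.

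Next I would invoke that universal property twice: applied to $\va:E\to F$ it produces the continuous extension $\overline{\va}:\overline{E}\to\overline{F}$ with $\|\overline{\va}\|_{lb}=\|\va\|_{lb}=1$; applied to $\va^{-1}:F\to E$ it produces a continuous extension $\psi:\overline{F}\to\overline{E}$ with $\|\psi\|_{lb}=\|\va^{-1}\|_{lb}=1$. The key step is then to identify $\psi$ with $\overline{\va}^{-1}$. For this I would consider the composite $\psi\circ\overline{\va}:\overline{E}\to\overline{E}$. On the dense subspace $E\subset\overline{E}$ this composite agrees with $i_E$ (the embedding of $E$ into $\overline{E}$), because there it equals $\va^{-1}\circ\va$ up to the identifications; by continuity — both $\psi\circ\overline{\va}$ and $\mathrm{id}_{\overline{E}}$ are continuous and agree on a dense set — we get $\psi\circ\overline{\va}=\mathrm{id}_{\overline{E}}$. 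Symmetrically, $\overline{\va}\circ\psi=\mathrm{id}_{\overline{F}}$. Hence $\overline{\va}$ is a bijection with $\overline{\va}^{-1}=\psi$.

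Finally I would assemble the isometry claim. Since $\|\overline{\va}\|_{lb}\le 1$ and $\|\overline{\va}^{-1}\|_{lb}=\|\psi\|_{lb}\le 1$, for every $u\in H\overline{E}$ we have $\|u\| = \|\overline{\va}^{-1}(\overline{\va}_\ii(u))\|\le\|\overline{\va}_\ii(u)\|\le\|u\|$, forcing equality, so $\overline{\va}_\ii$ is isometric; together with bijectivity of $\overline{\va}_\ii$ (which follows from that of $\overline{\va}$ and of $\psi_\ii$, since $(\overline{\va}^{-1})_\ii=\overline{\va}_\ii^{-1}$ on the relevant spaces) this says precisely that $\overline{\va}$ is an $L$-isometric isomorphism.

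The only genuinely delicate point — and the place I would be most careful — is the passage from "agrees on the dense subspace" to "agrees everywhere" at the level of the amplifications: one must check that the density of $HE$ in $\overline{HE}$ (equivalently, of the image of $H\overline{E}\cap\,$(finite tensors over $E$)) is enough, and that the natural embedding $H\overline{E}\hookrightarrow\overline{HE}$ described in the excerpt is compatible with all the maps $\overline{\va}_\ii$, $\psi_\ii$. Since the excerpt explicitly tells us this embedding is the one used to define the norm on $H\overline{E}$ and that $\overline{\va}$ is built so that $\overline{\va}_\ii$ extends $\va_\ii$ continuously, this compatibility is essentially automatic; I expect no real obstacle, only bookkeeping, which is exactly why the statement can be dispatched by citing the analogue (Proposition 4.8) from~\cite{heb2}.
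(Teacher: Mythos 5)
Your proof is correct and is precisely the standard argument that the paper invokes by reference (it cites Proposition 4.8 of~\cite{heb2} rather than writing the proof out): extend both $\va$ and $\va^{-1}$ to the completions, check by density that the two extensions are mutually inverse, and deduce isometry of $\overline{\va}_\ii$ from the fact that it and its inverse are both contractive. One small simplification: the ``delicate point'' you flag at the end is vacuous, since amplification is functorial --- once $\psi\circ\overline{\va}=\mathrm{id}_{\overline{E}}$ is established at the level of the operators themselves, the identity $\psi_\ii\circ\overline{\va}_\ii=\mathrm{id}_{H\overline{E}}$ follows formally, with no further density argument on the amplified spaces needed.
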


\medskip
We pass to bilinear operators. By virtue of Riesz/Fisher Theorem, we can arbitrarily choose a 
unitary isomorphism $\iota:H\widehat\ot_{hil} H\to H$ and fix it throughout the whole paper. 
Following~\cite{he2}, for $\xi,\eta\in H$ we denote the vector $\iota(\xi\ot\eta)\in H$ by 
$\xi\di\eta$, and for $a,b\in\bb$ we denote the operator $ \iota(a\hil b)\iota^{-1}$ on $H$ by 
$a\di b$; obviously, the latter is well defined by the equality $(a\di b)(\xi\di\eta)= a(\xi)\di 
b(\eta)$. Evidently, we have 
\begin{align} \label{gooddi}
\|\xi\di\eta\|=\|\xi\|\|\eta\| \qq{\rm and}\qq \|a\di b\|=\|a\|\|b\|.
\end{align}
If $E$ is a linear space, $\xi\in H$ and $u\in HE$, we set $\xi\di u:=T_\xi\cd u$, where  
$T_\xi\in\bb$ sends $\eta$ to $\xi\di\eta$. Thus, this version of the operation ` $\di$ ' is well 
defined on elementary tensors by $\xi\di\eta  x:=(\xi\di\eta)x$. Similarly, we introduce 
$u\di\eta\in HE$ by $\xi x\di\eta:=(\xi\di\eta)x$. By (\ref{gooddi}), $T_\xi=\|\xi\|S$, where $S$ 
is an isometry. Therefore, if $E$ is a $PL$--space, we have 
 \begin{align} \label{good2} 
\|\xi\di u\|=\|\xi\|\|u\| \qq {\rm and\q similarly} \qq  \|u\di\eta\|=\|\eta\|\|u\|.
\end{align}

Now let $\rr:E\times F\to G$ be a bilinear operator between linear spaces. Its {\it amplification} 
is the bilinear operator $\rr_\ii:HE\times HF\to HG$, associated with the 4-linear operator 
$H\times E\times H\times F\to HG:(\xi,x,\eta,y)\mt(\xi\di\eta)\rr(x,y)$. In other words, $\rr_\ii$ 
is well defined on elementary tensors by $\rr_\ii(\xi x,\eta y)=(\xi\di\eta)\rr(x,y)$ . 
\begin{definition} \label{boundbil}
A bilinear operator $\rr$ between $PL$-spaces is called {\it $L$-bounded}, respectively, {\it 
$L$-contractive}, if its amplification is (just) bounded, respectively, contractive. We put 
$\|\rr\|_{lb}:=\|\rr_\ii\|$. 
\end{definition}
It is easy to see that an $L$-bounded bilinear operator, being considered between respective 
underlying (semi-)normed spaces is just bounded, and $\|\rr\|\le\|\rr\|_{lb}$. On the other hand, 
similarly to linear operators, sometimes the ``classical'' boundedness automatically implies the 
$L$--boundedness. 

\begin{proposition} \label{autcombil}
 Let $E, F$ be $PL$-spaces, $f:E\to\co$, and $g:F\to\co$ bounded functionals.
 Then the bilinear functional $f\times g:E\times F\to\co:(x,y)\mt f(x)g(y)$ is $L$--bounded 
 and $\|f\times g\|_{lb}=\|f\|\|g\|$.
 \end{proposition}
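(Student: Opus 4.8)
The plan is to reduce the bilinear statement to the already-proven linear one (Proposition \ref{autc}) by carefully analyzing the amplification $(f\times g)_\ii$. First I would write out what $(f\times g)_\ii$ does on elementary tensors: by definition $(f\times g)_\ii(\xi x,\eta y)=(\xi\di\eta)(f\times g)(x,y)=(\xi\di\eta)f(x)g(y)=f(x)g(y)(\xi\di\eta)\in H\co=H$. So for general $u=\sum_i\xi_i x_i\in HE$ and $v=\sum_j\eta_j y_j\in HF$ we get $(f\times g)_\ii(u,v)=\sum_{i,j}f(x_i)g(y_j)(\xi_i\di\eta_j)$. The key observation is that this vector factors through the maps $f_\ii$ and $g_\ii$: indeed $f_\ii(u)=\sum_i f(x_i)\xi_i\in H$ and $g_\ii(v)=\sum_j g(y_j)\eta_j\in H$, and then $(f\times g)_\ii(u,v)=\iota\bigl(f_\ii(u)\ot g_\ii(v)\bigr)=f_\ii(u)\di g_\ii(v)$.

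With that identity in hand, the upper bound is immediate: using \eqref{gooddi} (the isometry property $\|\xi\di\eta\|=\|\xi\|\|\eta\|$) together with Proposition \ref{autc}, we get
$$
\|(f\times g)_\ii(u,v)\|=\|f_\ii(u)\di g_\ii(v)\|=\|f_\ii(u)\|\,\|g_\ii(v)\|\le\|f\|\|u\|\cdot\|g\|\|v\|,
$$
so $f\times g$ is $L$-bounded with $\|f\times g\|_{lb}\le\|f\|\|g\|$. For the reverse inequality, I would exploit that $f\times g$ is already bounded as a bilinear functional between the underlying normed spaces with norm exactly $\|f\|\|g\|$ (a standard fact), and that $\|\rr\|\le\|\rr\|_{lb}$ always; alternatively, and more self-containedly, pick unit vectors $\xi,\eta\in H$ and $x\in E$, $y\in F$ with $|f(x)|$, $|g(y)|$ close to $\|f\|$, $\|g\|$ and $\|x\|=\|y\|=1$, then $\|\xi x\|=\|\eta y\|=1$ while $\|(f\times g)_\ii(\xi x,\eta y)\|=|f(x)|\,|g(y)|\,\|\xi\di\eta\|=|f(x)||g(y)|$, which approaches $\|f\|\|g\|$. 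Combining the two bounds gives $\|f\times g\|_{lb}=\|f\|\|g\|$.

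I expect no serious obstacle here; the only point requiring a little care is verifying the factorization $(f\times g)_\ii(u,v)=f_\ii(u)\di g_\ii(v)$, which is a matter of checking it on elementary tensors and invoking bilinearity/linearity, using the definition of $\xi\di u$ and $u\di\eta$ as $T_\xi\cd u$ and the compatibility of $\iota$ with the scalar identification $H\co=H$. Once that bookkeeping is done, everything reduces to \eqref{gooddi} and Proposition \ref{autc}.
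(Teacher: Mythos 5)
Your proof is correct and matches the paper's argument essentially verbatim: the paper also rests on the identity $(f\times g)_\ii(u,v)=f_\ii(u)\di g_\ii(v)$ (which it calls ``the obvious formula''), combined with Proposition \ref{autc} and (\ref{gooddi}) for the upper bound, and the standard facts $\|f\times g\|=\|f\|\|g\|$ and $\|\rr\|\le\|\rr\|_{lb}$ for the lower bound. Your extra care in verifying the factorization on elementary tensors is exactly the bookkeeping the paper leaves implicit.
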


\begin{proof}
Since $\|f\times g\|=\|f\|\|g\|$, it suffices to show that $\|f\times g\|_{lb}\le\|f\|\|g\|$. 
Indeed, combining the obvious formula $(f\times g)_\ii(u,v)=f_\ii(u)\di g_\ii(v)$, Proposition 
\ref{autc} and (\ref{gooddi}), we have $\|(f\times g)_\ii(u,v)\|\le\|f\|\|g\|\|u\|\|v\|$. 
\end{proof}
\begin{proposition} \label{contrbil} 
    Let $L_p(X):=L_p(X,\co)$ and $L_p(X,E)$ be the $PL$-spaces from Example \ref{ex3}. Then the bilinear 
operator $\rr:L_p(X)\times E\to L_p(X,E)$, taking $(z,x)$ to the $E$-valued function $t\mt z(t)x; 
t\in X$, is $L$-contractive. 
\end{proposition}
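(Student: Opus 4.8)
The plan is to estimate the norm of $\rr_\ii(u,v)$ for arbitrary $u\in H L_p(X)$ and $v\in HE$ directly, by pulling everything over to the $L_p(X,H\,\cdot\,)$ model used in Example~\ref{ex3}. First I would unravel the definitions: under the isometric identifications $H L_p(X)\cong L_p(X,H)$ (here $F=\co$, so $HF=H$) and $H L_p(X,E)\cong L_p(X,HE)$ given by the maps $\al$, an element $u\in HL_p(X)$ becomes a function $t\mapsto \tilde u(t)\in H$ with $\|u\|=(\int_X\|\tilde u(t)\|^p\,d\mu)^{1/p}$, while $v\in HE$ stays a fixed element of $HE$. The key point is to compute the image of $\rr_\ii(u,v)$ under $\al$. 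On elementary tensors $u=\xi z$ ($\xi\in H$, $z\in L_p(X)$) and $v=\eta x$ ($\eta\in H$, $x\in E$) we have $\rr_\ii(\xi z,\eta x)=(\xi\di\eta)\rr(z,x)$, whose $\al$-image is the $HE$-valued function $t\mapsto (\xi\di\eta)\,\bigl(z(t)x\bigr)=z(t)\,\bigl((\xi\di\eta)x\bigr)=z(t)\,\bigl(\xi\di(\eta x)\bigr)$. Since $\tilde u(t)=z(t)\xi$ here, this is exactly $t\mapsto \tilde u(t)\di v$, using the operation `$\di$' between $H$ and $HE$ introduced before Definition~\ref{boundbil}. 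By bilinearity this formula, $[\al\rr_\ii(u,v)](t)=\tilde u(t)\di v$, holds for all $u,v$.

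Granting that identity, the estimate is immediate from (\ref{good2}): for almost every $t$ we have $\|\tilde u(t)\di v\|=\|\tilde u(t)\|\,\|v\|$, hence
$$
\|\rr_\ii(u,v)\|=\|\al\rr_\ii(u,v)\|=\left(\int_X\|\tilde u(t)\di v\|^p\,d\mu(t)\right)^{1/p}
=\|v\|\left(\int_X\|\tilde u(t)\|^p\,d\mu(t)\right)^{1/p}=\|u\|\,\|v\|.
$$
This shows $\|\rr\|_{lb}\le 1$, i.e. $\rr$ is $L$-contractive (in fact $L$-isometric on elementary tensors, but contractivity is all that is claimed).

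\textbf{Main obstacle.} The only genuinely delicate point is justifying the pointwise formula $[\al\rr_\ii(u,v)](t)=\tilde u(t)\di v$ rigorously for general (not elementary) $u$ and $v$, and checking it is compatible with the measurable-function model — i.e. that $\al$ really is the isometry claimed in Example~\ref{ex3} and that the module operation by $\id\di\,\cdot\,$ commutes with evaluation at $t$. This is routine once one writes $u$ and $v$ as finite sums of elementary tensors and uses that both sides are (separately) linear in $u$ and in $v$; the measurability and $L_p$-integrability of $t\mapsto\tilde u(t)\di v$ follow because $\tilde u$ is $L_p$ and $v\di(\,\cdot\,)$, equivalently the bounded operator $T$ with $Tv=\cdots$, acts continuously. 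One should also note that nothing here requires $p\ge 2$: the argument works for all $1\le p<\ii$, consistent with the fact that $L_p(X,E)$ is already a $PL$-space (not merely a Lambert space) for every such $p$. If desired, one can record separately that the same computation gives $\|\rr(z,x)\|=\|z\|\,\|x\|$ at the level of underlying spaces, so no norm is lost.
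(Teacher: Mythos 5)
Your proof is correct and is essentially the paper's own argument: your pointwise identity $[\al\,\rr_\ii(u,v)](t)=\al_0(u)(t)\di v$ is precisely the commutativity of the square the paper draws with the bilinear operator ${\cal S}:L_p(X,H)\times HE\to L_p(X,HE)$, $(\om,v)\mt(t\mt\om(t)\di v)$, and your integral computation is the paper's verification that $\|{\cal S}(\om,v)\|=\|\om\|\|v\|$ via (\ref{good2}). Nothing essential is missing.
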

\begin{proof}
Recall the isometric operator $\al:H(L_p(X,E))\to L_p(X,HE)$ and distinguish its particular case 
$\al_0:H(L_p(X))\to L_p(X,H)$. Also consider the bilinear operator ${\cal S}:L_p(X,H)\times HE\to 
L_p(X,HE)$, taking a pair $(\om,u)$ to the $HE$--valued function $t\mt\om(t)\di u; t\in X$. With 
the help of (\ref{good2}), a routine calculation gives $\|{\cal S}(\om,u)\|=\|\om\|\|u\|$. 

 Now consider the diagram
\[
\xymatrix@C+20pt{H(L_p(X))\times HE \ar[r]^{\rr_\ii}\ar[d]_{\al_0\times\id_{HE}}
& H(L_p(X,E)) \ar[d]^{\al} \\
L_p(X,H)\times HE \ar[r]^{{\cal S}} & L_p(X,HE) },
\]

It is easy to check on elementary tensors in the respective amplifications that it is commutative. 
Therefore, for $w\in H(L_p(X))$ and $u\in HE$ we have 
$$
\|\rr_\ii(w,u)\|=\|\al(\rr_\ii(w,u))\|=\|{\cal S}(\al_0(w),u)\|=\|\al_0(w)\|\|u\|=\|w\|\|u\|. \q
$$
\end{proof}
We shall denote the completion of the $PL$-space $E\ot_p F$ from Example \ref{ex8} by 
$E\widehat\ot_p F$. Clearly, it is the $PL$--quantization of the ``classical'' Banach space 
$E\widehat\ot_p F$. 

\begin{proposition}  \label{pr}
Let $E$ be a normed space, $F$ a $PL$--space, $E\ot_p F$ the resulting $PL$--space. Then the 
canonical bilinear operator $\vartheta:E_{\max}\times F\to E\ot_p F$, considered between the 
respective $PL$--spaces, is $L$--contractive. Moreover, $\widehat\vartheta:E_{\max}\times F\to 
E\widehat\ot_p F$, that is $\vartheta$, considered with $E\widehat\ot_p F$ as its range, is also 
$L$--contractive. 
 \end{proposition}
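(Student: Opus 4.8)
The plan is to unravel the definition of $L$-contractivity for $\vartheta$, i.e.\ to estimate the norm of the amplified bilinear operator $\vartheta_\ii : H E_{\max} \times H F \to H(E\ot_p F)$ on a pair $(u,v)$, and reduce everything, via the isometries built into the definitions of $E_{\max}$ and of the $PL$-quantization of $E\ot_p F$, to a computation inside honest normed spaces where only ordinary cross-norm inequalities are used. Concretely, recall that $H E_{\max}$ carries the norm of $H\ot_p E$, and that by Example~\ref{ex8} the space $H(E\ot F)$ is normed via the isomorphism $\beta: H(E\ot F)\to E\ot_p(HF):\xi(x\ot y)\mt x\ot\xi y$, with the $\bb$-module structure on $E\ot_p(HF)$ coming from the second factor. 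So the first step is to write down, on elementary tensors, the composite $\beta\circ\vartheta_\ii$ and identify it with a concrete bilinear map $(H\ot_p E)\times HF \to E\ot_p(HF)$.

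Tracking elementary tensors: for $\xi x \in HE_{\max}$ (with $x\in E$, $\xi\in H$) and $\eta y\in HF$ (with $y\in F$), we have $\vartheta_\ii(\xi x,\eta y) = (\xi\di\eta)\,\vartheta(x,y) = (\xi\di\eta)(x\ot y)$, and applying $\beta$ gives $x\ot (\xi\di\eta)y$. So $\beta\circ\vartheta_\ii$ is the bilinear map sending $(\xi\ot x,\,\eta y)$ to $x\ot(\xi\di\eta)y$. The point is now to estimate $\|x\ot(\xi\di\eta)y\|_p$ in $E\ot_p(HF)$ by $\|\xi\ot x\|_p\,\|\eta y\|_{HF}$, and then by multilinearity and the definition of the projective norm to get the bound $\|(\beta\circ\vartheta_\ii)(U,v)\|_p \le \|U\|_{\max}\|v\|$ for general $U\in HE_{\max}$, $v\in HF$. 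For a single elementary tensor this is immediate from $\|\xi\di\eta\| = \|\xi\|\|\eta\|$ (equation~(\ref{gooddi})) together with $\|\eta y\| = \|\eta\|\|y\|$ and the cross-norm property of $\ot_p$: indeed $\|x\ot(\xi\di\eta)y\|_p = \|x\|\,\|\xi\|\,\|\eta\|\,\|y\| = \|\xi\ot x\|_p\,\|\eta y\|$. For a general $v = \sum_j \eta_j y_j$ one first reduces to $v$ of the special form $\eta y$ by noting that $p := \eta\circ\eta$ with $\|\eta\|=1$ acts as a projection and $\vartheta_\ii(u,v) = \vartheta_\ii(u, p\cd v)$ is not quite available; instead the clean route is to fix $v\in HF$, use that the map $u\mt (\beta\circ\vartheta_\ii)(u,v)$ is a $\bb$-module morphism $HE_{\max}\to E\ot_p(HF)$ which on $\xi x$ takes value $x\ot(\xi\di\eta)y$ when $v=\eta y$, and to compute its norm by writing $v$ in $HF$ and $u$ in $H\ot_p E$ and expanding. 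Because $\ot_p$ on the target is a genuine projective tensor norm and the estimate holds on elementary tensors, the triangle inequality over any representation $u=\sum_k \xi_k x_k$, $v=\sum_j \eta_j y_j$ yields $\|(\beta\circ\vartheta_\ii)(u,v)\|_p \le \big(\sum_k\|\xi_k\|\|x_k\|\big)\big(\sum_j\|\eta_j\|\|y_j\|\big)$, and taking the infimum over representations gives $\le \|u\|_{\max}\|v\|$; since $\beta$ is isometric onto $E\ot_p(HF)$, this is exactly $\|\vartheta_\ii(u,v)\|\le\|u\|_{\max}\|v\|$, i.e.\ $L$-contractivity of $\vartheta$.

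The ``moreover'' part is then almost free: $E\widehat\ot_p F$ is by definition the completion of the $PL$-space $E\ot_p F$, with the canonical $L$-isometric inclusion $E\ot_p F\hookrightarrow E\widehat\ot_p F$; composing $\vartheta$ with this inclusion does not change the amplified norm (the inclusion is $L$-isometric, so its amplification is isometric), whence $\widehat\vartheta_\ii = (\text{inclusion})_\ii\circ\vartheta_\ii$ still has norm $\le 1$. The main obstacle, such as it is, is purely bookkeeping: making sure that the $\bb$-module structure used to define the $PL$-norm on $E\ot_p F$ (which sits on the $HF$-factor) is correctly matched with the $\di$-operation appearing in the amplification of the bilinear operator $\vartheta$, so that the diagram at the level of elementary tensors really does commute. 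Once that identification $\beta\circ\vartheta_\ii(\xi x,\eta y) = x\ot(\xi\di\eta)y$ is pinned down, the norm estimate is just the cross-norm property of $\ot_p$ combined with~(\ref{gooddi}), and no genuinely Lambert-specific input (beyond these definitions) is needed.
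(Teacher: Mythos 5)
There is a genuine gap in the final norm estimate. After correctly identifying $\beta\circ\vartheta_\ii(\xi x,\eta y)=x\ot(\xi\di\eta)y$, you bound a general pair $(u,v)$ by expanding \emph{both} arguments into elementary tensors and claim that taking the infimum of $\bigl(\sum_k\|\xi_k\|\|x_k\|\bigr)\bigl(\sum_j\|\eta_j\|\|y_j\|\bigr)$ over representations yields $\|u\|_{\max}\|v\|$. For $u$ this is legitimate, since $HE_{\max}$ carries by definition the norm of $H\ot_p E$. But for $v$ the infimum of $\sum_j\|\eta_j\|\|y_j\|$ over representations is the norm of $v$ in $H\ot_p F$, i.e.\ the \emph{maximal} $PL$-norm of $F$ evaluated at $v$, which in general strictly dominates the given $PL$-norm of $v$ in $HF$ (take $F=F_{\min}$ for a drastic example). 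So the argument as written only proves $L$-contractivity of $\vartheta:E_{\max}\times F_{\max}\to E\ot_p F$, not the stated result for an arbitrary $PL$-space $F$. Symptomatically, you invoke only (\ref{gooddi}) and never the $PL$-structure of $F$ (the contractivity of the $\bb$-module $HF$); yet some such input is indispensable. You do sense the difficulty in the middle of the paragraph (``is not quite available''), but the route you then commit to is the flawed double expansion.

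The repair is to keep $v$ whole and decompose only $u$. For elementary $\xi x$ one has $\beta(\vartheta_\ii(\xi x,v))=x\ot(\xi\di v)$ for \emph{every} $v\in HF$, and by (\ref{good2}) --- not merely (\ref{gooddi}) --- one gets $\|\xi\di v\|=\|\xi\|\|v\|$ in the $PL$-norm of $HF$, because $T_\xi$ is $\|\xi\|$ times an isometry and $HF$ is a contractive module. The cross-norm property of $E\ot_p(HF)$ then gives $\|x\ot(\xi\di v)\|\le\|x\|\,\|\xi\|\,\|v\|$; summing over a representation of $u$ and taking the infimum over representations of $u$ alone yields $\|\vartheta_\ii(u,v)\|\le\|u\|_{\max}\|v\|$. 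This is exactly the paper's argument, packaged there as the contractive trilinear operator ${\cal T}:E\times H\times HF\to E\ot_p(HF):(x,\xi,v)\mt x\ot(\xi\di v)$, linearized in the first two variables to the bilinear ${\cal S}$ on $(E\ot_p H)\times HF$ and composed with the flip $\beta_0$. Your treatment of the ``moreover'' part (composing with the $L$-isometric embedding into the completion) is fine.
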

 \begin{proof}
Consider the trilinear operator ${\cal T}:E\times H\times HF\to E\ot_p(HF):(x,\xi,v)\mt x\ot(\xi\di 
v)$. It follows from (\ref{good2}) that 
${\cal T}$ is contractive. Therefore the bilinear operator 
 ${\cal S}:(E\ot_p H)\times HF\to E\ot_p HF:(x\ot\xi,v)\mt x\ot(\xi\di v)$,
 is also contractive.

 Recall the isometric operator $\beta:H(E\ot_p F)\to E\ot_p HF$ from Example \ref{ex8} and
 distinguish its particular case, the ``flip'' $\beta_0:HE_{\max}\to E\ot_p H$. Consider the diagram
\[
\xymatrix@C+20pt{HE_{\max}\times HF \ar[r]^{\vartheta_\ii}\ar[d]_{\beta_0\times\id_{HF}}
& H(E\ot_p F) \ar[d]^{\beta} \\
(E\ot_p H)\times HF \ar[r]^{{\cal S}} & E\ot_p HF },
\]
\noindent which is obviously commutative. Therefore a routine calculation shows that for $w\in 
HE_{\max}$ and $v\in HF$ we have $\|\vartheta_\ii(w,v)\|\le\|w\|\|u\|$, and we are done. 
\end{proof}
\section{Proto-Lambert tensor product}
We proceed to show that $L$-bounded bilinear operators between $PL$-spaces can be linearized with 
the help of a specific tensor product `` ${\ot_{pl}}$ '', which seems to be new. But before, since 
in this paper we shall come across several varieties of a tensor product, it is convenient to give 
a general definition, embracing all particular cases. 

Let us fix, throughout this section, two {\it arbitrary chosen} $PL$--spaces $E$ and $F$. Further, 
let 
$\mho$ be a subclass of the class of all normed $PL$-spaces. 

\begin{definition} \label{deftp} 
 A pair $(\Theta,\theta)$ that consists of $\Theta\in\mho$ and an
$L$-contractive bilinear operator $\theta:E\times F\to\Theta$ is called {\it tensor product of $E$ 
and $F$ relative to $\mho$} if, for every $G\in\mho$ and every $L$--bounded bilinear operator 
$\rr:E\times F\to G$, there exists a unique $L$--bounded operator $R:\Theta\to G$ such that the 
diagram 
\[
\xymatrix@R-10pt@C+15pt{
E\times F \ar[d]^{\theta} \ar[dr]^{\rr} & \\
\Theta \ar[r]^R  &  G  } 
\]
\noindent is commutative, and moreover $\|R\|_{lb}=\|\rr\|_{lb}$. 
\end{definition}
Such a pair is unique in the following sense: if $(\Theta_k,\theta_k); k=1,2$ are two pairs, 
satisfying the given definition for a certain $\mho$, then there is a $L$--isometric isomorphism 
$I:\Theta_1\to\Theta_2$, such that $I\theta_1=\theta_2$. This fact is a particular case of a 
general--categorical observation concerning the uniqueness of an initial object in a category; cf., 
e.g.,~\cite{mcl},~\cite[Theorem 2.73]{heb3}. However, the question about the existence of such a 
pair depends on our luck with the choice of the class $\mho$. 
%
\begin{definition} \label{deftppl}
The tensor product of $E$ and $F$ relative to the class of all normed $PL$--spaces is called {\it 
non-completed $PL$--tensor product} of our spaces. 
\end{definition}

\medskip
We shall prove the existence of such a pair, displaying its explicit construction. 

First, we need a sort of ``extended''  version of the diamond multiplication, this time between 
elements of amplifications of linear spaces. Namely, for $u\in HE, v\in HF$ we consider the element 
$u\di v:= \vartheta_\ii(u,v)\in H(E\ot F)$, where $\vartheta:E\times F\to E\ot F$ is the canonical 
bilinear operator. In other words,  this ``diamond operation'' is well defined by $\xi x\di\eta 
y:=(\xi\di\eta)(x\ot y).$ 
\begin{proposition} \label{presU} 
Every $U\in H(E\ot F)$ can be represented as $\sum_{k=1}^na_k\cd(u_k\di v_k)$ for some natural $n$ 
and $a_k\in\bb, u_k\in HE,v_k\in HF, k=1,...,n$. 
\end{proposition}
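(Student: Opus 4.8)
The plan is to reduce the statement to a purely linear-algebraic fact about the tensor product $H \ot E \ot F$, making no use of the norms at all, since the claim is a statement about arbitrary elements of the algebraic amplification $H(E\ot F)$. First I would fix a general element $U \in H(E\ot F)$ and write it in the form $U = \sum_{i=1}^m \zeta_i \ot w_i$ with $\zeta_i \in H$ and $w_i \in E\ot F$, and then expand each $w_i = \sum_j x_{ij}\ot y_{ij}$, so that after collecting terms $U = \sum_{k} \zeta_k \ot (x_k \ot y_k)$ for finitely many $\zeta_k \in H$, $x_k \in E$, $y_k \in F$. Thus it suffices to handle a single ``triple elementary tensor'' $\zeta \ot x \ot y = \zeta x \di \ ?$ — except the subtlety is that $\di$ combines \emph{two} copies of $H$, so I cannot directly write $\zeta x \ot y$ as $u \di v$ on the nose; I need to split $\zeta$ through $\iota$.

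The key step is the following: since $\iota : H \widehat\ot_{hil} H \to H$ is a fixed unitary isomorphism, every $\zeta \in H$ lies in the image, so I can pick $\xi, \eta \in H$ (for instance $\xi = \eta$ a suitable scalar multiple of a single decomposition, or more honestly a finite sum $\zeta = \sum_s \xi_s \di \eta_s$) with $\zeta = \xi \di \eta$ when $\zeta$ is in the ``elementary'' part of the image — but a cleaner route is to fix once and for all a unit vector $e \in H$ and note that the operator $T_e \in \bb$ sending $\vartheta$ to $e \di \vartheta$ is an isometry with range, say, $H_0 \subseteq H$; then for any $\zeta$ in $H_0$ we have $\zeta = e \di \eta$ for a unique $\eta$, and for general $\zeta \in H$ I use that there is $a \in \bb$ with $a(e\di\eta_0) = \zeta$ for a suitable fixed reference vector, i.e. I realize $\zeta x = a \cd (e\di\eta\, x)$ — and the point of Proposition~\ref{presU} is precisely that the outer $\bb$-action ``$a\cd$'' is allowed in the representation, so I have room to maneuver. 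Concretely: writing $U = \sum_k \zeta_k (x_k \ot y_k)$, choose any fixed unit $\xi_0 \in H$, pick $a_k \in \bb$ with $a_k(\xi_0 \di \xi_0) = \zeta_k$ (possible since $\xi_0\di\xi_0 \ne 0$ and $\bb$ acts transitively on nonzero vectors up to scaling — indeed for any nonzero $p,q\in H$ there is $a\in\bb$ with $ap=q$), and then observe
\[
a_k \cd (\xi_0 x_k \di \xi_0 y_k) = a_k \cd \big((\xi_0\di\xi_0)(x_k\ot y_k)\big) = \big(a_k(\xi_0\di\xi_0)\big)(x_k\ot y_k) = \zeta_k (x_k \ot y_k),
\]
using the definition $\xi x \di \eta y = (\xi\di\eta)(x\ot y)$ and the module action $a\cd(\vartheta w) = a(\vartheta)\, w$. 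Summing over $k$ gives $U = \sum_k a_k \cd (u_k \di v_k)$ with $u_k = \xi_0 x_k \in HE$ and $v_k = \xi_0 y_k \in HF$, exactly as required.

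The main obstacle, such as it is, is bookkeeping: being careful that the ``diamond'' on amplifications is genuinely the 4-linear gadget $\vartheta_\ii$ from the paragraph preceding the proposition, so that $\xi x \di \eta y = (\xi\di\eta)(x\ot y)$ literally holds, and that the module identity $a\cd\big((\xi\di\eta)(x\ot y)\big) = (a(\xi\di\eta))(x\ot y)$ is just the definition $a\cd(\zeta w) = a(\zeta)w$ applied with $\zeta = \xi\di\eta$, $w = x\ot y$. The only genuine ingredient beyond linear algebra is the surjectivity of the $\bb$-action on $H\setminus\{0\}$ (pick an orthonormal basis, send basis vectors appropriately), which is elementary; and one should remark that the $a_k$ need not be bounded-normalized — boundedness of each $a_k$ is automatic since each is a bounded operator on $H$ by construction — so no norm estimates enter. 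I would close by noting that the representation is of course far from unique, which is why the statement only asserts existence.
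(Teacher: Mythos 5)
Your argument is correct and is essentially the paper's own proof: reduce to a single elementary tensor $\zeta(x\ot y)$, use that for any nonzero $p,q\in H$ there is $a\in\bb$ with $a(p)=q$ to write $\zeta=a(\xi_0\di\xi_0)$, and conclude $\zeta(x\ot y)=a\cd(\xi_0 x\di\xi_0 y)$ via the module identity. The paper phrases it with arbitrary nonzero $\eta,\zeta$ in place of your fixed $\xi_0$, but the content is identical.
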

\begin{proof}
Evidently, it suffices to consider the simplest case, when $U=\xi(x\ot y);\xi\in H, x\in E, y\in 
F$. Take arbitrary non-zero $\eta,\zeta\in H$; then we have $\xi=a(\eta\di\zeta)$, for some 
$a\in\bb$. Consequently, $U=a\cd(\eta x\di\zeta y)$. 
\end{proof}
As a corollary, the operator $\bb\ot HE\ot HF\to H(E\ot F)$, associated with the 3-linear operator 
$(a,u,v)\mt a\cd(u\di v)$, is surjective. Thus $H(E\ot F)$ can be endowed with the seminorm of the 
respective quotient space  of $\bb\ot_p HE\ot_p HF$, denoted by $\|\cd\|_{pl}$. 
In other words, we have 
\begin{align} \label{inf}
\|U\|_{pl}:=\inf\{\sum_{k=1}^n\|a_k\|\|u_k\|\|v_k\|\},
\end{align}
where the infimum is taken over all possible representations of $U$ as indicated in Proposition 
\ref{presU}. 
\begin{proposition} \label{contrpl}
The seminormed $\bb$--module $(H(E\ot F),\|\cd\|_{pl})$ is contractive. 
\end{proposition}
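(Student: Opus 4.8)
The plan is to verify the contractivity estimate $\|a\cd U\|_{pl}\le\|a\|\,\|U\|_{pl}$ directly from the infimum formula~(\ref{inf}). The key observation is that the outer $\bb$-action interacts well with the generating expressions $b\cd(u\di v)$: if $U=\sum_{k=1}^n b_k\cd(u_k\di v_k)$ is any representation as in Proposition~\ref{presU}, then for $a\in\bb$ we have, by associativity of the module action, $a\cd U=\sum_{k=1}^n (ab_k)\cd(u_k\di v_k)$, which is again a representation of the required form. Hence each admissible representation of $U$ yields an admissible representation of $a\cd U$, with the $k$-th coefficient $b_k$ replaced by $ab_k$.

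From here the estimate is essentially immediate. Applying the definition~(\ref{inf}) to the representation just produced gives
\[
\|a\cd U\|_{pl}\le\sum_{k=1}^n\|ab_k\|\,\|u_k\|\,\|v_k\|\le\|a\|\sum_{k=1}^n\|b_k\|\,\|u_k\|\,\|v_k\|,
\]
using submultiplicativity of the operator norm on $\bb$. Taking the infimum over all representations of $U$ on the right-hand side yields $\|a\cd U\|_{pl}\le\|a\|\,\|U\|_{pl}$, which is exactly the contractivity of the module. It then only remains to recall that $\|\cd\|_{pl}$ is a genuine seminorm on $H(E\ot F)$: this is automatic, since it was introduced as the quotient seminorm of $\bb\ot_p HE\ot_p HF$ under the surjection from Proposition~\ref{presU}, and quotient seminorms of normed (projective tensor) spaces are always seminorms.

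I do not anticipate a real obstacle here; the statement is a routine consequence of how $\|\cd\|_{pl}$ was defined, together with the compatibility of the $\bb$-action with the generators $u\di v$. The only point requiring a moment's care is making sure that multiplying the coefficient $b_k$ on the left by $a$ really does commute past the diamond product, i.e.\ that $a\cd\big(b_k\cd(u_k\di v_k)\big)=(ab_k)\cd(u_k\di v_k)$; but this is just the associativity of the outer multiplication of the $\bb$-module $H(E\ot F)$, which was noted when the amplification $HE$ was first defined as a left $\bb$-module. One might alternatively phrase the whole argument more abstractly: the surjection $\bb\ot_p HE\ot_p HF\to H(E\ot F)$ is a morphism of left $\bb$-modules when the domain carries the $\bb$-action on its first leg, that action is contractive on the projective tensor product, and a quotient of a contractive module by a closed submodule is contractive; but the direct computation above is shorter and self-contained.
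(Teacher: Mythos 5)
Your proof is correct and is essentially the paper's argument: the paper states abstractly that $\bb\ot_p HE\ot_p HF$ is a contractive left $\bb$-module and that $H(E\ot F)$ is its image under a quotient map which is a module morphism, while you unpack exactly that into the explicit computation $a\cd U=\sum_k(ab_k)\cd(u_k\di v_k)$ followed by submultiplicativity and the infimum. The two presentations are the same proof at different levels of abstraction (you even note the abstract phrasing yourself), so there is nothing to add.
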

\begin{proof}
Clearly, $\bb\ot_p HE\ot_p HF$ is a contractive left $\bb$-module as a tensor product of the left 
$\bb$-module $\bb$ and the linear space $HE\ot HF$. 
 Therefore $H(E\ot F)$ is the image of a contractive left $\bb$-module with respect to a quotient map of 
 seminormed spaces. Since the latter map is a module morphism, we  easily obtain the desired property.
 \end{proof}
Thus, $\|\cd\|_{pl}$ is a $PL$--seminorm on $E\ot F$. Denote the respective $PL$--space by 
$E\ot_{pl}F$. 

 Observe the obvious estimate
\begin{align} \label{dim}
\|u\di v\|_{pl}\le\|u\|\|v\|; \q u\in HE,v\in HF.
\end{align}
Since $u\di v=\vartheta_\ii(u,v)$, we see that {\it $\vartheta$, considered with range 
$E\ot_{pl}F$, is $L$-contractive.} 

Looking at the underlying spaces and using (\ref{gooddi}), we easily obtain that 
\begin{align} \label{underl}
\|x\ot y\|\le\|x\|y\|; \q x\in E, y\in F.
\end{align}
(In fact, in (\ref{dim}) and (\ref{underl}) we have the equality, but we shall not discuss it now). 

%
%
\begin{proposition} \label{comdiag}
Let $G$ be a $PL$-space, $\rr:E\times F\to G$ an $L$--bounded bilinear operator, $R:E\ot_{pl}F\to 
G$ the associated linear operator. Then $R$ is $L$--bounded, and   $\|\rr\|_{lb}\|=\|R\|_{lb}$. 
\end{proposition}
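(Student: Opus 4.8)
The plan is to show the two inequalities $\|R\|_{lb}\le\|\rr\|_{lb}$ and $\|\rr\|_{lb}\le\|R\|_{lb}$ separately, the first being the substantive one.

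First I would prove $\|\rr\|_{lb}\le\|R\|_{lb}$. This is the easy direction: since $\rr=R\circ\vartheta$ (with $\vartheta:E\times F\to E\ot_{pl}F$ the canonical bilinear operator), amplifying gives $\rr_\ii=R_\ii\circ\vartheta_\ii$, and $\vartheta$ considered with range $E\ot_{pl}F$ is $L$-contractive by the estimate~(\ref{dim}) already noted in the excerpt. Hence $\|\rr_\ii(u,v)\|=\|R_\ii(\vartheta_\ii(u,v))\|\le\|R\|_{lb}\|u\di v\|_{pl}\le\|R\|_{lb}\|u\|\|v\|$, so $\|\rr\|_{lb}\le\|R\|_{lb}$.

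Next, the main direction $\|R\|_{lb}\le\|\rr\|_{lb}$. Here I would use the explicit description~(\ref{inf}) of the seminorm $\|\cd\|_{pl}$. Take $U\in H(E\ot F)$ and any representation $U=\sum_{k=1}^n a_k\cd(u_k\di v_k)$. Since $R_\ii$ is a $\bb$-module morphism and $R_\ii(u_k\di v_k)=R_\ii(\vartheta_\ii(u_k,v_k))=\rr_\ii(u_k,v_k)$ (this last identity is exactly the amplified form of $R\circ\vartheta=\rr$, checkable on elementary tensors using that $\vartheta_\ii$ and $\rr_\ii$ both send $\xi x\di\eta y$ resp.\ $(\xi x,\eta y)$ to $(\xi\di\eta)(\cdots)$), we get
\[
R_\ii(U)=\sum_{k=1}^n a_k\cd R_\ii(u_k\di v_k)=\sum_{k=1}^n a_k\cd\rr_\ii(u_k,v_k).
\]
Using contractivity of the $\bb$-module $HG$ and $L$-boundedness of $\rr$,
\[
\|R_\ii(U)\|\le\sum_{k=1}^n\|a_k\|\,\|\rr_\ii(u_k,v_k)\|\le\|\rr\|_{lb}\sum_{k=1}^n\|a_k\|\,\|u_k\|\,\|v_k\|.
\]
Taking the infimum over all such representations yields $\|R_\ii(U)\|\le\|\rr\|_{lb}\|U\|_{pl}$, i.e.\ $R$ is $L$-bounded with $\|R\|_{lb}\le\|\rr\|_{lb}$. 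Combining the two inequalities gives $\|R\|_{lb}=\|\rr\|_{lb}$.

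The only mild subtlety, which I expect to be the ``hard'' (really just careful) point, is verifying cleanly that $R_\ii$ is a $\bb$-module morphism and that it intertwines correctly with the diamond operation, so that the displayed computation of $R_\ii(U)$ is legitimate: one must check on elementary tensors that $R_\ii(a\cd w)=a\cd R_\ii(w)$ and that $R_\ii(u\di v)=\rr_\ii(u,v)$, both of which follow from $R(x\ot y)=\rr(x,y)$ together with the definitions of the amplifications. Everything else is a routine estimate. Note also that the infimum in~(\ref{inf}) is over genuine representations guaranteed by Proposition~\ref{presU}, so $R_\ii(U)$ is well defined regardless of the chosen representation (indeed $R$ itself is well defined as the linearization of $\rr$), which is what makes passing to the infimum valid.
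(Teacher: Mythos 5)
Your proposal is correct and follows essentially the same route as the paper: the main inequality $\|R\|_{lb}\le\|\rr\|_{lb}$ is obtained by representing $U$ as in Proposition~\ref{presU}, using that $R_\ii$ is a $\bb$-module morphism with $R_\ii(u\di v)=\rr_\ii(u,v)$, and passing to the infimum in~(\ref{inf}), while the converse inequality follows from~(\ref{dim}). The only difference is that you spell out the easy direction and the well-definedness checks in more detail than the paper does.
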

\begin{proof}
 Take $U\in H(E\ot_{pl}F))$ and represent it according to Proposition \ref{presU}. 
 We remember that
$R_\ii$ is a $\bb$-module morphism. Therefore, using the obvious equality $R_\ii(u\di 
v)=\rr_\ii(u,v)$, we have that 
$R_\ii(U)=\sum_{k=1}^na_k\cd\rr_\ii(u_k, v_k)$, hence 
\[
\|R_\ii(U)\|\le\sum_{k=1}^n\|a_k\|\|\rr_\ii(u_k, v_k)\|\le\|\rr\|_{lb}\sum_{k=1}^n\|a_k\|\|u_k\|\|v_k\|.
\]
From this, using \ref{inf}, we  obtain that $\|R_\ii(U)\|\le\|\rr\|_{lb}\|\|U\|_{pl}$. Thus our $R$ 
is $L$--bounded, and $\|R\|_{lb}\le\|\rr\|_{lb}$. The converse inequality easily follows from 
(\ref{dim}). 
\end{proof}
\begin{proposition} \label{norm}
{\rm (As a matter of fact),} $\|\cd\|_{pl}$ is a norm. 
\end{proposition}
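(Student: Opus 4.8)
The plan is to show that $\|\cdot\|_{pl}$ is nondegenerate on $H(E\ot F)$, i.e. that $\|U\|_{pl}=0$ implies $U=0$. The natural strategy is to exhibit, for each nonzero $U$, an $L$-bounded bilinear functional (or a collection of them) on $E\times F$ whose amplification does not vanish on $U$; since the associated operator $R$ satisfies $\|R\|_{lb}=\|\rr\|_{lb}<\infty$ by Proposition \ref{comdiag}, and $R_\ii(U)\ne 0$, the estimate $\|R_\ii(U)\|\le\|\rr\|_{lb}\|U\|_{pl}$ forces $\|U\|_{pl}>0$. Concretely, I would take $f\in E^*$ and $g\in F^*$, form $\rr=f\times g$, which is $L$-bounded by Proposition \ref{autcombil}, and use the fact that $(f\times g)_\ii(u,v)=f_\ii(u)\di g_\ii(v)$ together with the identification $\xi x\di\eta y\mapsto(\xi\di\eta)(x\ot y)$.

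First I would reduce to a convenient normal form for $U$. Write $U=\sum_{j=1}^m\xi_j(x_j\ot y_j)$; after a standard linear-algebra cleanup (collecting terms, using bilinearity of $\ot$) one can arrange that the $x_j$ are linearly independent in $E$ and the $y_j$ are linearly independent in $F$, and that the $\xi_j\in H$ are all nonzero — or, even better, arrange a single summand's worth of "separation" as in the proof of Proposition \ref{pro1}. Then choose functionals: pick $f\in E^*$ with $f(x_1)=1$, $f(x_j)=0$ for $j>1$, and $g\in F^*$ with $g(y_1)=1$, $g(y_j)=0$ for $j>1$ (possible since the relevant vectors are finite in number and we may assume the underlying normed spaces separate points — or pass to the quotient by the kernel of the seminorm, noting $E\ot_{pl}F$ has an underlying normed, not merely seminormed, space by \eqref{underl} combined with the fact that $E$, $F$ are normed). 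With these choices $R_\ii(U)=(f\times g)_\ii(\,\cdot\,)$ evaluated on $U$ collapses to $f_\ii(\xi_1 x_1)\di g_\ii(\xi_1 y_1)=\xi_1\di\xi_1\ne 0$ by \eqref{gooddi}.

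A cleaner variant, which I would actually prefer to write, avoids choosing $f$ and $g$ with interpolation conditions and instead invokes the minimal quantization directly. Since $\|\cdot\|_{pl}\ge\|\cdot\|_{\min}$ would follow once we know $E\ot_{pl}F$ dominates the minimal $PL$-norm on its underlying space $E\ot_p F$ — but this requires care, because the underlying space of $E\ot_{pl}F$ is a priori only seminormed and we are trying to prove it is normed. So I think the functional-separation argument above is the honest route: the key inputs are Proposition \ref{autcombil} (bilinear functionals are $L$-bounded), Proposition \ref{comdiag} (the universal factorization with norm equality, giving the crucial inequality $\|R_\ii(U)\|\le\|\rr\|_{lb}\|U\|_{pl}$), and the computation $\|\xi_1\di\xi_1\|=\|\xi_1\|^2>0$ from \eqref{gooddi}.

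The main obstacle is the linear-algebra normalization of $U$: one must be sure that an arbitrary element $\sum\xi_j(x_j\ot y_j)$ of $H(E\ot F)$ can be rewritten so that a single pair $(f,g)$ of functionals detects it, i.e. so that the "diagonal" contribution $\sum_j f(x_j)g(y_j)\,\xi_j\ne 0$ for some choice of $f,g$. The slick way is: represent $U$ using a minimal number of elementary tensors in $E\ot F$, say $U=\sum_{k=1}^r w_k\ot z_k$ viewed in $H(E\ot F)$ after absorbing the $H$-components — more precisely use that $H(E\ot F)\cong (HE)\ot F\cong E\ot(HF)$ and pick a representation with $\{z_k\}\subset F$ linearly independent and the corresponding elements of $HE$ linearly independent; nonvanishing of $U$ then gives a nonzero element $u_1\in HE$, and one finishes by applying a bounded functional $g$ on $F$ dual to $z_1$ and then Proposition \ref{autc} (functionals on $E$ are $L$-bounded) to separate $u_1$. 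This is routine but is the only place where genuine thought is needed; everything else is assembling earlier results.
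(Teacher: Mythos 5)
Your argument is correct in substance and rests on the same two pillars as the paper's proof: Proposition \ref{autcombil} (the bilinear functional $f\times g$ is $L$--bounded) and Proposition \ref{comdiag} (the factorization $R$ satisfies $\|R_\ii(U)\|\le\|\rr\|_{lb}\|U\|_{pl}$), so that a single well-chosen $f\times g$ detects a given nonzero $U$. Where you diverge is the reduction step. The paper first invokes Proposition \ref{pro1}: since $\|x\ot y\|\le\|x\|\|y\|$ shows the underlying space of $E\ot_{pl}F$ is (semi)normed by a genuine norm once elementary tensors are handled, it suffices to treat $U=\xi w$ with $\|\xi\|=1$ and $0\ne w\in E\ot F$, and then one only needs the classical fact that $E^*\ot F^*$ separates the points of $E\ot F$. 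You instead attack a general $U\in H(E\ot F)$ directly, which forces the multilinear-algebra normalization you flag as ``the main obstacle.'' Be aware that the normalization proposed in your second paragraph --- a representation $\sum_j\xi_j(x_j\ot y_j)$ with the $x_j$ \emph{and} the $y_j$ simultaneously linearly independent --- does not exist for a general element of a threefold tensor product (e.g. $\xi_1(x\ot y_1)+\xi_2(x\ot y_2)$ with $\xi_1,\xi_2$ and $y_1,y_2$ independent admits no such rewriting, as a marginal-rank count shows). The variant in your final paragraph is the one to keep: take a minimal-rank representation $U=\sum_k u_k\ot z_k$ in $(HE)\ot F$ with both families linearly independent, pick a bounded $g$ dual to $z_1$ among the $z_k$ and a bounded $f$ with $f_\ii(u_1)\ne0$; then $(f\ot g)_\ii(U)=f_\ii(u_1)\ne0$ and Proposition \ref{comdiag} forces $\|U\|_{pl}>0$. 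What the paper's route via Proposition \ref{pro1} buys is precisely the elimination of this bookkeeping, since for an elementary tensor $\xi w$ the separation question lives entirely in the twofold product $E\ot F$.
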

\begin{proof}
 By Proposition \ref{pro1}, it is sufficient to show that, for a non-zero elementary tensor $\xi w; 
w\in E\ot_{pl}F, \xi\in H; \|\xi\|=1, w\ne0$ we have $\|\xi w\|_{pl}\ne0$. Since 
  $E$ and $F$ are normed spaces, then, as it is known, there exist bounded functionals 
  $f:E\to\co, g:F\to\co$ such that $(f\ot g)w\ne0$. Now consider in the previous proposition 
  $\rr:=f\times g:E\times F\to\co$. By virtue of Proposition \ref{autcombil}, $\rr$ is $L$--bounded, hence
the operator $(f\ot g)_\ii$ is bounded. At the same time $(f\ot g)_\ii(\xi w)=[(f\ot 
g)(w)]\xi\ne0$, and we are done. 
\end{proof}
Combining Propositions \ref{comdiag} and \ref{norm}, we immediately obtain 
\begin{theorem} \label{exthe}
 {\rm  (Existence theorem).} The pair $(E\ot_{pl}F,\vartheta)$  is a non-completed $PL$-tensor product of $E$ and $F$.
\end{theorem}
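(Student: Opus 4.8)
The plan is to verify directly that the pair $(E\ot_{pl}F,\vartheta)$ satisfies all the requirements of Definition \ref{deftppl}, namely that $E\ot_{pl}F$ is a normed $PL$-space, that $\vartheta$ lands in it as an $L$-contractive bilinear operator, and that the universal property holds with the stated equality of $L$-norms. Nearly everything needed has already been assembled in the preceding propositions; the theorem is essentially a bookkeeping step that collects them.

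First I would recall that $H(E\ot F)$ endowed with $\|\cd\|_{pl}$ is, by Proposition \ref{contrpl}, a contractive seminormed $\bb$-module, so $\|\cd\|_{pl}$ is a $PL$-seminorm on $E\ot F$; by Proposition \ref{norm} it is in fact a norm, hence $E\ot_{pl}F$ is a genuine normed $PL$-space and thus belongs to the class $\mho$ of all normed $PL$-spaces. Next, the estimate (\ref{dim}), $\|u\di v\|_{pl}\le\|u\|\|v\|$, together with the identity $\vartheta_\ii(u,v)=u\di v$, shows that $\vartheta:E\times F\to E\ot_{pl}F$ is $L$-contractive. So the pair $(E\ot_{pl}F,\vartheta)$ is a legitimate candidate of the type demanded by Definition \ref{deftp}.

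Then I would check the universal property. Given any normed $PL$-space $G$ and any $L$-bounded bilinear $\rr:E\times F\to G$, the algebraic universal property of the ordinary tensor product $E\ot F$ yields a unique linear map $R:E\ot F\to G$ with $R\circ\vartheta=\rr$, hence $R\vartheta=\rr$ at the level of $PL$-spaces as well. Uniqueness of $R$ as a linear (a fortiori as an $L$-bounded) map is automatic since $\vartheta(E\times F)$ spans $E\ot F$. That $R$ is actually $L$-bounded with $\|R\|_{lb}=\|\rr\|_{lb}$ is precisely the content of Proposition \ref{comdiag}. Assembling these observations completes the verification.

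I do not expect a genuine obstacle here: the only thing to be careful about is that the class $\mho$ in Definition \ref{deftppl} is taken to be \emph{all} normed $PL$-spaces, so one must make sure both that $E\ot_{pl}F$ itself lies in $\mho$ (guaranteed by Proposition \ref{norm}) and that the target $G$ ranges over that same class (so that Proposition \ref{comdiag}, which is stated for an arbitrary $PL$-space $G$, applies verbatim). Beyond that, the proof is simply the sentence ``combine Propositions \ref{comdiag} and \ref{norm}'', as the paper indicates, and no further computation is required.

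\begin{proof}
By Proposition \ref{norm}, $\|\cd\|_{pl}$ is a norm, so $E\ot_{pl}F$ is a normed $PL$-space, i.e.\ it belongs to the class of all normed $PL$-spaces. By (\ref{dim}) and the identity $\vartheta_\ii(u,v)=u\di v$, the bilinear operator $\vartheta$, considered with range $E\ot_{pl}F$, is $L$-contractive. Now let $G$ be an arbitrary $PL$-space and $\rr:E\times F\to G$ an $L$-bounded bilinear operator. By the algebraic universal property of $E\ot F$ there is a unique linear operator $R:E\ot_{pl}F\to G$ with $R\vartheta=\rr$; its uniqueness among all $L$-bounded operators is clear, since $\vartheta(E\times F)$ linearly spans $E\ot F$. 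Finally, Proposition \ref{comdiag} shows that this $R$ is $L$-bounded and $\|R\|_{lb}=\|\rr\|_{lb}$. Hence $(E\ot_{pl}F,\vartheta)$ satisfies all the conditions of Definition \ref{deftppl}.
\end{proof}
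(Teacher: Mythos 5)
Your proof is correct and follows exactly the paper's route: the paper's own proof is the one-line "Combining Propositions \ref{comdiag} and \ref{norm}, we immediately obtain" the theorem, and you have simply spelled out the bookkeeping (membership of $E\ot_{pl}F$ in the class of normed $PL$-spaces, $L$-contractivity of $\vartheta$ via (\ref{dim}), uniqueness of $R$ from spanning). Nothing is missing.
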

\medskip
We can also speak about the ``completed'' version of Definition \ref{deftppl}. 
%
\begin{definition} \label{defbtp}
The tensor product of $E$ and $F$ relative to the class of all complete $PL$--spaces is called {\it 
completed, or Banach $PL$--tensor product} of our spaces. 
\end{definition}
\begin{proposition} \label{exbtp}
The Banach $PL$--tensor product of $PL$--spaces $E$ and $F$ exists, and it is the pair 
$(E\widehat\ot_{pl}F,\widehat\vartheta)$, where $E\widehat\ot_{pl}F $ is the completion of the 
$PL$--space $E\ot_{pl} F$, and $\widehat\vartheta$ acts as $\vartheta$, but with range 
$E\widehat\ot_{pl} F $. 
\end{proposition}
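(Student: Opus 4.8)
The plan is to piggyback on the already-established non-completed case (Theorem \ref{exthe}) together with the general machinery of completions for $PL$-spaces recorded earlier in the excerpt, so that the only genuinely new thing to check is the universal property against \emph{complete} target spaces. First I would note that $E\widehat\ot_{pl}F$, being by definition the completion of the normed $PL$-space $E\ot_{pl}F$, is a complete $PL$-space, hence lies in the class $\mho$ of complete $PL$-spaces; and $\widehat\vartheta$, being the composition of $\vartheta:E\times F\to E\ot_{pl}F$ with the $L$-isometric embedding $i:E\ot_{pl}F\to E\widehat\ot_{pl}F$, is $L$-contractive because $\vartheta$ is $L$-contractive (by the remark following (\ref{dim})) and $i$ is $L$-isometric. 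So the pair $(E\widehat\ot_{pl}F,\widehat\vartheta)$ is an admissible candidate.

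Next I would verify the universal property. Let $G$ be a complete $PL$-space and $\rr:E\times F\to G$ an $L$-bounded bilinear operator. By Theorem \ref{exthe} there is a unique $L$-bounded linear $R:E\ot_{pl}F\to G$ with $R\vartheta=\rr$ and $\|R\|_{lb}=\|\rr\|_{lb}$. Now invoke the universal property of the completion of a $PL$-space stated in the excerpt (the continuous-extension property): since $G$ is already complete, the canonical map $G\to\overline G$ is an $L$-isometric isomorphism, so $R$ extends uniquely to an $L$-bounded operator $\overline R:E\widehat\ot_{pl}F\to G$ with $\|\overline R\|_{lb}=\|R\|_{lb}=\|\rr\|_{lb}$, and $\overline R\,\widehat\vartheta=\overline R\,i\,\vartheta=R\vartheta=\rr$. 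This gives existence of the required factorization with the correct norm equality.

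Then I would dispatch uniqueness of $\overline R$: any $L$-bounded $S:E\widehat\ot_{pl}F\to G$ with $S\widehat\vartheta=\rr$ satisfies $S\,i\,\vartheta=\rr$, so $S\,i:E\ot_{pl}F\to G$ is an $L$-bounded factorization of $\rr$ through $\vartheta$, hence $S\,i=R$ by the uniqueness clause of Theorem \ref{exthe}; since $i(E\ot_{pl}F)$ is dense in $E\widehat\ot_{pl}F$ and $S$, $\overline R$ are both bounded (being $L$-bounded), continuity forces $S=\overline R$. Finally I would remark that this is exactly the assertion that $(E\widehat\ot_{pl}F,\widehat\vartheta)$ is the tensor product of $E$ and $F$ relative to the class of all complete $PL$-spaces, i.e.\ the Banach $PL$-tensor product, and that uniqueness of the pair up to $L$-isometric isomorphism is the general categorical fact already cited after Definition \ref{deftp}.

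I do not expect a serious obstacle here; the statement is essentially a formal consequence of Theorem \ref{exthe} plus the completion formalism. The one point requiring a little care is making sure the \emph{norm} equality $\|\overline R\|_{lb}=\|\rr\|_{lb}$ survives the extension — but this is precisely guaranteed by the sentence in the excerpt asserting $\|\overline{\va}\|_{lb}=\|\va\|_{lb}$ for continuous extensions — and checking that $\widehat\vartheta$ is $L$-contractive rather than merely $L$-bounded, which follows since composing an $L$-contractive map with an $L$-isometry cannot increase $\|\cdot\|_{lb}$.
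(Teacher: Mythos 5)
Your argument is correct and is exactly what the paper has in mind: the paper's own proof is the single sentence ``This is an immediate corollary of the universal property of the completion,'' and your write-up simply fills in the routine details (admissibility of the candidate pair, factorization through $E\ot_{pl}F$ via Theorem \ref{exthe}, extension by continuity with preservation of $\|\cdot\|_{lb}$, and uniqueness by density). No gaps.
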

\begin{proof}
This is an immediate corollary of the universal property of the completion. 
\end{proof}
\section{Tensoring by maximal $PL$--spaces and by $L_1(\cd)$}
In this section we show that for certain concrete tensor factors their $PL$--tensor product also 
becomes something concrete and transparent. 
\begin{theorem} \label{th2}
 Let  $E$ be a normed space, $F$ a $PL$--space, $E\ot_p F$ the $PL$--space from Example \ref{ex8}. Then
there exists an $L$--isometric isomorphism $I:E_{\max}\ot_{pl}F\to E\ot_p F$, acting as the 
identity operator on the common underlying linear space of our $PL$--spaces. As a corollary {\rm 
(see Proposition \ref{context}),} there exists an $L$--isometric isomorphism $\widehat 
I:E_{\max}\widehat\ot_{pl}F\to E\widehat\ot_p F$, which is the extension by continuity of $I$. 
\end{theorem}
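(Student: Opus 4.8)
The plan is to show that the identity map on the common underlying linear space $E\ot F$ is $L$-isometric when considered from $E_{\max}\ot_{pl}F$ to $E\ot_p F$, i.e.\ that the two $PL$-norms on $H(E\ot F)$ coincide. One inequality is essentially free: Proposition~\ref{pr} tells us that the canonical bilinear operator $\vartheta:E_{\max}\times F\to E\ot_p F$ is $L$-contractive, so by the universal property of $E_{\max}\ot_{pl}F$ (Theorem~\ref{exthe}, via Proposition~\ref{comdiag}) the associated linear operator $I:E_{\max}\ot_{pl}F\to E\ot_p F$ is $L$-contractive; that is, $\|U\|_p\le\|U\|_{pl}$ for every $U\in H(E\ot F)$, where $\|\cd\|_p$ denotes the $PL$-norm of $E\ot_p F$ (the one transported from $E\ot_p(HF)$ by $\beta$) and $\|\cd\|_{pl}$ the proto-Lambert norm.

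The substance is the reverse inequality $\|U\|_{pl}\le\|U\|_p$. Here I would argue directly from the explicit infimum formula~(\ref{inf}). Fix $U\in H(E\ot F)$ and work inside $E\ot_p(HF)$ via the isometric isomorphism $\beta$ of Example~\ref{ex8}, so $\beta(U)=\sum_{k=1}^m x_k\ot v_k$ with $x_k\in E$, $v_k\in HF$, and $\sum_k\|x_k\|\|v_k\|$ as close as we like to $\|U\|_p$. For each elementary piece $x_k\ot v_k$ I want to realise it as $\beta$ of something of the form $a_k\cd(u_k\di w_k)$ with $\|a_k\|\|u_k\|\|w_k\|$ controlled by $\|x_k\|\|v_k\|$. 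Pick a unit vector $\xi\in H$; then $x_k$ corresponds under the flip $\beta_0$ to $\|x_k\|\,(\xi'\ot\xi)$-type data, more precisely $\xi x_k\in HE_{\max}$ has norm $\|x_k\|$, and one can choose $a_k\in\bb$ with $a_k(\xi\di\xi)=\xi$ and $\|a_k\|=1$ (since $\|\xi\di\xi\|=1=\|\xi\|$). Then $a_k\cd(\xi x_k\di v_k)$ should, after tracking through the definitions of $\vartheta_\ii$, $\beta$, $\beta_0$ and the commuting square in the proof of Proposition~\ref{pr}, equal $\beta^{-1}(x_k\ot v_k)$ up to the identification — this is exactly the kind of ``check on elementary tensors'' that the earlier proofs invoke. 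Summing over $k$ gives a representation of $U$ witnessing $\|U\|_{pl}\le\sum_k\|x_k\|\|v_k\|$, and taking the infimum yields $\|U\|_{pl}\le\|U\|_p$.

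Combining the two inequalities, $I$ is an $L$-isometric isomorphism (it is bijective on the underlying space by construction, hence $\beta\circ I_\ii$ identifies the amplifications isometrically). The corollary about the completed version is then immediate from Proposition~\ref{context}: the continuous extension $\widehat I$ of an $L$-isometric isomorphism is again an $L$-isometric isomorphism, and its range is the completion of $E\ot_p F$, which is $E\widehat\ot_p F$ by the remark preceding Proposition~\ref{pr}, while its domain is $E_{\max}\widehat\ot_{pl}F$ by Proposition~\ref{exbtp}.

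The main obstacle I anticipate is purely bookkeeping: getting the correspondence $a_k\cd(\xi x_k\di v_k)\leftrightarrow x_k\ot v_k$ right through the chain of identifications $\vartheta_\ii$, $\beta$, $\beta_0$, and the diamond operation $\di$ on amplifications — in particular making sure the norm estimate is sharp (i.e.\ that one really can take $\|a_k\|=1$ and not pay an extra factor), which is where the special role of the \emph{maximal} norm on $E$ enters. There is no deep analytic difficulty; the whole point is that $E_{\max}$ is flexible enough to absorb all the ``scalar'' data into the $\bb$-coefficient $a_k$ at no cost, which is precisely the feature singled out in the section's introduction.
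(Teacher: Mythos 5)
Your proposal is correct and follows essentially the same route as the paper: the contractive direction comes from Proposition \ref{pr} together with the universal property, and the reverse inequality comes from turning a representation $\sum_k x_k\ot v_k$ in $E\ot_p(HF)$ into an admissible representation for the infimum formula (\ref{inf}) at no cost in norm, exactly as the paper does with $U=S^*\cd[\sum_k e x_k\di v_k]$ for the isometry $S:\eta\mt e\di\eta$. One small correction to your bookkeeping: the condition $a_k(\xi\di\xi)=\xi$ with $\|a_k\|=1$ does not pin down $a_k$ enough to guarantee $a_k\cd(\xi x_k\di v_k)=\beta^{-1}(x_k\ot v_k)$ for arbitrary $v_k\in HF$ --- you need $a_k$ to invert the isometry $S:\eta\mt\xi\di\eta$ on its whole range, i.e.\ $a_k:=S^*$, which still has norm $1$ and makes the estimate sharp as you intended.
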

\begin{proof}
 Consider $\vartheta$ from Proposition \ref{pr}.  By Proposition \ref{exthe}, $\vartheta$ gives rise to the $L$--contractive operator $I$, acting as in the formulation.
 Therefore it is sufficient to show that the operator $I_\ii$ does not decrease norms of elements.

Take $U\in H(E\ot F)$. Identifying the latter space with $E\ot HF$, we can represent $U$ as 
$\sum_{k=1}^n x_k\ot v_k; x_k\in E, v_k\in HF$. Choose $e\in H;\|e\|=1$  and denote by $S\in\bb$ 
the isometric operator $\xi\mt e\di\xi;\xi\in H$. We easily see that 
$$
U=S^*\cd\left[\sum_{k=1}^nex_k\di v_k\right].
$$
From this, by (\ref{inf}), we obtain the estimate $\|U\|_{pl}\le\sum_{k=1}^n\|x_k\|\|v_k\|$. Hence, 
we have 
\begin{align} \label{aa}
\|U\|_{pl}\le\inf\{\sum_{k=1}^n\|x_k\|\|v_k\|\}.
\end{align}
where the infimum is taken over all representations of $U$ in the indicated form. 

Now look at $I_\ii(U)$. It is the same $\sum_{k=1}^n x_k\ot v_k$, only considered in the normed 
space $E\ot_p HF$. It follows that $\|I_\ii(U)\|$ is exactly the infimum, indicated in (\ref{aa}). 
Thus, $\|I_\ii(U)\|\ge\|U\|_{pl}$ and we are done. 
\end{proof}
\begin{remark}
As an easy corollary of this theorem, we have, up to an $L$--isometric isomorphism, that 
$E_{\max}\widehat\ot_{pl} F_{\max}=[E\widehat\ot_{p} F]_{\max}$ for all normed spaces $E$ and $F$. 
In particular, we have $H_{\max}\widehat\ot_{pl}H_{\max}={\cal N}(H)_{\max}$, where ${\cal N}(H)$ 
is the Banach space of trace class operators on $H$. 
\end{remark}
Now we want to apply this theorem to the description of $PL$--tensor products in the situation, 
when one of tensor factors is $L_1(X)$ from Example \ref{ex3}. As a ``classical'' prototype of that 
description, we recall the following theorem, due to Grothendieck. 

{\it Let $(X,\mu)$ be a measure space, $F$ a Banach space. Then there exists an isometric 
isomorphism ${\cal G}_F:L_1(X)\widehat\ot_p F\to L_1(X,F)$, well defined by taking an elementary 
tensor $z\ot x; z\in L_1(X), x\in F$ to the $F$-valued function $t\mt z(t)x; t\in X$.} 
\begin{theorem} \label{grr}
Let $(X,\mu)$ be a measure space, $F$ a complete $PL$--space. Then there exists an $L$--isometric 
isomorphism ${\cal I}:L_1(X)\widehat\ot_{pl} F\to L_1(X,F)$, well defined in the same way as ${\cal 
G}_F$ in the Grothendieck theorem. 
\end{theorem}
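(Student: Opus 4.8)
The plan is to reduce Theorem~\ref{grr} to the conjunction of Theorem~\ref{th2} and the classical Grothendieck theorem quoted just above, by identifying $L_1(X)$ with (a completion of) a maximal $PL$--space. First I would recall that the underlying normed space of the $PL$--space $L_1(X)$ from Example~\ref{ex3} carries, by Proposition~\ref{autc} and the discussion of minimal norms, a $PL$--structure; but more to the point, I claim that $L_1(X)$ (with $p=1$) is in fact $L_1(X)_{\max}$, i.e.\ its $PL$--norm coincides with $\|\cd\|_{\max}$. To see this one checks that the isometric module morphism $\al_0 : H(L_1(X)) \to L_1(X,H)$ of Example~\ref{ex3} has image with the projective norm: concretely, $L_1(X,H) = L_1(X)\widehat\ot_p H$ isometrically (the classical Grothendieck theorem again, with the roles of the factors swapped, or the elementary fact that $L_1(X,H)=L_1(X)\widehat\ot_p H$), so the norm $\al_0$ transports onto $H(L_1(X))$ is exactly the norm of $H\ot_p L_1(X)$, which by definition is $\|\cd\|_{\max}$. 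Thus $L_1(X) = L_1(X)_{\max}$ as $PL$--spaces.

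With that identification in hand, I would apply Theorem~\ref{th2} with the normed space there taken to be $L_1(X)$ and the $PL$--space there taken to be $F$: it yields an $L$--isometric isomorphism $L_1(X)_{\max}\ot_{pl} F \to L_1(X)\ot_p F$ acting as the identity on the common underlying linear space, and hence, by Proposition~\ref{context} and Proposition~\ref{exbtp}, an $L$--isometric isomorphism $\widehat I : L_1(X)\widehat\ot_{pl} F \to L_1(X)\widehat\ot_p F$ extending it by continuity. It remains to compose $\widehat I$ with a suitable $L$--isometric isomorphism $L_1(X)\widehat\ot_p F \to L_1(X,F)$ realizing $\cal G_F$ at the $PL$ level, and to check that the composite is $\cal G_F$ on elementary tensors, i.e.\ sends $z\ot x$ to $t\mt z(t)x$.

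The main work, therefore, is to upgrade the classical Grothendieck isometry $\cal G_F : L_1(X)\widehat\ot_p F \to L_1(X,F)$ to an \emph{$L$--isometric} isomorphism of $PL$--spaces. For this I would amplify: one must show that $(\cal G_F)_\ii : H(L_1(X)\widehat\ot_p F) \to H(L_1(X,F))$ is isometric. The right--hand amplification is described by the operator $\al : H(L_1(X,F)) \to L_1(X,HF)$ of Example~\ref{ex3} (isometric onto its image, and in the complete case identifying $H(L_1(X,F))$ with $L_1(X,HF)$ after completion), while the left--hand one is described by $\beta : H(L_1(X)\ot_p F) \to L_1(X)\ot_p HF$ of Example~\ref{ex8} followed by the classical Grothendieck isometry $L_1(X)\widehat\ot_p HF \cong L_1(X,HF)$ applied to the $HF$--valued side. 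Chasing an elementary tensor $\xi(z\ot x)$ through both routes gives in each case the $HF$--valued function $t\mt z(t)(\xi x(t)) = (\xi\di\text{(scalar)})$-type expression — more precisely $t\mt z(t)\,\xi\, x(t)$ — so the two normed-space identifications agree, and $(\cal G_F)_\ii$ is isometric. This is the step I expect to be the main obstacle: not because any single verification is deep, but because it requires assembling Example~\ref{ex3}, Example~\ref{ex8}, Proposition~\ref{contrbil}, and two instances of the classical Grothendieck theorem into one commuting diagram and checking commutativity on elementary tensors. Once that diagram commutes, combining it with Theorem~\ref{th2} as above finishes the proof; the universal property bookkeeping (that the composite operator is the one "well defined in the same way as $\cal G_F$") is then automatic from Proposition~\ref{exbtp} and the uniqueness clause following Definition~\ref{deftp}.
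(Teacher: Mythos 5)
Your proposal follows essentially the same route as the paper's own proof: first identifying the $PL$--norm of $L_1(X)$ from Example~\ref{ex3} with the maximal norm via the flip and the classical Grothendieck identification $L_1(X,H)\cong L_1(X)\widehat\ot_p H$, then invoking Theorem~\ref{th2} to get $\widehat I: L_1(X)\widehat\ot_{pl}F\to L_1(X)\widehat\ot_p F$, and finally upgrading ${\cal G}_F$ to an $L$--isometric isomorphism by checking, on elementary tensors, the commutativity of the diagram built from $\al$ (Example~\ref{ex3}), $\beta$ (Example~\ref{ex8}) and the Grothendieck isometry for $HF$--valued functions. The argument is correct and matches the paper's, including the reduction to the dense subspace $L_1(X)\ot_p F$ and extension by continuity via Proposition~\ref{context}.
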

\begin{proof}
First we note that {\it the $PL$-norm on $L_1(X)$, introduced in Example \ref{ex3}, coincides with 
the maximal $PL$-norm, 
 introduced in Example \ref{ex1}}.
 This is because the identity operator $I:H\ot_p L_1(X)\to H(L_1(X))$
 participates in the commutative diagram
\[
\xymatrix@C+20pt{H\ot_p L_1(X) \ar[r]^{I}\ar[d]_{flip}
& H(L_1(X)) \ar[d]^{\al_1} \\
L_1(X)\ot_p H \ar[r]^{{\cal G}_0} & L_1(X,H) },
\]
where ${\cal G}_0$ is the restriction of ${\cal G}_H$, $\al_1$ is a particular case of $\al$ from 
Example \ref{ex3}, and these operators, as well as ``flip'', are isometric. 

\medskip
Combining this with Theorem \ref{th2}, we come to the $L$--isometric isomorphism \\ $\widehat I: 
L_1(X)\widehat\ot_{pl} F\to L_1(X)\widehat\ot_p F$. 

\medskip
Now we show  that {\it the isometric isomorphism ${\cal G}_F:L_1(X)\widehat\ot_p F\to L_1(X,F)$ is 
$L$--isometric with respect to the $PL$--norm in the latter space, taken from Example \ref{ex3}.} 
We see that ${\cal G}_F$ is the extension by continuity (cf. Section 3) of its restriction ${\cal 
G}_F^0$ to $L_1(X)\ot_p F$, and this restriction maps the latter space onto a dense subspace of 
$L_1(X,F)$. Therefore, by virtue of Proposition \ref{context}, it is sufficient to show that the 
operator ${\cal G}_F^0$ is $L$--isometric, or, equivalently, that $({\cal G}_F^0)_\ii$ is 
isometric. But the latter participates in the commutative diagram 
\[
\xymatrix@C+20pt{H(L_1(X)\ot_p F) \ar[r]^{({\cal G}_F^0)_\ii}\ar[d]_{\beta}
& H(L_1(X,F)) \ar[d]^{\al} \\
L_1(X)\ot_p HF \ar[r]^{{\cal G}_{HF}^0} & L_1(X,HF) },
\]
\noindent where $\al$ and (with $L_1(X)$ as $E$) $\beta$ are operators from the Examples \ref{ex3} 
and \ref{ex8}, respectively, and ${\cal G}_{HF}^0$ is the restriction to the respective subspaces 
of the isometric isomorphism, provided by the Grothendieck theorem (this time with the completion 
of $HF$ in the role of $F$). Since $\al,\beta$ and ${\cal G}_{HF}^0$ are isometric, then $({\cal 
G}_0)_\ii$ is also isometric. 

\medskip
After this, to end the proof, it remains to set ${\cal I}:={\cal G}_F\widehat I$. 
\end{proof}

Like in the ``classical'' context of Grothendieck theorem, we can distinguish a transparent 
particular case, concerning integrable functions of two variables. 
\begin{proposition} \label{grot}
Let $(X,\mu), (Y,\nu)$ be two measure spaces, $(X\times Y,\mu\times\nu)$ their product measure 
space. Then $L_1(X)\widehat\ot_{pl}L_1(Y)= L_1(X\times Y)$ up to an $L$--isometric isomorphism. 
More precisely, there exists  an $L$--isometric isomorphism between the indicated $PL$--spaces, 
well defined by taking $x\ot y; x\in L_1(X), y\in L_1(Y)$ to the function $(s,t)\mt x(s)y(t); 
(s,t)\in X\times Y$. 
\end{proposition}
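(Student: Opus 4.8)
The plan is to derive Proposition \ref{grot} as a direct specialization of Theorem \ref{grr}. First I would take the measure space $(X,\mu)$ from Theorem \ref{grr} to be our $(X,\mu)$, and take the complete $PL$-space $F$ to be $L_1(Y)$ (equipped with the $PL$-norm from Example \ref{ex3}, which we observed in the proof of Theorem \ref{grr} coincides with the maximal $PL$-norm). Theorem \ref{grr} then furnishes an $L$-isometric isomorphism
\[
{\cal I}:L_1(X)\widehat\ot_{pl}L_1(Y)\to L_1(X,L_1(Y)),
\]
well defined by sending $z\ot x;z\in L_1(X),x\in L_1(Y)$ to the $L_1(Y)$-valued function $t\mt z(t)x$.

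Next I would invoke the ``classical'' Fubini-type identification $L_1(X,L_1(Y))=L_1(X\times Y)$. The standard isometric isomorphism here, call it ${\cal F}$, sends an $L_1(Y)$-valued function $w$ on $X$ to the function $(s,t)\mt[w(s)](t)$ on $X\times Y$; in particular it sends $t\mt z(s)x$ (as a function of $s$, with values the function $t\mapsto z(s)x(t)$ — careful with the variable names) to $(s,t)\mt z(s)x(t)$. The one point requiring a line of justification is that this Fubini identification is not merely isometric for the underlying normed spaces but $L$-isometric for the relevant $PL$-structures: the $PL$-norm on $L_1(X,L_1(Y))$ comes from Example \ref{ex3} applied with $F=L_1(Y)$, whose own $PL$-norm again comes from Example \ref{ex3} (with $F=\co$), while the $PL$-norm on $L_1(X\times Y)$ comes from Example \ref{ex3} with $F=\co$. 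Unwinding the definition of the map $\al$ from Example \ref{ex3} in each case, one checks that the amplification ${\cal F}_\ii:H(L_1(X,L_1(Y)))\to H(L_1(X\times Y))$ fits into a commuting square whose other three sides are the relevant isometric $\al$-maps together with the classical $L_1(X,L_1(Y,H))=L_1(X\times Y,H)$ Fubini isomorphism; since those three are isometric, so is ${\cal F}_\ii$. This is essentially the same diagram-chasing device already used repeatedly in Examples \ref{ex3}, \ref{ex8} and in the proof of Theorem \ref{grr}, so it is routine.

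Finally I would set the desired isomorphism to be the composite ${\cal F}\circ{\cal I}:L_1(X)\widehat\ot_{pl}L_1(Y)\to L_1(X\times Y)$, which is $L$-isometric as a composite of $L$-isometric isomorphisms, and trace an elementary tensor $x\ot y;x\in L_1(X),y\in L_1(Y)$ through it: ${\cal I}$ sends it to the $L_1(Y)$-valued function $s\mt x(s)y$, and ${\cal F}$ sends that to $(s,t)\mt x(s)y(t)$, exactly the map asserted in the statement. The only genuine obstacle — and it is a mild one — is pinning down the $L$-isometry of the Fubini identification; everything else is bookkeeping and an appeal to Theorem \ref{grr}. One should also remark that $L_1(Y)$ being complete and $(X\times Y,\mu\times\nu)$ being an honest measure space are exactly the hypotheses needed to apply both Theorem \ref{grr} and the classical Fubini identification, so no extra assumptions are incurred.
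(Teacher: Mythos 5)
Your proposal is correct and follows essentially the same route as the paper: invoke Theorem \ref{grr} with $F=L_1(Y)$ and then check that the classical Fubini identification $L_1(X,L_1(Y))\to L_1(X\times Y)$ is $L$--isometric, which the paper also reduces to a routine computation with the $\al$--maps of Example \ref{ex3} and the Fubini theorem. Your commuting-square organization of that last check is just a slightly more explicit version of the paper's ``routine calculation.''
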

\begin{proof}
 By virtue of Theorem \ref{grr}, it is sufficient to show that the ``classical'' isometric isomorphism 
 $I:L_1(X,L_1(Y))\to L_1(X\times Y)$, taking the $L_1(Y)$-valued integrable function $\bar x$ to the 
 function $I(\bar x):(s,t)\mt[\bar x(s)](t)$, is $L$--isometric. Since
every $u\in H(L_1(X,L_1(Y))$ is (finite) sum of elementary tensors, a routine calculation, using 
the construction of the $PL$--norm on relevant $L_p$--spaces and, of course, Fubini Theorem, shows 
that indeed $\|u\|=\|I_\ii(u)\|$. 
\end{proof}
\begin{remark}
The results of this section can lead to the conjecture that something similar, at least in 
formulations, can be said in the context of the so-called proto-quantum spaces in the operator 
space theory (cf.~\cite[Ch. 2]{heb2}). It happened that to some extent it is indeed so, but proofs 
of crucial facts become more complicated. However, we do not discuss it in the present paper. 
\end{remark}
\section{Lambert spaces and the Lambert tensor product}
From now on we concentrate on a special type of $PL$-spaces, which is a non-coordinate form of {\it 
Operatorfolgenr\"aume} of Lambert. 

If $X$ is a left $\bb$-module and $x\in X$, we say that a projection $P\in\bb$ is a {\it support} 
of $x$, if $P\cd x=x$. A {\it contractive} seminormed left $\bb$-module $Y$ is called {\it 
semi--Ruan module}, if it has the following ``property (sR)'': for $x,y\in Y$ with orthogonal 
supports we have 
$$
\|x+y\|^2\le\|x\|^2+\|y\|^2,
$$
 hence for every $x_k\in x; k=1,...,n$ with pairwise orthogonal supports we have $\|\sum_{k=1}^n x_k\|^2\le\sum_{k=1}^n\|x_k\|^2$.

(Semi-Ruan modules were introduced and studied in~\cite{he3}. Then, in more general context and 
with more advanced results, they were investigated in~\cite{wit}. However, earlier the same class 
of modules actually cropped up in~\cite{mag}). 
\begin{definition} \label{lamsp}
For a linear space $E$, a seminorm on $HE$ is called {\it Lambert seminorm,} or briefly, {\it 
$L$--seminorm}, if the left $\bb$-module $HE$ is a semi--Ruan module. In other words, $L$--seminorm 
is a $PL$--seminorm, satisfying the property (sR). The linear space, endowed with an $L$--seminorm, 
is called {\it seminormed Lambert space}, or briefly {\it seminormed $L$--space}. In a similar way, 
we use the term {\it normed Lambert space} ( = {\it normed $L$--space)}, but in this case we 
usually omit the word ``normed''. 
\end{definition}
As to examples of $PL$--spaces, it is easy to see that $E_{\min}$ is actually an $L$--space for all 
normed spaces $E$, whereas $E_{\max}$ is, generally speaking, not an $L$--space. The $PL$--space 
$L_p(X)$ is an $L$--space if, and only if $2\le p$. (Of course, we suppose here that our measure 
space is not a single atom). Finally, if $K$ is a pre--Hilbert space, we can endow it with a 
so-called {\it Hilbert $L$-norm}, obtained after identifying $HK$ with $H\ot_{hil}K$. 

The following example will not be used in this paper. However, we mention it because of its 
importance  in the theory of $L$--spaces. 
\begin{example} \label{ex4}
   (``Concrete $L$--space''). Suppose that $E$ is given as a subspace of $\bb(K,L)$ for some Hilbert spaces $K,L$. Consider the operator $\gamma:HE\to\bb(K, H\hil L)$,
well defined by taking $\xi T: \xi\in H, T\in E$ to the operator $x\mt\xi\ot T(x); x\in K$. 
Introduce a seminorm on $HE$, setting $\|u\|:=\|\gamma(u)\|$. Then it is not difficult to show that 
$\|\cd\|$ is actually a norm, making $E$ an $L$--space. 

As a matter of fact, this example is, in a sense, universal: every $L$--space is $L$--isometrically 
isomorphic to some concrete, i.e. operator space. This assertion can be rather quickly derived from 
the non--coordinate version of the result of Lambert~\cite[Folgerung 1.3.6]{lam} about the 
embedding of his spaces into products of copies of $\ell_2$. However, details are outside the scope 
of this paper. 
\end{example}

\medskip
From now on we proceed to a special tensor product within the class of $L$--spaces. As we shall 
see, its definition is parallel to that of the $PL$--tensor product, but the resulting object turns 
out to be a quite different thing. 

Let us fix, for a time, two $PL$-spaces $E$ and $F$. 
\begin{definition}
 The tensor product of $E$ and $F$ relative to
the class of all normed $L$--spaces is called {\it non-completed $L$--tensor product of our 
spaces.} 
\end{definition}
\begin{remark} \label{maxtp}
 This tensor product is a ``non--coordinate' version of what Lambert calls maximal tensor product.
 Indeed, in a sense, it is maximal within a reasonable class of tensor products, and it plays in Lambert's theory
 a role similar to the role of the operator-projective tensor product in the theory of
 quantum ( = abstract operator) spaces. See details in~\cite[3.1.1]{lam}.
\end{remark}
We shall prove the existence of this kind of tensor product, displaying its explicit construction. 
Such a construction and the crucial Proposition \ref{triang} can be considered as the 
``non--coordinate'' version of what was done by Lambert. 

Recall the diamond product in all its varieties. The following proposition concerns all linear 
spaces $E,F$ without any additional structure. First, note two identities 
\begin{align} \label{usf}
a\cd u\di b\cd v=(a\di b)(u\di v);\,\,\, 
a\cd[(b\cd u)\di v]=a(b\di\id)\cd(u\di v); a,b\in\bb, u\in HE, v\in HF
\end{align}
 that can be easily checked on elementary tensors. 
\begin{proposition} \label{repr3}
    Every $U\in H(E\ot F)$ can be represented as $a\cd(\sum_{k=1}^n u_k\di v_k)$, 
$a\in\bb, u_k\in HE,v_k\in HF, k=1,...,n$, where 
$u_k$ have pairwise orthogonal supports. 
\end{proposition}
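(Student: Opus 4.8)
The plan is to reduce to the case of a single elementary tensor $U = \xi(x\ot y)$ already handled in Proposition \ref{presU}, and then to glue finitely many such pieces together while forcing the $HE$-components to acquire pairwise orthogonal supports. First I would write an arbitrary $U \in H(E\ot F)$ as a finite sum $U = \sum_{j=1}^m \xi_j(x_j\ot y_j)$ of elementary tensors. Since $H$ is separable and infinite-dimensional, I can choose a family of $m$ pairwise orthogonal unit vectors $e_1,\dots,e_m \in H$ (equivalently, $m$ mutually orthogonal rank-one projections $P_j := e_j\circ e_j$). The idea is to ``spread out'' the $j$-th summand onto the $e_j$-direction: using surjectivity of the diamond operation as in Proposition \ref{presU}, for each $j$ pick nonzero $\zeta_j \in H$ and $a_j \in \bb$ with $\xi_j = a_j(e_j \di \zeta_j)$, so that $\xi_j(x_j\ot y_j) = a_j \cd (e_j x_j \di \zeta_j y_j)$.

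Next I would collect the scalars $a_j$ into a single operator. Set $u_j := e_j x_j \in HE$ and $v_j := \zeta_j y_j \in HF$; then $u_j$ has support $P_j$, and the $P_j$ are pairwise orthogonal, which is exactly the support condition required. Now $U = \sum_{j=1}^m a_j \cd (u_j \di v_j)$, but the $a_j$ differ with $j$, so this is not yet of the desired shape $a\cd(\sum_k u_k\di v_k)$. To fix this, observe that $a_j \cd (u_j\di v_j) = a_j P_j \cd (u_j\di v_j)$ since $P_j\cd u_j = u_j$ (and $P_j\di \id$ acts as $P_j$ on the relevant slot, cf. the identities in (\ref{usf})). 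Define $a := \sum_{j=1}^m a_j P_j$; because the projections $P_j$ have orthogonal ranges, this sum is a well-defined bounded operator (its action on $e_j$-components is governed by $a_j$ and it annihilates the orthogonal complement of $\bigoplus_j \operatorname{ran} P_j$). Then $a \cd (u_j \di v_j) = a_j P_j\cd(u_j\di v_j) = a_j\cd(u_j\di v_j)$ for each $j$, using again that $u_j\di v_j$ is supported on $P_j$. Summing over $j$ gives $a\cd\bigl(\sum_{j=1}^m u_j\di v_j\bigr) = U$, which is the required representation with $n = m$.

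The one point that needs care — and which I expect to be the main obstacle — is verifying cleanly that $a \cd (u_j \di v_j) = a_j \cd (u_j\di v_j)$, i.e.\ that the ``wrong'' summands $a_i P_i$ with $i\ne j$ contribute nothing. This comes down to the fact that the support of $u_j\di v_j = \vartheta_\ii(u_j, v_j)$ in the $H$-slot is dominated by $P_j$: indeed $u_j = e_j x_j$, and $e_j\di\eta \in \operatorname{ran} P_j$ forces $P_j\cd(u_j\di v_j) = u_j\di v_j$, so $(a_iP_i)\cd(u_j\di v_j) = a_i P_i P_j\cd(u_j\di v_j) = 0$ for $i\ne j$ by orthogonality $P_iP_j = 0$. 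A routine check on elementary tensors, together with the identities (\ref{usf}), makes this rigorous; no genuinely new idea is needed beyond the separability of $H$ (to supply enough orthogonal directions) and the surjectivity statement from Proposition \ref{presU}.
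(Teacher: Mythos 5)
Your strategy coincides with the paper's: decompose $U$ into finitely many pieces, push the $HE$--components onto pairwise orthogonal parts of $H$, and absorb the resulting corrections into a single operator $a$. But the step you yourself flagged as the main obstacle fails as written. You define $a:=\sum_j a_jP_j$ with $P_j=e_j\circ e_j$ and justify $a\cd(u_j\di v_j)=a_j\cd(u_j\di v_j)$ by the assertion that $e_j\di\eta\in\mathrm{ran}\,P_j$. That assertion is false: $u_j\di v_j=(e_j\di\zeta_j)(x_j\ot y_j)$, and the vector $e_j\di\zeta_j=\iota(e_j\ot\zeta_j)$ has no reason to lie in $\mathrm{ran}\,P_j=\co e_j$ --- the map $\eta\mt e_j\di\eta$ is an isometry of $H$ onto the infinite-dimensional subspace $\mathrm{ran}(P_j\di\id)$, whereas $P_j$ has rank one. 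The support of $u_j\di v_j$ in the $\bb$--module $H(E\ot F)$ is $P_j\di\id$, not $P_j$; consequently $P_j\cd(u_j\di v_j)\ne u_j\di v_j$ in general and the cross terms $(a_iP_i)\cd(u_j\di v_j)$, $i\ne j$, need not vanish.

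The repair is the operator you name only in passing: set $a:=\sum_j a_j(P_j\di\id)$. Then by the identities (\ref{usf}) one has $(P_i\di\id)\cd(u_j\di v_j)=(P_i\cd u_j)\di v_j=\de_{ij}\,u_j\di v_j$, so all cross terms die, $a\cd\bigl(\sum_j u_j\di v_j\bigr)=\sum_j a_j\cd(u_j\di v_j)=U$, and the $u_j=e_jx_j$ retain their pairwise orthogonal supports $P_j$. This is precisely the paper's device: starting from the representation of Proposition \ref{presU}, it takes isometries $S_k$ with orthogonal final projections, replaces $u_k$ by $S_k\cd u_k$, and sets $a:=\sum_k a_k(S_k^*\di\id)$ --- note the ``$\di\id$''. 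With this one correction your argument is sound and is essentially the paper's proof, specialized to summands with rank-one supports.
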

\begin{proof}
Represent $U$ as in Proposition \ref{presU}. Choose isometric operators $S_1,...,S_n\in\bb$ with 
pairwise orthogonal final projections. Set 
$$
a:=\sum_{k=1}^na_kS^*_k\di\id \qq {\rm and} \qq u_k':=S_k\cd u_k; k=1,...,n.
$$
Since $S^*_kS_l=\de^l_k\id$, the identities (\ref{usf}) imply that 
$$
a\cd\left[\sum_{k=1}^nu'_k\di v_k\right]=\sum_{k,l=1}^n[a_k(S^*_k\di\id)]\cd[(S_l\cd u_l)\di v_l]=
$$
$$
\sum_{k,l=1}^n[a_k(S^*_k\di\id)(S_l\di\id)]\cd[u_l\di v_l]=\sum_{k=1}^na_k\cd
(u_k\di v_k)=U.
$$
Finally, the elements $u'_k$ have orthogonal supports $S_kS^*_k; k=1,...,n$. 
\end{proof}

\medskip
Now, for a given $U\in H(E\ot F)$, we introduce the number 
 \begin{align}  \label{inf2}
\|U\|_l=\inf\left\{\|a\|\left(\sum_{k=1}^n\|u\|^2\|v\|^2\right)^{\frac{1}{2}}\right\},
\end{align}
where the infimum is taken over all possible representations of $U$ in the form indicated by 
Proposition \ref{repr3}. We distinguish the obvious 
\begin{proposition} \label{con}
For every $U\in H(E\ot F)$ and $a\in\bb$ we have $\|a\cd U\|_l\le\|a\|\|U\|_l$. 
\end{proposition}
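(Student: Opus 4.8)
The plan is to verify the module estimate directly from the infimum formula (\ref{inf2}), exploiting the fact that the diamond product interacts well with left module multiplication. Fix $U\in H(E\ot F)$ and $a\in\bb$, and let $\ve>0$. Choose a representation $U=b\cd\left(\sum_{k=1}^n u_k\di v_k\right)$ with the $u_k$ having pairwise orthogonal supports and with $\|b\|\left(\sum_{k=1}^n\|u_k\|^2\|v_k\|^2\right)^{1/2}<\|U\|_l+\ve$.

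The key observation is that $a\cd U=(ab)\cd\left(\sum_{k=1}^n u_k\di v_k\right)$, simply because module multiplication is associative: $a\cd(b\cd w)=(ab)\cd w$ for $w\in H(E\ot F)$. This is again a representation of the type required by Proposition \ref{repr3} — the same elements $u_k, v_k$, with the same pairwise orthogonal supports, only the ``outer'' operator $b$ replaced by $ab$. Hence, applying the definition of $\|\cd\|_l$ to this particular representation,
\[
\|a\cd U\|_l\le\|ab\|\left(\sum_{k=1}^n\|u_k\|^2\|v_k\|^2\right)^{\frac{1}{2}}\le\|a\|\,\|b\|\left(\sum_{k=1}^n\|u_k\|^2\|v_k\|^2\right)^{\frac{1}{2}}<\|a\|(\|U\|_l+\ve).
\]
Letting $\ve\to0$ gives $\|a\cd U\|_l\le\|a\|\|U\|_l$, as desired.

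There is essentially no obstacle here; the statement is called ``obvious'' precisely because the only thing one uses is associativity of the outer multiplication together with submultiplicativity of the operator norm, and the infimum in (\ref{inf2}) is taken over a class of representations that is stable under left multiplication of the outer factor. The only point deserving a word of care is that multiplying $b$ by $a$ on the left does not disturb the orthogonality-of-supports condition, which constrains only the $u_k$; since that condition is untouched, every admissible representation of $U$ yields an admissible representation of $a\cd U$ of no greater ``cost'' after scaling by $\|a\|$, which is exactly what the inequality records.
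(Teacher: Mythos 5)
Your proof is correct and is exactly the argument the paper has in mind: the paper states Proposition \ref{con} as ``obvious'' without proof, and the intended justification is precisely that an admissible representation $U=b\cd(\sum_k u_k\di v_k)$ yields the admissible representation $a\cd U=(ab)\cd(\sum_k u_k\di v_k)$ with cost at most $\|a\|\|b\|(\sum_k\|u_k\|^2\|v_k\|^2)^{1/2}$. Nothing further is needed.
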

\begin{proposition} \label{triang}
 The function $U\mt\|U\|_l$ is a seminorm on $H(E\ot F)$.
\end{proposition}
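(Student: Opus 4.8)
The homogeneity of $\|\cdot\|_l$ under scalar multiplication is immediate from the definition of the infimum in (\ref{inf2}), and positivity is clear as well, so the whole content is the triangle inequality $\|U+W\|_l\le\|U\|_l+\|W\|_l$. The plan is to take arbitrary representations of $U$ and $W$ in the form given by Proposition \ref{repr3}, say $U=a\cd(\sum_{k=1}^n u_k\di v_k)$ and $W=b\cd(\sum_{j=1}^m u'_j\di v'_j)$ with the $u_k$ having pairwise orthogonal supports and likewise the $u'_j$, and to build from them a single representation of $U+W$ of the same type whose ``cost'' $\|c\|(\sum\|\cdot\|^2\|\cdot\|^2)^{1/2}$ is at most the sum of the two costs, up to an arbitrarily small error. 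Passing to the infimum then gives the inequality.

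The key device is the same trick of spreading out supports that was used in the proof of Proposition \ref{repr3}. First I would rescale: replacing $a$ by $\lambda a$ and each $u_k$ by $\lambda^{-1}u_k$ (and similarly on the $W$ side with a parameter $\mu$) does not change $U$ or $W$, but it lets me arrange that $\|a\|$ equals $(\sum_k\|u_k\|^2\|v_k\|^2)^{1/2}$ up to a factor, so that both factors in each product are balanced; concretely one normalizes so that $\|a\|=\bigl(\sum_k\|u_k\|^2\|v_k\|^2\bigr)^{1/2}$, and then the cost of that representation is $\|a\|^2$. Next, choose isometries $S_1,\dots$ and $T_1,\dots$ in $\bb$ with pairwise orthogonal final projections, all of them mutually orthogonal across the two families (this uses that $H$ is infinite-dimensional, which is fixed throughout the paper). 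Put $\tilde u_k:=S_k\cd u_k$, $\tilde u'_j:=T_j\cd u'_j$; by (\ref{gooddi})/(\ref{good2}) these have the same norms as before and now all of $\tilde u_1,\dots,\tilde u_n,\tilde u'_1,\dots,\tilde u'_m$ have pairwise orthogonal supports. Using the identities (\ref{usf}) exactly as in Proposition \ref{repr3}, one checks that
\[
c\cd\Bigl[\sum_{k=1}^n \tilde u_k\di v_k+\sum_{j=1}^m \tilde u'_j\di v'_j\Bigr]=U+W,
\qquad\text{where}\qquad
c:=\sum_{k=1}^n a(S_k^*\di\id)\;\oplus\;\sum_{j=1}^m b(T_j^*\di\id),
\]
the two sums acting on orthogonal ``blocks'' of $H$ determined by the ranges of the $S_k\di\id$ and $T_j\di\id$ (so that effectively $c$ acts as $a$ composed with a ``fold-in'' on one block and as $b$ composed with a fold-in on the other).

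The main estimate, and the point I expect to be the crux, is the bound on $\|c\|$. Because the $S_k\di\id$ have orthogonal ranges, $\sum_k a(S_k^*\di\id)$ is a ``row'' of copies of (pieces of) $a$ and its norm is controlled by $\|a\|$ times a factor coming from the number of blocks; but the balancing/normalization step is precisely what kills that factor — after normalizing, $\|\sum_k a(S_k^*\di\id)\|\le\|a\|$ acting on the relevant subspace, and similarly for the $b$-part. Since the $a$-part and the $b$-part of $c$ act on orthogonal subspaces of the source, $\|c\|^2\le\|a\|^2+\|b\|^2$ (an operator that is block-diagonal across orthogonal pieces has norm the max, but here the relevant quantity entering the cost is the sum of squares of the block norms, matching the $(\sum\|\cdot\|^2\|\cdot\|^2)^{1/2}$ in (\ref{inf2})). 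Then the cost of the combined representation is
\[
\|c\|\Bigl(\sum_{k}\|\tilde u_k\|^2\|v_k\|^2+\sum_j\|\tilde u'_j\|^2\|v'_j\|^2\Bigr)^{1/2}
\le\|a\|^2+\|b\|^2,
\]
which after undoing the normalization is $\le\|U\|_l+\|W\|_l+\varepsilon$. Taking infima over the original representations and letting $\varepsilon\to0$ finishes the argument. The delicate bookkeeping is entirely in getting the orthogonality of all supports simultaneously and in verifying that the normalization makes $\|c\|^2\le\|a\|^2+\|b\|^2$ rather than something worse; everything else is a routine application of (\ref{usf}) and (\ref{good2}).
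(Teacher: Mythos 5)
Your overall strategy is the paper's: spread the supports apart with isometries, reassemble the coefficient operator as a ``row'', bound its norm by the $C^*$-identity, and normalize so that the product of the two factors in (\ref{inf2}) becomes the sum of the two costs. But the implementation fails at exactly the point you flag as the crux. If you use a \emph{separate} isometry $S_k$ for each index $k$, then for the row operator $c_U:=\sum_{k=1}^n a(S_k^*\di\id)$ one computes, using $(S_k^*\di\id)(S_{k'}\di\id)=(S_k^*S_{k'})\di\id=\de^{k'}_k\id$, that $c_Uc_U^{\,*}=n\,aa^*$, so $\|c_U\|=\sqrt{n}\,\|a\|$. The normalization $\|a\|=\bigl(\sum_k\|u_k\|^2\|v_k\|^2\bigr)^{1/2}$ does nothing to remove this factor --- it rescales $a$ against the $u_k$, whereas the $\sqrt{n}$ comes from summing $n$ full copies of $aa^*$ --- so your claimed inequality $\|\sum_k a(S_k^*\di\id)\|\le\|a\|$ is false whenever $n\ge2$ and $a\ne0$, and the resulting bound $\|c\|\le(n\|a\|^2+m\|b\|^2)^{1/2}$ ruins the final estimate.

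The fix is to use less: a single isometry $S$ for the whole $U$-family and a single $T$ for the $W$-family, with $SS^*$ orthogonal to $TT^*$. The whole point of the representation from Proposition \ref{repr3} is that the $u_k$ \emph{already} have pairwise orthogonal supports $P_k$; hence the elements $S\cd u_k$ keep pairwise orthogonal supports $SP_kS^*$ and are in addition orthogonal to the $T\cd u'_j$, so no per-index spreading is needed. Then $c:=a(S^*\di\id)+b(T^*\di\id)$ satisfies $cc^*=aa^*+bb^*$, hence $\|c\|\le(\|a\|^2+\|b\|^2)^{1/2}$ with no dimension factors, and your normalization step yields
\[
\|U+W\|_l\le(\|a\|^2+\|b\|^2)^{1/2}\bigl(\|a\|^2+\|b\|^2\bigr)^{1/2}=\|a\|^2+\|b\|^2,
\]
which is the sum of the costs of the two chosen representations; taking infima finishes the proof. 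With this single change the rest of your argument is exactly the paper's proof.
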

\begin{proof}
 Let $U=a\cd(\sum_{k=1}^n u^1_k\di v^1_k)$, $V=b\cd(\sum_{l=1}^mu^2_l\di v^2_l)$, where the elements
$u^1_k\in HE$, respectively $u^2_l\in HE$, have pairwise orthogonal supports $P_k$, respectively 
$Q_l$. Take arbitrary isometric operators $S,T\in\bb$ with orthogonal final projections and observe 
that 
$$
U+V=(a(S^*\di\id)+b(T^*\di\id))\cd\left(\sum_{k=1}^n (S\cd u^1_k)\di v^1_k+\sum_{l=1}^m(T\cd u^2_l)\di
v^2_l\right).
$$

The elements $S\cd u^1_k$ have supports $SP_kS^*$, whereas the elements $T\cd u^2_l$ have supports 
$TQ_lT^*$; hence, taking together, these elements have pairwise orthogonal supports. Therefore, by 
virtue of Proposition \ref{con} and of (\ref{inf2}), we have that 
$$
\|U+V\|_l\le\|a(S^*\di\id)+b(T^*\di\id)\|
\left(\sum_{k=1}^n\|(S\cd u^1_k)\|^2\|v^1_k\|^2+\sum_{l=1}^m\|(T\cd u^2_l\|^2\|v^2_l\|^2\right)^{\frac{1}{2}}.
$$
Combining this with the operator  $C^*$-property, we obtain that 
$$
\|U+V\|_l\le(\|a\|^2+\|b\|^2)^{\frac{1}{2}}\left(\sum_{k=1}^n\|
u^1_k)\|^2\|v^1_k\|^2+\sum_{l=1}^m\|u^2_l\|^2\|v^2_l\|^2\right)^{\frac{1}{2}}.
$$
Further, obviously we can assume that 
$$
\|a\|=\left(\sum_{k=1}^n\|u_k^1\|^2\|v^2_k\|^2\right)^\frac{1}{2} \q {\rm 
and} \q \|b\|=\left(\sum_{l=1}^m\|u_l^2\|^2\|v^2_l\|^2\right)^\frac{1}{2}.
$$
Therefore we have 
$$
\|U+V\|_l\le\|a\|^2+\|b\|^2=\|a\|\left(\sum_{k=1}^n\|u^1_k\|^2\|v^1_k\|^2\right)^{\frac{1}{2}}+
\|b\|\left(\sum_{l=1}^m\|u^2_l\|^2\|v^2_l\|^2\right)^{\frac{1}{2}}.
$$
From this with the help of (\ref{inf2}) we obtain that $\|U+V\|_l\le\|U\|_l+\|V\|_l$. 

The property of seminorms, concerning the scalar multiplication, is immediate. 
\end{proof}
\begin{proposition} \label{sR}
 The module $(H(E\ot F),\|\cd\|_l)$ has the property (sR).
\end{proposition}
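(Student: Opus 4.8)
The plan is to verify property (sR) directly from the infimum formula (\ref{inf2}), mimicking the structure of the proof of Proposition \ref{triang} but now exploiting that the two elements $U,V$ already have \emph{orthogonal supports}. So suppose $U,V\in H(E\ot F)$ have orthogonal supports, say $P\cd U=U$ and $Q\cd V=V$ with $PQ=0$. First I would fix $\ve>0$ and pick representations $U=a\cd(\sum_{k=1}^n u^1_k\di v^1_k)$ and $V=b\cd(\sum_{l=1}^m u^2_l\di v^2_l)$, with the $u^1_k$ having pairwise orthogonal supports and likewise the $u^2_l$, which are almost optimal, i.e. $\|a\|(\sum_k\|u^1_k\|^2\|v^1_k\|^2)^{1/2}\le\|U\|_l+\ve$ and similarly for $V$; and as in Proposition \ref{triang} I may normalise so that $\|a\|^2=\sum_k\|u^1_k\|^2\|v^1_k\|^2$ and $\|b\|^2=\sum_l\|u^2_l\|^2\|v^2_l\|^2$.

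The key point is that since $P\cd U=U$, I may replace $a$ by $Pa$ without changing $U$ (using $P\cd(a\cd W)=(Pa)\cd W$), and likewise replace $b$ by $Qb$; since $\|Pa\|\le\|a\|$ and $\|Qb\|\le\|b\|$ this only helps. Then, choosing isometries $S,T\in\bb$ with orthogonal final projections exactly as in Proposition \ref{triang}, I get the representation
\[
U+V=(Pa(S^*\di\id)+Qb(T^*\di\id))\cd\left(\sum_{k=1}^n (S\cd u^1_k)\di v^1_k+\sum_{l=1}^m(T\cd u^2_l)\di v^2_l\right),
\]
in which all the left-hand vectors $S\cd u^1_k$, $T\cd u^2_l$ have pairwise orthogonal supports. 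Hence by (\ref{inf2}) and the $C^*$-identity,
\[
\|U+V\|_l^2\le\|Pa(S^*\di\id)+Qb(T^*\di\id)\|^2\left(\sum_{k}\|u^1_k\|^2\|v^1_k\|^2+\sum_{l}\|u^2_l\|^2\|v^2_l\|^2\right).
\]
Now the operator $c:=Pa(S^*\di\id)+Qb(T^*\di\id)$ satisfies $cc^*=Pa a^*P^*+Qbb^*Q^*$ because the cross terms carry the factor $PQ^*=PQ=0$ (the factors $(S^*\di\id)(T\di\id)=0$ would also do, but the support orthogonality is the conceptual reason). Therefore $\|c\|^2=\|cc^*\|=\max\{\|Paa^*P^*\|,\|Qbb^*Q^*\|\}\le\max\{\|a\|^2,\|b\|^2\}$. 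Combining, and using the normalisation of $\|a\|,\|b\|$, one gets $\|U+V\|_l^2\le\max\{\|a\|^2,\|b\|^2\}\cdot(\|a\|^2+\|b\|^2)$; but I also have $\|a\|^2+\|b\|^2\le(\|U\|_l+\ve)^2/\|a\|^2\cdot\|a\|^2+\cdots$—so it is cleaner to argue directly: $\|U+V\|_l\le\|c\|\,(\|a\|^2+\|b\|^2)^{1/2}\le\max\{\|a\|,\|b\|\}(\|a\|^2+\|b\|^2)^{1/2}$. Since $\|a\|^2\le\|U\|_l+\ve$ hmm—rather, with the normalisation $\|a\|^2=\sum_k\|u^1_k\|^2\|v^1_k\|^2$ we have $\|U\|_l\le\|a\|\cdot\|a\|=\|a\|^2$ and almost-optimality gives $\|a\|^2\le\|U\|_l+\ve$, likewise $\|b\|^2\le\|V\|_l+\ve$; hence $\|U+V\|_l^2\le\max\{\|a\|^2,\|b\|^2\}(\|a\|^2+\|b\|^2)\le(\|a\|^2+\|b\|^2)^2$ is too lossy, so I instead keep $\|U+V\|_l\le\|c\|(\|a\|^2+\|b\|^2)^{1/2}$ and bound $\|c\|^2(\|a\|^2+\|b\|^2)\le(\|a\|^2+\|b\|^2)\max\{\|a\|^2,\|b\|^2\}$—and here I should have arranged, as in Proposition \ref{triang}, the sharper normalisation making $\|c\|^2=\|a\|^2+\|b\|^2$ unnecessary. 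The honest route: from $\|U+V\|_l\le\|c\|\,(\sum\|u\|^2\|v\|^2)^{1/2}$ with $\|c\|^2\le\max\{\|a\|^2,\|b\|^2\}$ and $\sum\|u\|^2\|v\|^2=\|a\|^2+\|b\|^2$, letting $\ve\to0$ yields $\|U+V\|_l^2\le\max\{\|U\|_l,\|V\|_l\}(\|U\|_l+\|V\|_l)\le\|U\|_l^2+\|V\|_l^2+\,$cross terms, which is not quite $\|U\|_l^2+\|V\|_l^2$.

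The main obstacle, then, is precisely getting the clean inequality $\|c\|^2\le\|a\|^2+\|b\|^2$ rather than $\max$, and this is exactly why I keep the normalisation from Proposition \ref{triang}: rescale within each representation so that $\|a\|^2=\sum_k\|u^1_k\|^2\|v^1_k\|^2$ and $\|b\|^2=\sum_l\|u^2_l\|^2\|v^2_l\|^2$, wait—that still gives $\|c\|^2=\max$. The resolution is that with support-orthogonality one does \emph{not} go through $cc^*$ but keeps $c$ acting on orthogonal pieces: since the vectors $S\cd u^1_k$ and $T\cd u^2_l$ live on the orthogonal ranges of $SS^*$ and $TT^*$, the operator $c$ restricted to $\mathrm{ran}(SS^*)$ is $Pa(S^*\di\id)$ and on $\mathrm{ran}(TT^*)$ is $Qb(T^*\di\id)$, so applying $c$ to the sum $W:=\sum_k(S\cd u^1_k)\di v^1_k+\sum_l(T\cd u^2_l)\di v^2_l$ and then estimating $\|U+V\|_l$ one may split $W$ as $W_1+W_2$ with orthogonal supports $SS^*\di\id$, $TT^*\di\id$, apply Proposition \ref{con} separately, and recombine. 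In short I will prove (sR) by the identical device used for the triangle inequality but observing that the extra hypothesis of orthogonal supports lets me insert the projections $P,Q$ so that the norm of the combined left multiplier becomes $(\|a\|^2+\|b\|^2)^{1/2}$ rather than a sum, which is the whole content of passing from $PL$ to $L$. Then $\|U+V\|_l\le(\|a\|^2+\|b\|^2)^{1/2}(\|a\|^2+\|b\|^2)^{1/2}=\|a\|^2+\|b\|^2\to\|U\|_l^2+\|V\|_l^2$ as wait—again a mismatch of exponents; the correct final line, exactly as in Proposition \ref{triang}'s display, is $\|U+V\|_l\le\|c\|\,(\ldots)^{1/2}$ with $\|c\|=(\|a\|^2+\|b\|^2)^{1/2}$ and $(\ldots)^{1/2}=$ well, I will write $\|U+V\|_l^2\le\|c\|^2\sum\|u\|^2\|v\|^2=(\|a\|^2+\|b\|^2)(\|a\|^2+\|b\|^2)$; since with the normalisation $\|a\|^2=\|U\|_l,\ \|b\|^2=\|V\|_l$ up to $\ve$, this reads $\|U+V\|_l^2\le(\|U\|_l+\|V\|_l)^2$—the $PL$ estimate, not (sR). Hence the genuinely essential step, and the one I expect to require care, is to avoid squaring the $\ell^2$-sum: I must normalise instead so that $\|a\|=(\sum_k\|u^1_k\|^2\|v^1_k\|^2)^{1/2}$ gives $\|a\|=\|U\|_l$ up to $\ve$ with the \emph{factor} $(\sum)^{1/2}$ kept equal to $1$ by rescaling the $u^1_k$; then $\|U+V\|_l\le\|c\|\cdot 1=(\|a\|^2+\|b\|^2)^{1/2}\le(\|U\|_l^2+\|V\|_l^2+2\ve)^{1/2}$, and $\ve\to0$ finishes. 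That rescaling trick, together with the orthogonality-of-supports computation $cc^*=Paa^*P+Qbb^*Q$, is the crux; everything else is the bookkeeping already carried out in Proposition \ref{triang}.
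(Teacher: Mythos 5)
You have assembled the two genuinely important ingredients of the paper's own argument: reuse the $S,T$ construction from Proposition \ref{triang}, and exploit the hypothesis by replacing $a,b$ with $Pa,Qb$, so that $c:=Pa(S^*\di\id)+Qb(T^*\di\id)$ satisfies $cc^*=Paa^*P+Qbb^*Q$ and hence, since $PQ=0$, $\|c\|=\max\{\|Pa\|,\|Qb\|\}$ rather than an $\ell^2$-sum. But the proof is never actually closed: every concrete chain of inequalities you write down terminates in a bound strictly weaker than $(\|U\|_l^2+\|V\|_l^2)^{1/2}$ --- first $\bigl(\max\{\|U\|_l,\|V\|_l\}(\|U\|_l+\|V\|_l)\bigr)^{1/2}$, then $\|U\|_l+\|V\|_l$, and the final attempt contains an arithmetic slip: with your normalisation $\sum_k\|u^1_k\|^2\|v^1_k\|^2=\sum_l\|u^2_l\|^2\|v^2_l\|^2=1$ the second factor in the basic estimate is $(1+1)^{1/2}=\sqrt2$, not $1$, and the operator factor is $\max\{\|a\|,\|b\|\}\approx\max\{\|U\|_l,\|V\|_l\}$, so you end with $\sqrt2\,\max\{\|U\|_l,\|V\|_l\}\ge(\|U\|_l^2+\|V\|_l^2)^{1/2}$ --- again the wrong direction. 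The intermediate idea of splitting $W$ into two pieces with orthogonal supports and applying Proposition \ref{con} to each separately also cannot work as stated, because recombining the two pieces needs the triangle inequality, which only returns the $PL$-type estimate.

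The missing step is a one-line choice of normalisation that you circle around but never make, and it is the opposite of the ones you try: put \emph{all} the weight into the vector factor and none into the operator factor. That is, rescale so that $\|a\|=\|b\|=1$ (absorbing scalars into the $u^1_k$, $u^2_l$) while $\bigl(\sum_k\|u^1_k\|^2\|v^1_k\|^2\bigr)^{1/2}\le\|U\|_l+\ve$ and similarly for $V$. Then $\|c\|=\max\{1,1\}=1$ and the estimate reads
$$
\|U+V\|_l^2\le\sum_{k=1}^n\|u^1_k\|^2\|v^1_k\|^2+\sum_{l=1}^m\|u^2_l\|^2\|v^2_l\|^2\le(\|U\|_l+\ve)^2+(\|V\|_l+\ve)^2,
$$
and $\ve\to0$ finishes. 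The conceptual point is that the support orthogonality upgrades the operator factor from $(\|a\|^2+\|b\|^2)^{1/2}$ to $\max\{\|a\|,\|b\|\}$, which is only an improvement if the norms $\|U\|_l,\|V\|_l$ are carried by the $\ell^2$-type sum (where they add in squares) and the operator factor is kept trivially equal to $1$. This is exactly how the paper concludes; as written, your argument does not establish (sR).
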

\begin{proof}
 Let $U,V\in H(E\ot F)$ have orthogonal supports $P$ and $Q$. Choose their arbitrary suitable representation, say
$U=a\cd(\sum_{k=1}^n u^1_k\di v^1_k)$, $V=b\cd(\sum_{l=1}^mu^2_l\di v^2_l)$. Take $S,T$ as in 
Proposition \ref{triang}; then, by a similar argument, we have 
$$
\|U+V\|_l\le\|a(S^*\di\id)+b(T^*\di\id)\|\left(\sum_{k=1}^n\|u^1_k\|^2\|v^1_k\|^2+
\sum_{l=1}^m\|u^2_l\|^2\|v^2_l\|^2\right)^{\frac{1}{2}}.
$$
Evidently, we can assume that $a=Pa,b=Qb$ and $\|a\|=\|b\|=1$. Therefore, by the operator 
$C^*$-property, we have 
$$
\|a(S^*\di\id)+b(T^*\di\id)\|=\|Paa^*P+Qbb^*Q\|^{\frac{1}{2}}=\max\{\|a\|,\|b\|\}=1.
$$
Consequently, by (\ref{inf2}), we have that $\|U+V\|_l^2\le\|U\|_l^2+\|V\|_l^2$. 
\end{proof}
Combining the last three propositions, we see that {\it $\|\cd\|_l$ is an $L$--seminorm on $E\ot 
F$.} 
 We denote the resulting semi-normed $L$-space by $E\ot_l F$.

Like in the ``$PL$--case'' (cf. (\ref{dim})), we have the obvious estimation 
\begin{align} \label{dim2}
\|u\di v\|_l\le\|u\|\|v\|; \q u\in HE,v\in HF.
\end{align}
Consequently, {\it the canonical bilinear operator $\vartheta:E\times F\to E\ot_l F$ is 
$L$-contractive.} 

%
%
\begin{proposition} \label{comd2}
 Let $G$ be an $L$-space, $\rr:E\times F\to G$ an $L$--bounded bilinear
operator, $R:E\ot_{pl}F\to G$ the associated linear operator. Then $R$ is $L$-bounded, and   
$\|\rr\|_{lb}=\|R\|_{lb}$. 
\end{proposition}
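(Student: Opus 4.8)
The plan is to mimic the proof of Proposition \ref{comdiag} but with the $L$--norm $\|\cd\|_l$ in place of $\|\cd\|_{pl}$. First I would recall, as in the $PL$--case, that since $\rr$ is $L$--bounded, the associated linear operator $R:E\ot F\to G$ exists at the purely algebraic level, and $R_\ii:H(E\ot F)\to HG$ is a $\bb$--module morphism with $R_\ii(u\di v)=\rr_\ii(u,v)$ for all $u\in HE$, $v\in HF$ (this is the same identity used in Proposition \ref{comdiag}, valid because $\vartheta$ linearizes to $R$).

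Next I would estimate $\|R_\ii(U)\|$ for an arbitrary $U\in H(E\ot F)$, using a representation $U=a\cd(\sum_{k=1}^n u_k\di v_k)$ with the $u_k$ having pairwise orthogonal supports, as furnished by Proposition \ref{repr3}. Applying the module morphism property and the identity above, $R_\ii(U)=a\cd\bigl(\sum_{k=1}^n\rr_\ii(u_k,v_k)\bigr)$. Now the key point: the elements $\rr_\ii(u_k,v_k)\in HG$ inherit pairwise orthogonal supports from the $u_k$. Indeed, $\rr_\ii(u_k,v_k)=\rr_\ii(P_k\cd u_k,v_k)$ where $P_k$ is a support of $u_k$, and since $\rr_\ii(a\cd u,v)=(a\di\id)\cd\rr_\ii(u,v)$ (check on elementary tensors, using $\rr_\ii(\xi x,\eta y)=(\xi\di\eta)\rr(x,y)$ and $(P_k\di\id)(\xi\di\eta)=P_k\xi\di\eta$), we get that $P_k\di\id$ is a support of $\rr_\ii(u_k,v_k)$; and $P_k\perp P_l$ forces $P_k\di\id\perp P_l\di\id$. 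Hence, since $G$ is an $L$--space, the module $HG$ has property (sR), so $\|\sum_{k=1}^n\rr_\ii(u_k,v_k)\|^2\le\sum_{k=1}^n\|\rr_\ii(u_k,v_k)\|^2\le\|\rr\|_{lb}^2\sum_{k=1}^n\|u_k\|^2\|v_k\|^2$. Combining with contractivity of the module action on $HG$, $\|R_\ii(U)\|\le\|a\|\bigl(\sum_{k=1}^n\|u_k\|^2\|v_k\|^2\bigr)^{1/2}\|\rr\|_{lb}$. Taking the infimum over all such representations and invoking the definition \eqref{inf2} gives $\|R_\ii(U)\|\le\|\rr\|_{lb}\|U\|_l$, so $R$ is $L$--bounded with $\|R\|_{lb}\le\|\rr\|_{lb}$.

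For the reverse inequality I would argue exactly as in Proposition \ref{comdiag}: from $R_\ii(u\di v)=\rr_\ii(u,v)$ together with \eqref{dim2} we get $\|\rr_\ii(u,v)\|=\|R_\ii(u\di v)\|\le\|R\|_{lb}\|u\di v\|_l\le\|R\|_{lb}\|u\|\|v\|$, whence $\|\rr\|_{lb}\le\|R\|_{lb}$. Together the two estimates yield $\|\rr\|_{lb}=\|R\|_{lb}$.

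The main obstacle — really the only non-routine step — is the support-transfer observation: that $\rr_\ii$ carries a support of $u_k$ to a support of $\rr_\ii(u_k,v_k)$, so that property (sR) of $HG$ can be applied to the sum $\sum_k\rr_\ii(u_k,v_k)$. Everything else is a direct transcription of the $PL$--case. One should be slightly careful that the orthogonal-support hypothesis in Proposition \ref{repr3} is exactly what makes the projections $P_k\di\id$ (equivalently, a common support dominating all $u_k$, amplified) orthogonal, matching the hypothesis of property (sR); this is where the passage from the $PL$--tensor product to the $L$--tensor product genuinely uses the Lambert axiom on $G$, and it is the reason the statement is about $L$--spaces rather than arbitrary $PL$--spaces.
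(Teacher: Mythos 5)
Your proposal is correct and follows essentially the same route as the paper's own proof: represent $U$ via Proposition \ref{repr3}, transfer the orthogonal supports $P_k$ of the $u_k$ to supports $P_k\di\id$ of the elements $\rr_\ii(u_k,v_k)$ via the identity $(P_k\di\id)\cd\rr_\ii(u_k,v_k)=\rr_\ii(P_k\cd u_k,v_k)$, invoke property (sR) of $HG$, and deduce the reverse inequality from (\ref{dim2}). You have also correctly isolated the support-transfer step as the one point where the Lambert axiom on $G$ is genuinely used.
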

\begin{proof}
Take $U\in H(E\ot_{l}F))$ and represent it as in Proposition \ref{repr3}. Since $R_\ii$ is a 
$\bb$-module morphism, and $R_\ii(u_k\di v_k)=\rr_\ii(u_k,v_k)$ for all $k$, we have
that $R_\ii(U)=a\cd(\sum_{k=1}^n\rr_\ii(u_k, v_k))$. 

Now look at $\rr_\ii(u_k,v_k)\in HG$ for some $k$. 
Obviously we have 
 $(P_k\di\id)\cd\rr_\ii(u_k,v_k)=\rr_\ii(P_k\cd u_k, v_k)$. This implies that the
elements $\rr_\ii(u_k,v_k)\in HG$ have pairwise orthogonal supports, namely $P_k\di\id$. Therefore, 
since $G$ is an $L$-space, we have 
$$
\|R_\ii(U)\|\le\|a\|\left(\sum_{k=1}^n\|\rr_\ii(u_k,v_k)\|^2\right)^\frac{1}{2}\le
$$
$$
\|a\|\left(\sum_{k=1}^n\|\rr\|_{lb}^2\|u_k\|^2\|v_k\|^2\right)^\frac{1}{2}=
\|\rr\|_{lb}\|a\|\left(\sum_{k=1}^n\|u_k\|^2\|v_k\|^2\right)^\frac{1}{2}.
$$

 From this, using (\ref{inf2}),
we obtain the estimate $\|R_\ii(U)\|\le\|\rr\|_{lb}\|\|U\|_l\|$. Consequently, 
$\|R\|_{lb}\le\|\rr\|_{lb}$. The converse inequality easily follows from (\ref{dim2}). 
\end{proof}
\begin{proposition} \label{norm4}
{\rm (As a matter of fact),} $\|\cd\|_{l}$ is a norm. 
\end{proposition}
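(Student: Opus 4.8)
The plan is to mimic exactly the argument used for Proposition \ref{norm} in the proto-Lambert case. By Proposition \ref{pro1}, it suffices to show that the underlying seminorm on $E\ot_l F$ is in fact a norm, i.e.\ that for a nonzero elementary tensor $\xi w$ with $w\in E\ot F$, $\|\xi\|=1$, $w\ne 0$, we have $\|\xi w\|_l\ne 0$. Since $E$ and $F$ are normed spaces, there are bounded functionals $f:E\to\co$ and $g:F\to\co$ with $(f\ot g)(w)\ne 0$.

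First I would invoke Proposition \ref{autcombil} to conclude that the bilinear functional $f\times g:E\times F\to\co$ is $L$--bounded. Next, since $\co$ (with its unique $PL$--quantization $H\co=H$) is trivially a semi--Ruan module — the property (sR) on $H$ reduces to the Hilbert-space Pythagorean identity for vectors with orthogonal supports — it is in particular an $L$--space. Therefore Proposition \ref{comd2}, applied with $G:=\co$, yields that the associated linear operator $R:E\ot_l F\to\co$, which is just $f\ot g$, is $L$--bounded; hence its amplification $(f\ot g)_\ii:H(E\ot_l F)\to H$ is bounded. Since $(f\ot g)_\ii(\xi w)=[(f\ot g)(w)]\,\xi\ne 0$, we must have $\|\xi w\|_l\ne 0$, which completes the argument.

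The only point requiring a word of care is the verification that the scalar field $\co$ is an $L$--space, i.e.\ that the module $H=H\co$ satisfies property (sR): if $x,y\in H$ have orthogonal supports $P,Q$ (projections in $\bb$ with $P\cd x=x$, $Q\cd y=y$, $PQ=0$), then $x\perp y$ in $H$, so $\|x+y\|^2=\|x\|^2+\|y\|^2\le\|x\|^2+\|y\|^2$; and contractivity of the module $H$ over $\bb$ is immediate. I do not expect any real obstacle here — this proof is entirely parallel to that of Proposition \ref{norm}, substituting Proposition \ref{comd2} for Proposition \ref{comdiag} and using that $\co$ lies in the (smaller) class of $L$--spaces rather than merely $PL$--spaces.
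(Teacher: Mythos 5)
Your proof is correct and is essentially the paper's own argument: the paper simply remarks that since $\co$ is an $L$--space, the proof of Proposition \ref{norm} carries over with obvious modifications (i.e.\ with Proposition \ref{comd2} in place of Proposition \ref{comdiag}), which is exactly what you have written out, including the easy check of property (sR) for $H=H\co$.
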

\begin{proof}
Since $\co$ is an $L$--space, the argument in Proposition \ref{norm} works with obvious 
modifications. 
\end{proof}
Combining Propositions \ref{con}--\ref{norm4}, we immediately obtain 
\begin{theorem} \label{exlam}
   {\rm (Existence theorem)} The pair $(E\ot_{l}F,\vartheta)$  is a non-completed $L$--tensor product of $E$ and $F$.
\end{theorem}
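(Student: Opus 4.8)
\textbf{Proof proposal for Theorem \ref{exlam}.}
The plan is to verify the two requirements of Definition \ref{deftp} for the pair $(E\ot_l F,\vartheta)$, relative to the class $\mho$ of all normed $L$--spaces. By the three propositions preceding the theorem we already know that $\|\cd\|_l$ is an $L$--seminorm on $E\ot F$, that it is in fact a norm (Proposition \ref{norm4}), so $E\ot_l F$ is a genuine normed $L$--space and hence a member of $\mho$, and that the canonical bilinear operator $\vartheta:E\times F\to E\ot_l F$ is $L$--contractive by (\ref{dim2}). Thus the pair $(E\ot_l F,\vartheta)$ is a legitimate candidate, and what remains is the universal property.

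First I would take an arbitrary $L$--space $G\in\mho$ and an arbitrary $L$--bounded bilinear operator $\rr:E\times F\to G$. Since $\vartheta$ has range spanning $E\ot F$ linearly, there is at the purely algebraic level a unique linear map $R:E\ot_l F\to G$ with $R\vartheta=\rr$, namely the one sending $x\ot y$ to $\rr(x,y)$; this takes care of existence and uniqueness of $R$ as a linear map, so the only real content is the norm statement $\|R\|_{lb}=\|\rr\|_{lb}$. But this is exactly Proposition \ref{comd2}: applied with this $G$ and this $\rr$, it says that the associated $R$ is $L$--bounded with $\|R\|_{lb}=\|\rr\|_{lb}$. (One should note that Proposition \ref{comd2} is stated with $R:E\ot_{pl}F\to G$, evidently a typo for $E\ot_l F$, since its proof uses representations from Proposition \ref{repr3} and the formula (\ref{inf2}) for $\|\cd\|_l$; I would just invoke it in the corrected form.) Finally, the commutativity of the required diagram is immediate from $R\vartheta=\rr$.

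Putting these pieces together: $E\ot_l F\in\mho$; $\vartheta$ is $L$--contractive; for every $G\in\mho$ and every $L$--bounded $\rr:E\times F\to G$ there is a unique linear $R$ with $R\vartheta=\rr$, this $R$ is $L$--bounded, and $\|R\|_{lb}=\|\rr\|_{lb}$. This is precisely Definition \ref{deftp}, so $(E\ot_l F,\vartheta)$ is a non-completed $L$--tensor product of $E$ and $F$.

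There is essentially no obstacle here: all the substantive work — the triangle inequality for $\|\cd\|_l$ (Proposition \ref{triang}), the property (sR) (Proposition \ref{sR}), the fact that it is a norm (Proposition \ref{norm4}), and the universal estimate (Proposition \ref{comd2}) — has already been carried out. The theorem is a bookkeeping assembly of those results, exactly as the phrase ``Combining Propositions \ref{con}--\ref{norm4}'' signals; the one point worth being careful about is to state explicitly why the linear map $R$ exists and is unique before quoting Proposition \ref{comd2} for its $L$--boundedness and norm.
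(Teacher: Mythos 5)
Your proposal is correct and follows exactly the paper's route: the paper's own proof is the single line ``Combining Propositions \ref{con}--\ref{norm4}, we immediately obtain'' the theorem, and you have simply unpacked that assembly (candidacy of the pair via Propositions \ref{triang}, \ref{sR}, \ref{norm4} and estimate (\ref{dim2}), then the universal property via Proposition \ref{comd2}). Your observation that Proposition \ref{comd2} should read $R:E\ot_l F\to G$ rather than $E\ot_{pl}F\to G$ is a correct reading of an evident typo.
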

\medskip
The non-completed $L$--tensor product has an obvious ``completed'' version. The definition of the 
{\it completed $L$--tensor product of two $PL$--spaces} and the relevant existence theorem repeat 
what was said about completed $PL$--tensor product, only we replace ``$PL$'' by ``$L$'' and the 
subscript  ``$pl$'' by ``$l$''. Thus, {\it the completed $L$--tensor product of two $PL$--spaces 
$E$ and $F$ exists, and it is the pair $(E\widehat\ot_{l}F,\widehat\vartheta)$, where 
$E\widehat\ot_{p}F $ is the completion of the $L$--space $E\ot_{l} F$, and $\widehat\vartheta$ acts 
as $\vartheta$, but with range $E\widehat\ot_{l} F $.} 

\begin{remark} \label{none}
We do not discuss here the non-coordinate version of another tensor product, the so--called 
minimal, 
  introduced in~\cite[3.1.3]{lam}. Unlike the maximal tensor product (cf.
 Remark \ref{maxtp}), it corresponds, in a sense, to the
 operator--injective tensor product in the operator space theory.
\end{remark}
\section{The Lambert tensor product of Hilbert spaces}
As we have seen before, the $PL$--tensor product is especially good for maximal $PL$--spaces and 
$L_1$-spaces with their specific $PL$--norm. Here we shall show that, in the same sense, the 
$L$--tensor product is good for Hilbert spaces with 
the minimal $L$--norm, that is $\|\cd\|_{\min}$ of Example \ref{ex1}. {\it Throughout this section, 
all Hilbert and pre--Hilbert spaces are supposed to be endowed with that $L$--norm}. 

We shall use one of equivalent definitions of the minimal $L$--norm. It is a particular case of the  
definition of the norm in the injective tensor product $E\ot_i F$ of two normed spaces, expressed 
by means of an injective operator $E\ot_i F\to\bb(E^*,F)$ (see, e.g.,\cite[pp. 62-63]{clm}). In the 
particular case of a pre-Hilbert space $K$ we obtain the following observation that we distinguish 
for the convenience of references. 
\begin{proposition} \label{isom}
There is an isometric operator ${\cal I}:HK\to{\bb}({K}^{cc},H)$, 
  well defined by $\xi x\mt\xi\circ x$.
 \end{proposition}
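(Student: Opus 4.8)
The plan is to verify directly that the operator $\cal I$ defined on elementary tensors by $\xi x\mt\xi\circ x$ is well defined, linear, and isometric, where the norm on $HK$ is the minimal $L$--norm $\|\cd\|_{\min}$ of Example \ref{ex1}. First I would recall that, by Proposition \ref{autc} and the subsequent discussion, for $u\in HK$ the minimal norm is given by $\|u\|_{\min}=\sup\{\|f_\ii(u)\|:f\in K^*,\|f\|\le1\}$. Since $K$ is a pre--Hilbert space, the Riesz representation identifies $K^{cc}$ with $K^*$, so an element $x'\in K^{cc}$ acts on $K$ as the functional $z\mt\la z,x'\ra$ (conjugate--linearity in $K$ becomes linearity over $K^{cc}$), and $\|x'\|_{K^{cc}}=\|x'\|_K$. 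The point is to match $\|f_\ii(u)\|$ with the value of the operator ${\cal I}(u)$ on the corresponding unit vector of $K^{cc}$.

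The key computation is the following. Write $u=\sum_{k=1}^n\xi_k x_k$ with $\xi_k\in H$, $x_k\in K$. Then ${\cal I}(u)=\sum_k\xi_k\circ x_k$ as an operator $K^{cc}\to H$, sending $x'\in K^{cc}$ to $\sum_k\la x',x_k\ra_{?}\xi_k$; one must be slightly careful about which way the inner product goes, but after fixing conventions, applying ${\cal I}(u)$ to $x'$ gives exactly the vector $\sum_k f(x_k)\xi_k$ where $f\in K^*$ is the functional associated to $x'$ via Riesz. But $f_\ii(u)=\sum_k f(x_k)\xi_k$ as well, since $f_\ii(\xi x)=\xi f(x)$. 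Hence ${\cal I}(u)(x')=f_\ii(u)$, so that
$$
\|{\cal I}(u)\|=\sup_{\|x'\|\le1}\|{\cal I}(u)(x')\|=\sup_{\|f\|\le1}\|f_\ii(u)\|=\|u\|_{\min}.
$$
This simultaneously shows ${\cal I}$ is well defined (the right--hand side depends only on $u$, not on the chosen representation), bounded, and isometric. Linearity of ${\cal I}$ is clear from the bilinearity of $(\xi,x)\mt\xi\circ x$ together with the fact that it is linear in $x$ (recall $x\circ y$ is linear in $x$), so it descends to a linear map on the tensor product $HK=H\ot K$.

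The main obstacle, such as it is, is purely bookkeeping with complex conjugation: one has to be consistent about whether $\xi\circ x$ is viewed as acting on $K$ or on $K^{cc}$, and about the identification of $K^*$ with $K^{cc}$ (the map $\xi\circ x$ is conjugate--linear in $x$ as a map out of $K$, hence genuinely linear as a map out of $K^{cc}$, which is why the target space in the statement is ${\bb}(K^{cc},H)$ and not ${\bb}(K,H)$). Once the conventions are pinned down, the identity ${\cal I}(u)(x')=f_\ii(u)$ is immediate on elementary tensors and extends by linearity, and the isometry statement follows from the known description of $\|\cd\|_{\min}$ recalled above. I would present the conventions first, then the one--line computation, then conclude.
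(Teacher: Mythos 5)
Your argument is correct and is essentially the paper's own: Proposition \ref{isom} is presented there as an immediate specialization of the standard isometric embedding $E\ot_i F\to\bb(E^*,F)$ for the injective tensor product, combined with the Riesz identification $K^*\cong K^{cc}$ --- which is exactly the identity ${\cal I}(u)(x')=f_\ii(u)$ that you verify, together with the description $\|u\|_{\min}=\sup\{\|f_\ii(u)\|: f\in K^*,\ \|f\|\le1\}$ recorded after Proposition \ref{autc}. The only point left implicit (in the paper as well) is that for a non-complete $K$ the unit ball of $K^*$ is represented by vectors of the completion, so a one-line density remark is needed to pass from the supremum over $K^*$ to the supremum over unit vectors of $K^{cc}$; this is routine.
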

This, in its turn, implies 
\begin{proposition} \label{orth}
{\rm (i)} For $u=\sum_{k=1}^n\lm_k\xi_kx_k\in HK$, where $\lm_k\in\co$ and $\xi_k,x_k$ are 
orthonormal systems in $H$ and $K$ respectively, we have $\|u\|=\max\{|\lm_k|; k=1,..., n\}$. 

{\rm (ii)} Every $u\in HK$ can be represented as $\sum_{k=1}^ns_k\xi_k x_k$, where $\xi_k$ and 
$x_k$ are orthonormal systems in $H$ and $K$ respectively, $s_1\ge s_2\ge ...\ge s_n>0$. 
\end{proposition}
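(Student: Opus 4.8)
The plan is to deduce Proposition~\ref{orth} directly from Proposition~\ref{isom}, reducing both assertions to the classical singular value decomposition of a finite-rank Hilbert space operator. First I would fix $u\in HK$ and consider its image $T:={\cal I}(u)\in{\bb}(K^{cc},H)$ under the isometric operator of Proposition~\ref{isom}; since $u$ is a finite sum of elementary tensors, $T$ has finite rank. The operator ${\cal I}$ being isometric means $\|u\|=\|T\|$, so everything is transferred to a statement about operator norms and decompositions of $T$.

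For part (i): if $u=\sum_{k=1}^n\lm_k\xi_kx_k$ with $\{\xi_k\}$ orthonormal in $H$ and $\{x_k\}$ orthonormal in $K$, then $T={\cal I}(u)=\sum_{k=1}^n\lm_k(\xi_k\circ x_k)$, where I must be slightly careful to read $x_k$ as an element of $K^{cc}$ so that $\xi_k\circ x_k\colon K^{cc}\to H$ sends $z\mapsto\la z,x_k\ra\xi_k$. Because the $x_k$ are orthonormal in $K$ (hence the corresponding functionals $\la\cd,x_k\ra$ form a biorthogonal-type family) and the $\xi_k$ are orthonormal in $H$, this is exactly a Schmidt-type expansion: $T$ maps the orthonormal set $\{x_k\}$ to the orthogonal vectors $\lm_k\xi_k$ and kills the orthogonal complement of their span. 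A direct computation of $\|Tz\|^2=\sum_k|\lm_k|^2|\la z,x_k\ra|^2\le(\max_k|\lm_k|^2)\|z\|^2$, with equality attained at $z=x_j$ for the index $j$ realizing the maximum, gives $\|T\|=\max\{|\lm_k|\}$, and hence $\|u\|=\max\{|\lm_k|\}$ via Proposition~\ref{isom}.

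For part (ii): I invoke the singular value decomposition for the finite-rank operator $T={\cal I}(u)\in{\bb}(K^{cc},H)$. This yields orthonormal systems $\{x_k\}$ in $K^{cc}$ (equivalently in $K$, since $K$ and $K^{cc}$ coincide as sets with the same inner-product modulus) and $\{\xi_k\}$ in $H$, together with singular values $s_1\ge s_2\ge\cdots\ge s_n>0$, such that $T=\sum_{k=1}^n s_k(\xi_k\circ x_k)$. Applying the inverse of the isometric operator ${\cal I}$ — or simply noting that ${\cal I}$ is injective and that ${\cal I}(\sum_k s_k\xi_kx_k)=\sum_k s_k(\xi_k\circ x_k)=T={\cal I}(u)$ — we conclude $u=\sum_{k=1}^n s_k\xi_kx_k$, which is the asserted representation. (One may note in passing that part (i) then re-confirms $\|u\|=s_1$, consistent with the operator-norm interpretation.)

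The only genuine point requiring care, rather than routine verification, is the bookkeeping of the complex-conjugate space $K^{cc}$: one must check that $\xi x\mapsto\xi\circ x$ of Proposition~\ref{isom} is being applied consistently, so that orthonormality "in $K$" in the statement matches orthonormality "in $K^{cc}$" needed to feed into the SVD of $T$, and that the singular vectors produced by the SVD, living a priori in $K^{cc}$, can be legitimately reported as an orthonormal system in $K$. Since conjugation is an isometric (conjugate-linear) bijection $K\to K^{cc}$ preserving the modulus of inner products, this causes no real difficulty; it is merely a notational hazard. Everything else is the standard finite-dimensional spectral/singular-value theory transported across the isometry ${\cal I}$.
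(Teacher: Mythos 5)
Your argument is correct and follows essentially the same route as the paper: part (i) by the direct norm computation after transporting $u$ through the isometry $\mathcal{I}$ of Proposition \ref{isom}, and part (ii) by applying the Schmidt/singular value decomposition to the finite-rank operator $\mathcal{I}(u)$ and pulling back. Your explicit Bessel-inequality computation for (i) and the remark on the $K^{cc}$ bookkeeping merely fill in details the paper labels ``immediate.''
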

\begin{proof}
  (i) is immediate. To prove (ii), we recall that ${\cal I}(u)$, being a {\it finite rank} operator
 between pre--Hilbert spaces, has the form $\sum_{k=1}^ns_k\xi_k\circ x_k$, where
$\xi_k, x_k$ and $s_k$ have the indicated properties. (E.g., the argument in~\cite[Section 
3.4]{heb3} works with obvious modifications). It follows that $u$ have the desired representation. 
 \end{proof}
 %
%
\begin{proposition} \label{lcon}
    Let $K$ and $L$ be pre-Hilbert spaces. Then the canonical bilinear operator 
    $\vartheta:K\times L\to K\ot_{hil} L: (x,y)\mt x\ot y$ is $L$-contractive.
\end{proposition}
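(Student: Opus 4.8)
The plan is to compute $\|\vartheta_\ii(u,v)\|$ outright for arbitrary $u\in HK$ and $v\in HL$ by passing to the singular-value-type representations furnished by Proposition \ref{orth}(ii), and then to observe that the operation $\vartheta_\ii$ sends orthonormal data to orthonormal data, so that Proposition \ref{orth}(i) computes the answer for us.

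If $u=0$ or $v=0$ the assertion is trivial, so assume both are non-zero. By Proposition \ref{orth}(ii) write $u=\sum_{k=1}^m s_k\,\xi_k x_k$ and $v=\sum_{l=1}^n t_l\,\eta_l y_l$, where $(\xi_k)_k$ and $(x_k)_k$ are orthonormal in $H$ and $K$, where $(\eta_l)_l$ and $(y_l)_l$ are orthonormal in $H$ and $L$, and $s_1\ge\cdots\ge s_m>0$, $t_1\ge\cdots\ge t_n>0$; then $\|u\|=s_1$ and $\|v\|=t_1$ by Proposition \ref{orth}(i). Expanding $\vartheta_\ii$ bilinearly and using its defining formula $\vartheta_\ii(\xi x,\eta y)=(\xi\di\eta)(x\ot y)$ on elementary tensors, one gets
\[
\vartheta_\ii(u,v)=\sum_{k=1}^m\sum_{l=1}^n s_k t_l\,(\xi_k\di\eta_l)(x_k\ot y_l)\ \in\ H(K\ot_{hil}L).
\]

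The heart of the argument is to recognise the right-hand side as again being of the form treated in Proposition \ref{orth}(i), this time for the pre-Hilbert space $M:=K\ot_{hil}L$, which carries the minimal $L$-norm in accordance with the standing convention of this section. First, since $\iota$ is unitary and the inner product of $H\widehat\ot_{hil}H$ is multiplicative on elementary tensors, $\la\xi_k\di\eta_l,\xi_{k'}\di\eta_{l'}\ra=\la\xi_k,\xi_{k'}\ra\la\eta_l,\eta_{l'}\ra=\de^k_{k'}\de^l_{l'}$, so $(\xi_k\di\eta_l)_{k,l}$ is an orthonormal family in $H$. Second, the inner product of $M$ satisfies $\la x\ot y,x'\ot y'\ra=\la x,x'\ra\la y,y'\ra$, whence $(x_k\ot y_l)_{k,l}$ is an orthonormal family in $M$. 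Therefore Proposition \ref{orth}(i), applied with the scalars $\lm_{(k,l)}:=s_k t_l$, yields $\|\vartheta_\ii(u,v)\|=\max_{k,l} s_k t_l=s_1 t_1=\|u\|\|v\|$; in particular $\|\vartheta_\ii(u,v)\|\le\|u\|\|v\|$, which is exactly the $L$-contractivity of $\vartheta$.

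The only delicate point — and it is a mild one — is the bookkeeping ensuring that the double-indexed systems $(\xi_k\di\eta_l)$ and $(x_k\ot y_l)$ genuinely satisfy the orthonormality hypotheses of Proposition \ref{orth}(i), so that the norm formula there applies verbatim to $\vartheta_\ii(u,v)$; once this is in place, the proof is a one-line computation. (I note in passing that the same argument in fact gives the stronger equality $\|\vartheta_\ii(u,v)\|=\|u\|\|v\|$ for all $u,v$.)
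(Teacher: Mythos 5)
Your proof is correct and follows essentially the same route as the paper's: decompose $u$ and $v$ via Proposition \ref{orth}(ii), observe that $(\xi_k\di\eta_l)$ and $(x_k\ot y_l)$ are again orthonormal systems, and apply Proposition \ref{orth}(i) to get $\|u\di v\|=s_1t_1=\|u\|\|v\|$. The paper states the same equality (not just the inequality), so your parenthetical remark matches the author's argument exactly.
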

\begin{proof}
 As we know, $\vartheta_\ii:HK\times HL\to  H(K\ot_{hil} L)$
takes a pair $(u,v)$ to $u\di v$. By Proposition \ref{orth}(ii), $u$ has the form 
$\sum_{k=1}^ns_k\xi_kx_k$ with the mentioned properties, and $v$ has the form 
$\sum_{l=1}^ms_l'\eta_ly_l$ with similar properties. Consequently, 
$$
u\di v=\sum_{k=1}^n\sum_{l=1}^ms_ks_l'(\xi_k\di\eta_l)(x_k\ot y_l),
$$
where the systems $\xi_k\di\eta_l$ and $x_k\ot y_l$ are orthonormal in $H$ and $K\ot_{hil} L$, 
respectively. Therefore, by Proposition \ref{orth}(i), $\|u\di v\|=s_1s_1'=\|u\|\|v\|$. 
\end{proof}
\begin{theorem} \label{theo4} 
Let $K$ and $L$ be pre--Hilbert spaces. Then we have $K\ot_l L=K\ot_{hil} L$ and $K\widehat\ot_l 
L=K\widehat\ot_{hil} L$. Both equalities are up to an $L$--isometric  isomorphism, well defined by 
taking an elementary tensor $x\ot y$ to the same $x\ot y$, but considered in $K\ot_{hil} L$ and 
$K\widehat\ot_{hil} L$, respectively. 
\end{theorem}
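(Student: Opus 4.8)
The plan is to prove $K\ot_l L=K\ot_{hil}L$ by showing the two relevant seminorms on $H(K\ot L)$ agree, and then transfer the completed statement by Proposition \ref{context}. By Proposition \ref{lcon}, the canonical bilinear operator $\vartheta\colon K\times L\to K\ot_{hil}L$ is $L$-contractive; since $K\ot_{hil}L$ is an $L$-space (Hilbert space with its Hilbert $L$-norm), Theorem \ref{exlam} together with Proposition \ref{comd2} gives a canonical $L$-contractive linear operator $I\colon K\ot_l L\to K\ot_{hil}L$ acting as the identity on the common underlying linear space $K\ot L$. So it remains to prove that $I_\ii$ does not decrease norms, i.e. that for every $U\in H(K\ot L)$ one has $\|U\|_l\le\|I_\ii(U)\|$, where the right-hand side is the Hilbert $L$-norm computed via the identification $H(K\ot L)=H\ot_{hil}(K\widehat\ot_{hil}L)$.

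Next I would estimate $\|U\|_l$ from above by producing a good representation of $U$ of the type guaranteed by Proposition \ref{repr3}. The natural move is to invoke Proposition \ref{orth}(ii) applied to the Hilbert space $K\ot_{hil}L$ (or rather its completion): write $I_\ii(U)=\sum_{k=1}^n s_k\,\zeta_k\,w_k$ with $\zeta_k$ orthonormal in $H$, $w_k$ orthonormal in $K\widehat\ot_{hil}L$, and $s_1\ge\cdots\ge s_n>0$, so that $\|I_\ii(U)\|=s_1$ by Proposition \ref{orth}(i). The difficulty is that the $w_k$ need not be elementary tensors, so this is not yet a representation of the form $a\cd(\sum u_k\di v_k)$. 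To handle this I would instead expand $U$ directly: fix orthonormal bases, write $U=\sum_{i,j}\zeta_{ij}\,(e_i\ot f_j)$ with $\zeta_{ij}\in H$ (only finitely many nonzero), and then use that each $e_i\ot f_j=\vartheta(e_i,f_j)$ together with the diamond identities to reorganize. Concretely, pick an isometry $S\in\bb$ and elements $\xi_i=S e_i$-type vectors so that $e_i\ot f_j\di$-combinations reproduce $\zeta_{ij}$; the cleanest route is: choose $a\in\bb$ and orthonormal $\xi_1,\dots$ in $H$ with $a(\xi_i\di\xi_j)=\zeta_{ij}$ for the relevant pairs, giving $U=a\cd\big(\sum_{i,j}(\xi_i e_i)\di(\xi_j f_j)\big)$, where the vectors $\xi_i e_i\in HK$ have pairwise orthogonal supports $\xi_i\circ\xi_i$. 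Then $\|U\|_l\le\|a\|\big(\sum_{i,j}\|\xi_i e_i\|^2\|\xi_j f_j\|^2\big)^{1/2}=\|a\|\big(\sum_{i,j}\|e_i\|^2\|f_j\|^2\big)^{1/2}$, and by the correspondence between $a$ and the ``matrix'' $(\zeta_{ij})$ this Hilbert--Schmidt-type quantity should collapse, after choosing the bases adapted to the singular value decomposition of $I_\ii(U)$, to exactly $s_1=\|I_\ii(U)\|$.

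I expect the main obstacle to be precisely this matching of the operator norm $\|a\|$ with the top singular value $s_1$: one must choose the auxiliary orthonormal system in $H$ and the scalar coefficients so that the operator $a$ implementing $U=a\cd(\sum u_k\di v_k)$ has operator norm no larger than $s_1$ while the sum $\sum\|u_k\|^2\|v_k\|^2$ stays bounded by $1$ (after normalization). The right bookkeeping is to diagonalize: by Proposition \ref{orth}(ii) applied in the Hilbert space picture, choose orthonormal $x_k\in K$, $y_l\in L$ so that $U$ lives in the span of the $x_k\ot y_l$ with coefficient vectors forming, up to the unitary $\iota$, a rank-one-dominated configuration; then take $u_k=\xi_k x_k$, $v_k=\xi_k y_k$ with $\{\xi_k\}$ orthonormal and $a$ the operator sending $\xi_k\di\xi_l$ to the coefficient of $x_k\ot y_l$. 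The $C^*$-identity computation, entirely analogous to the one in Proposition \ref{sR} and Proposition \ref{triang}, then yields $\|a\|\le s_1$. Once $\|U\|_l\le\|I_\ii(U)\|$ is established, combined with the reverse inequality already provided by $L$-contractivity of $I$, we get that $I$ is an $L$-isometric isomorphism; Proposition \ref{context} upgrades it to the completed statement $K\widehat\ot_l L=K\widehat\ot_{hil}L$, finishing the proof.
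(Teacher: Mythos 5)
Your overall skeleton matches the paper's: the easy direction via Proposition \ref{lcon} and the universal property, the reduction to showing $\|U\|_l\le\|I_\ii(U)\|$, the expansion $U=\sum_{i,j}\zeta_{ij}(e_i\ot f_j)$ over orthonormal systems, and the identification of $\|I_\ii(U)\|$ with an operator norm via Proposition \ref{isom}. The genuine gap is in how you feed your representation into the definition (\ref{inf2}) of $\|\cdot\|_l$. First, the family indexed by pairs $(i,j)$ with first factors $\xi_i e_i$ does \emph{not} have pairwise orthogonal supports: for fixed $i$ and varying $j$ the first factor is the same element $\xi_i e_i$, so the hypothesis of Proposition \ref{repr3} fails and (\ref{inf2}) does not apply to that representation as written. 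Second, even granting it, your bound is $\|a\|\bigl(\sum_{i,j}\|\xi_i e_i\|^2\|\xi_j f_j\|^2\bigr)^{1/2}=\|a\|\sqrt{nm}$, which exceeds $\|I_\ii(U)\|$ by the factor $\sqrt{nm}$; no choice of bases makes this ``collapse'', and the SVD route you propose is blocked for exactly the reason you yourself note: the singular vectors of $I_\ii(U)$ in $K\widehat\ot_{hil}L$ need not be elementary tensors, and your final ``diagonal'' representation $u_k=\xi_k x_k$, $v_k=\xi_k y_k$ only produces elements whose coefficient array is supported on the diagonal, not a general $U$.

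The missing idea --- and the crux of the paper's proof --- is that your double sum is a \emph{single} diamond product: by bilinearity of $\di$ one has $\sum_{i,j}(\xi_i e_i)\di(\xi_j f_j)=u\di v$ with $u:=\sum_i\xi_i e_i$ and $v:=\sum_j\xi_j f_j$, and by Proposition \ref{orth}(i) each of $u,v$ has (minimal) $L$--norm exactly $1$. Hence $U=a\cd(u\di v)$ is a one-term representation of the form required by Proposition \ref{repr3}, and (\ref{inf2}) gives $\|U\|_l\le\|a\|\,\|u\|\,\|v\|=\|a\|$. Taking for $a$ the operator that sends the orthonormal system $\xi_i\di\xi_j$ to $\zeta_{ij}$ and vanishes on the orthogonal complement of its span, its norm equals the norm of the finite-rank operator $\sum_{i,j}\zeta_{ij}\circ(e_i\ot f_j)$ (both send an orthonormal basis to the same vectors), which by Proposition \ref{isom} is exactly $\|I_\ii(U)\|$; no singular value decomposition and no $C^*$-identity computation are needed. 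With that correction your argument closes, and the completed statement follows from Proposition \ref{context} as you say.
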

\begin{proof}
Since $\vartheta$ from the previous proposition has values in an $L$--space, it gives rise 
 to the $L$--contractive operator $R: K\ot_{l} L\to K\ot_{hil} L$, which is the identity map of the 
 underlying linear spaces. Our task is to show that it is an $L$--isometric isomorphism.

Take $U\in H(K\ot_{l}L)$. Since it is the sum of several elementary tensors of the form $\xi(x\ot 
y)$, it easily follows that $U$ can be represented as $\sum_{k=1}^n\sum_{l=1}^{m}\xi_{kl}(x_{k}\ot 
y_{l})$, where $x_k$ and $y_l$ are orthonormal systems in $K$ and $L$, respectively, and 
$\xi_{kl}\in H$. Applying to $R_\ii(U)\in H(K\ot_{hil} L)$ Proposition \ref{isom}, we see that 
$\|R_\ii(U)\|$ is the norm of the operator $S:=\sum_{k=1}^n\sum_{l=1}^{m}\xi_{kl}\circ(x_k\ot 
y_l):(K\ot_{hil} L)^{cc}\to H$. Set $M:=span\{x_k\ot y_l\}\subset(K\ot_{hil} L)^{cc}$. Since $\dim 
M<\ii$, the pre--Hilbert space $(K\mmdd L)^{cc}$ decomposes as $M\oplus M^\perp$, and $S$ takes 
$M^\perp$ to 0. Therefore $\|S\|=\|S_0\|$, where $S_0$ is the restriction of $S$ to $M$. 
Thus, we have 
\begin{align} \label{end1}
\|R_\ii(U)\|=\|S_0\|.
\end{align}
 Now return to our initial $U$. Choose arbitrary orthonormal systems $\eta_k; k=1,...,n$ and 
$\zeta_l;l=1,...,m$ in $H$. Set $u:=\sum_{k=1}^n\eta_kx_k\in HK$ and 
$v:=\sum_{l=1}^{m}\zeta_ly_l\in HL$. We see that $u\di 
v=\sum_{k=1}^n\sum_{l=1}^{m}(\eta_k\di\zeta_l)(x_k\ot y_l)$. 
Consider the finite rank operator 
$$
T:=\sum_{k=1}^n\sum_{l=1}^{m}\xi_{kl}\circ(\eta_k\di\zeta_l):H\to H.
$$
An easy calculation shows that $T\cd(u\di v)=U$. Further, as a particular case of Proposition 
\ref{isom}, $\|u\|=\|v\|=1$. Finally, if we set $N:=span\{\eta_k\di\zeta_l\}$ and denote by $T_0$ 
the restriction of $T$ to $N$, we obviously have $\|T\|=\|T_0\|$. Combining this with (\ref{inf2}), 
we obtain that 
\begin{align} \label{end2}
\|U\|_l\le\|T_0\|.
\end{align}
The systems $\{x_k\ot y_l\}$ and $\{\eta_k\di\zeta_l\}$ are orthonormal bases in $M$ and $N$, 
respectively, and $S_0(x_k\ot y_l)=\xi_{kl}=T_0(\eta_k\di\zeta_l)$. It follows that 
$\|T_0\|=\|S_0\|$. Combining this with (\ref{end1}) and (\ref{end2}), and remembering that $R_\ii$ 
is contractive, we obtain that $\|U\|_l=\|R_\ii(U)\|$. 

 This gives the first of $L$--isometric isomorphisms, claimed in the theorem. The extension of the latter
by continuity provides the second $L$--isometric isomorphism. 
\end{proof}
\section{Comparison of both tensor products}
In conclusion, we want to compare $PL$-- and $L$--tensor products. Since the class of $PL$-spaces 
is larger than that of $L$-spaces, it immediately follows from the definition of both tensor 
products in terms of their universal properties, that $\|\cd\|_{pl}\ge\|\cd\|_{l}$. We shall show 
that the first number is sometimes essentially greater than the second number. 

Endow the space $\ell_2$ with the Hilbert $L$--norm (see above), and the space $\ell_1=L_1({\mathbb 
N})$ with the $PL$-norm from Example \ref{ex3}. 
\begin{proposition} \label{con6}
    The bilinear operator ${\cal M}:\ell_2\times \ell_2\to \ell_1$, acting as the coordinate-wise multiplication, is $L$-contractive.
\end{proposition}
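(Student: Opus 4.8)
The plan is to verify directly that the amplification $\mathcal{M}_\ii \colon H\ell_2 \times H\ell_2 \to H\ell_1$ is contractive, i.e. that $\|\mathcal{M}_\ii(u,v)\| \le \|u\|\|v\|$ for all $u \in H\ell_2$ and $v \in H\ell_2$. Here the norm on $H\ell_2$ is the Hilbert $L$--norm of Theorem~\ref{theo4}, and the norm on $H\ell_1 = H(L_1(\mathbb N))$ is the one from Example~\ref{ex3}; in particular, by the last line of that example, $\|w\| = \left(\sum_{j}\|w(j)\|^2_H\right)^{1/2}$ for the $H$--valued sequence $w = \alpha_1(\mathcal{M}_\ii(u,v))$, where I write $\alpha_1 \colon H\ell_1 \to L_1(\mathbb N, H)$ for the isometric map $\alpha$ of Example~\ref{ex3} applied to $L_1(\mathbb N)$. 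Wait — $L_1(\mathbb N, H)$ carries the $\ell_1$--sum of $H$--norms, not the $\ell_2$--sum; so in fact $\|w\| = \sum_j \|w(j)\|_H$. I will use this $\ell_1$--sum description throughout.

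First I would set up coordinates. Using Proposition~\ref{orth}(ii) applied to the Hilbert $L$--space $\ell_2$, write $u = \sum_{k=1}^n s_k\,\xi_k e_{i_k}$ and $v = \sum_{l=1}^m s'_l\,\eta_l e_{j_l}$, where $(e_i)$ is the standard basis of $\ell_2$, the $\xi_k$ (resp. $\eta_l$) are orthonormal in $H$, the $e_{i_k}$ (resp. $e_{j_l}$) are orthonormal in $\ell_2$ (so the indices $i_k$ are distinct, and likewise the $j_l$), and $s_1 \ge \dots \ge s_n > 0$, $s'_1 \ge \dots \ge s'_m > 0$. By Proposition~\ref{orth}(i) we have $\|u\| = s_1$ and $\|v\| = s'_1$. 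Now apply $\mathcal{M}_\ii$: on elementary tensors $\mathcal{M}_\ii(\xi e_i, \eta e_j) = (\xi \di \eta)(e_i \cdot e_j)$, and $e_i \cdot e_j$ (coordinate-wise product in $\ell_1$) equals $e_i$ if $i=j$ and $0$ otherwise. Hence
\[
\mathcal{M}_\ii(u,v) = \sum_{k,l \,:\, i_k = j_l} s_k s'_l\,(\xi_k \di \eta_l)\, e_{i_k} \in H\ell_1.
\]
For each index $i$ there is at most one pair $(k,l)$ with $i_k = j_l = i$ (since the $i_k$ are distinct and the $j_l$ are distinct); let $K$ be the set of such matched pairs. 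Passing to $L_1(\mathbb N, H)$ via $\alpha_1$, the $i_k$-th coordinate of the resulting $H$--valued sequence is $s_k s'_l\,(\xi_k \di \eta_l)$ for $(k,l) \in K$, and $0$ elsewhere.

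The key computation is then
\[
\|\mathcal{M}_\ii(u,v)\| = \sum_{(k,l)\in K} \|s_k s'_l\,(\xi_k \di \eta_l)\|_H = \sum_{(k,l)\in K} s_k s'_l,
\]
using $\|\xi_k \di \eta_l\| = \|\xi_k\|\|\eta_l\| = 1$ from~(\ref{gooddi}). Since each $k$ appears in at most one pair of $K$ and each $l$ in at most one pair, the index map on $K$ is injective in each coordinate; so $\sum_{(k,l)\in K} s_k s'_l \le \left(\sum_k s_k^{?}\right)\cdots$ — no, the clean bound is: for matched pairs, $s_k s'_l \le s_1 s'_l$ and summing over the (distinct) second coordinates gives $\sum_{(k,l)\in K} s_k s'_l \le s_1 \sum_{l : (k,l)\in K} s'_l$. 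But the $s'_l$ with $l$ ranging over all of $1,\dots,m$ need not sum to $s'_1$! This is exactly where care is needed: we must exploit that $u,v \in \ell_2$ (so $\sum_k s_k^2$ and $\sum_l (s'_l)^2$ are the squared $\ell_2$--norms of the underlying vectors, not controlled by $s_1, s'_1$), yet the target norm is the $\ell_1$--sum. So I expect the naive pointwise estimate to fail, and the real argument must proceed differently — presumably by applying $\mathcal{M}_\ii$ to suitably normalized $u,v$ and invoking the Cauchy–Schwarz inequality $\sum_{(k,l)\in K} s_k s'_l \le \left(\sum_{(k,l)\in K} s_k^2\right)^{1/2}\left(\sum_{(k,l)\in K}(s'_l)^2\right)^{1/2} \le \|x\|_{\ell_2}\,\|y\|_{\ell_2}$ where $x = \sum_k s_k e_{i_k}$, $y = \sum_l s'_l e_{j_l}$ are the underlying vectors of $u, v$. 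But $\|x\|_{\ell_2} = \|u\|$? No — $\|u\|$ is the Hilbert $L$--norm $= s_1$, whereas $\|x\|_{\ell_2} = (\sum_k s_k^2)^{1/2}$. These differ. Hence the statement as I am reading it cannot hold with the $\ell_1$--norm on the target, so the intended $PL$--norm on $\ell_1 = L_1(\mathbb N)$ from Example~\ref{ex3} must, when combined with the amplification, actually reduce the $\ell_1$--sum to something the Cauchy–Schwarz bound can handle; re-examining Example~\ref{ex3}, the map $\alpha$ lands in $L_p(X, HF)$ with $p=1$, so indeed $\ell_1$--sum. The main obstacle, then, is reconciling these norms: I would resolve it by observing that the matched structure forces, for each fixed index $i$, only one term, so $\mathcal{M}_\ii(u,v)$ has ``diagonal'' support, and then the genuine bound comes from writing $u \di v$ as $T \cd (u' \di v')$ for a contraction $T$ (mimicking the proof of Proposition~\ref{con6}'s neighbor, Theorem~\ref{theo4}) — but I will defer the precise identification of $T$ to the author's proof, flagging it as the crux.
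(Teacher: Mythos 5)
There is a genuine gap, and its source is a misidentification of the norm on $H\ell_2$. In Section 8 the space $\ell_2$ carries the \emph{Hilbert} $L$--norm, i.e.\ the one obtained by identifying $H\ell_2$ with $H\ot_{hil}\ell_2\cong\ell_2(H)$, so that $\|u\|=\bigl(\sum_n\|\xi_n\|_H^2\bigr)^{1/2}$ for the $H$--valued sequence corresponding to $u$. This is \emph{not} the minimal $L$--norm of Section 7: your appeal to Proposition \ref{orth} gives $\|u\|=s_1=\max_k s_k$, which is the injective (operator) norm of ${\cal I}(u)$, strictly smaller in general than the Hilbert--Schmidt quantity $\bigl(\sum_k s_k^2\bigr)^{1/2}$ that is actually relevant here. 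Your own computation then correctly shows that with the minimal norm the inequality fails (take $u=v=\sum_{k=1}^n\xi_k e_k$ with orthonormal systems: minimal norms are $1$ but the image has $\ell_1(H)$--norm $n$); this should have been the signal to recheck which quantization of $\ell_2$ the proposition uses, rather than to look for a compensating mechanism on the $\ell_1$ side. As it stands, the proposal does not prove the statement: it ends by explicitly deferring ``the precise identification of $T$'' to the author, so the crux is missing.

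Once the norm is corrected, the proof is exactly the Cauchy--Schwarz argument you half-wrote down. The paper transports everything through the isometries $I:H\ell_2\to\ell_2(H)$ and $\al:H\ell_1\to\ell_1(H)$ (both sending $\xi\widetilde\lm$ to $(\dots,\lm_n\xi,\dots)$), under which ${\cal M}_\ii$ becomes the coordinatewise diamond map ${\cal S}:(\widetilde\xi,\widetilde\eta)\mt(\dots,\xi_n\di\eta_n,\dots)$, and then
\[
\sum_n\|\xi_n\di\eta_n\|=\sum_n\|\xi_n\|\,\|\eta_n\|\le
\Bigl(\sum_n\|\xi_n\|^2\Bigr)^{1/2}\Bigl(\sum_n\|\eta_n\|^2\Bigr)^{1/2}
=\|u\|\,\|v\|.
\]
No singular-value decomposition, no auxiliary contraction $T$, and no ``diagonal support'' analysis is needed; the $\ell_2$--sum on the source and the $\ell_1$--sum on the target are reconciled by Cauchy--Schwarz alone.
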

\begin{proof}
Our task is to show that the bilinear operator ${\cal M}_\ii:H\ell_2\times H\ell_2\to H\ell_1$ is 
contractive. Consider the isometric operators $I:H\ell_2=H\ot_{hil} \ell_2\to \ell_2(H)$ and 
$\al:H\ell_1\to \ell_1(H)$; both are well defined by taking an elementary tensor 
$\xi\widetilde\lm;\widetilde\lm=(\dots,\lm_n,\dots)$ to $(\dots,\lm_n\xi,\dots)$. Consider the 
diagram 
\[
\xymatrix@C+20pt{H\ell_2\times H\ell_2 \ar[r]^{I\times I}\ar[d]_{{\cal M}_\ii}
& \ell_2(H)\times \ell_2(H) \ar[d]^{{\cal S}} \\
H\ell_1 \ar[r]^{\al} & \ell_1(H) },
\]
\noindent where ${\cal S}$ takes a pair 
$(\widetilde\xi:=(\dots,\xi_n,\dots),\widetilde\eta:=(\dots,\eta_n,\dots))$ to 
$(\dots,\xi_n\di\eta_n,\dots)$. Since the Cauchy-Schwarz inequality implies that $\|{\cal 
S}(\widetilde\xi,\widetilde\eta)\|\le \|\widetilde\xi\|\|\widetilde\eta\|$, the latter sequence 
indeed belongs to $\ell_1(H)$, and, moreover, ${\cal S}$ is contractive. 

Now observe that our diagram, as one can easily verify on elementary tensors, is commutative. 
Consequently, for $u,v\in H\ell_2$ we have $\|{\cal M}_\ii(u,v)\|\le\|u\|\|v\|$. 
\end{proof}
Have a look at the $PL$-- and $L$--tensor square of the same Hilbert $L$--space $\ell_2$. Fix 
$n\in{\Bbb N}$, denote by ${\bf p}^m; m=1,2,...$ 
sequences $(\dots,0,1,0,\dots)\ell_2$ and choose an arbitrary orthonormal system, say 
$e_1,e_2,\dots$, in $H$. In what follows, we set 
\[ 
V:=\sum_{k=1}^ne_k({\bf p}^k\ot{\bf p}^k)\in H(\ell_2\ot 
\ell_2). 
\]
It obviously can be presented as 
\begin{align} \label{presV}
V=S\cd\sum_{k=1}^nu_k\di v_k,
\end{align}
where $u_k=v_k=e_k{\bf p}^k$, and $S=\sum_{k=1}^ne_k\circ(e_k\di e_k)$.
%
\begin{proposition} \label{n}
 We have $\|V\|_{pl}=n$.
\end{proposition}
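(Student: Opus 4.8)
The plan is to prove $\|V\|_{pl} = n$ by establishing the two inequalities separately.

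\medskip
\noindent\textbf{Upper bound $\|V\|_{pl}\le n$.}
First I would use the representation (\ref{presV}), namely $V = S\cdot\sum_{k=1}^n u_k\di v_k$ with $u_k = v_k = e_k{\bf p}^k$ and $S = \sum_{k=1}^n e_k\circ(e_k\di e_k)$. By the definition (\ref{inf}) of $\|\cdot\|_{pl}$ as an infimum over such representations, we get $\|V\|_{pl}\le \|S\|\sum_{k=1}^n\|u_k\|\|v_k\|$. Here $\|u_k\| = \|e_k{\bf p}^k\| = \|e_k\|\|{\bf p}^k\| = 1$ (and likewise for $v_k$) using the basic property $\|\xi x\| = \|\xi\|\|x\|$ of $PL$-quantizations together with the fact that ${\bf p}^k$ has norm $1$ in $\ell_2$. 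The operator $S$ maps the orthonormal system $\{e_k\di e_k\}$ (orthonormal by (\ref{gooddi})) to the orthonormal system $\{e_k\}$ and kills the orthogonal complement; hence $\|S\| = 1$. Therefore $\|V\|_{pl}\le 1\cdot\sum_{k=1}^n 1 = n$.

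\medskip
\noindent\textbf{Lower bound $\|V\|_{pl}\ge n$.}
This is the main obstacle, since it requires ruling out all cheaper representations. The natural strategy is to exhibit an $L$-bounded bilinear operator $\rr:\ell_2\times\ell_2\to G$ into some $PL$-space $G$ with $\|\rr\|_{lb}$ small but $\|\rr_\ii(\ldots)\|$ detecting $V$ with a large norm; then Proposition \ref{comdiag} (that the linearization $R$ of $\rr$ satisfies $\|R\|_{lb} = \|\rr\|_{lb}$) forces $\|V\|_{pl}\ge \|R_\ii(V)\|/\|\rr\|_{lb} = \|\rr_\ii(\text{pair})\|/\|\rr\|_{lb}$. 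The obvious candidate is $\rr := {\cal M}:\ell_2\times\ell_2\to\ell_1$, the coordinatewise multiplication, which by Proposition \ref{con6} is $L$-contractive, so $\|{\cal M}\|_{lb}\le 1$. Its linearization $M:\ell_2\ot_{pl}\ell_2\to\ell_1$ satisfies $\|M\|_{lb}\le 1$, hence $\|V\|_{pl}\ge\|M_\ii(V)\|$. Now $M_\ii$ sends the elementary tensor $e_k({\bf p}^k\ot{\bf p}^k)$ to $e_k\,{\cal M}({\bf p}^k,{\bf p}^k) = e_k{\bf p}^k$ (viewed in $\ell_1$), so $M_\ii(V) = \sum_{k=1}^n e_k{\bf p}^k\in H\ell_1$. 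It remains to compute $\|\sum_{k=1}^n e_k{\bf p}^k\|$ in the $PL$-norm on $\ell_1 = L_1(\mathbb N)$. Using the isometry $\alpha:H\ell_1\to\ell_1(H)$ from Example \ref{ex3} (which sends $\xi\widetilde\lm$ to the sequence $(\lm_m\xi)_m$), the image of $\sum_{k=1}^n e_k{\bf p}^k$ is the sequence whose $k$-th entry is $e_k$ for $k\le n$ and $0$ otherwise; its $\ell_1(H)$-norm is $\sum_{k=1}^n\|e_k\| = n$. Hence $\|M_\ii(V)\| = n$, giving $\|V\|_{pl}\ge n$.

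\medskip
\noindent Combining the two bounds yields $\|V\|_{pl} = n$. The only subtlety to watch is making sure the $\alpha$-computation is correct — that the $PL$-norm on $\ell_1$ really is the one from Example \ref{ex3} and that $\alpha$ is isometric onto its range with the stated action on elementary tensors — but this is exactly what Example \ref{ex3} provides, so no real difficulty arises there.
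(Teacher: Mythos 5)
Your proof is correct and follows essentially the same route as the paper: the upper bound comes from the representation (\ref{presV}) via the infimum formula (\ref{inf}), and the lower bound comes from applying the $L$-contractive linearization $M$ of the coordinatewise multiplication $\mathcal{M}$ of Proposition \ref{con6} and computing $\|M_\ii(V)\|=n$ in $H\ell_1$. The only difference is that you spell out the computations of $\|S\|=1$ and of the $\ell_1(H)$-norm via $\al$, which the paper leaves implicit.
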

\begin{proof}
Consider the operator $M:\ell_2\ot_{pl} \ell_2\to \ell_1$, associated with ${\cal M}$ from the 
previous proposition. Then it is $L$--contractive together with the latter: in particular, 
$\|M_\ii(V)\|\le\|V\|_{pl}$. But we obviously have $M_\ii(V)=\sum_{k=1}^ne_k({\bf p}^k)$; 
therefore, since we are in $L\ell_1$, we have $\|M_\ii(V)\|=n$. Consequently, $\|V\|_{pl}\ge n$. On 
the other hand, it follows from (\ref{presV}) that 
$\|V\|_{pl}\le\sum_{k=1}^n\|S\|\|u_k\|\|v_k\|=n$. 
\end{proof}
\begin{proposition} \label {sqr}
 {\rm (At the same time)} we have $\|V\|_l=\sqrt{n}$.
\end{proposition}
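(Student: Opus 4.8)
The plan is to establish the two inequalities $\|V\|_l\le\sqrt n$ and $\|V\|_l\ge\sqrt n$ separately, since neither is provided by a general theorem already available (Theorem~\ref{theo4} concerns the \emph{minimal} $L$--norm on pre--Hilbert spaces, whereas here $\ell_2$ carries the Hilbert $L$--norm, a genuinely different quantization).

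First I would obtain $\|V\|_l\le\sqrt n$ by feeding the explicit representation (\ref{presV}) straight into the defining infimum (\ref{inf2}). The first components of the elements $u_k=v_k=e_k{\bf p}^k$ have the pairwise orthogonal supports $e_k\circ e_k$, so (\ref{presV}) is of the type admitted by Proposition~\ref{repr3} and hence counts in the infimum. One has $\|u_k\|=\|v_k\|=\|e_k\|\,\|{\bf p}^k\|=1$, while the operator $S=\sum_{k=1}^ne_k\circ(e_k\di e_k)$ carries the orthonormal system $\{e_k\di e_k\}_{k=1}^n$ (orthonormal because $\iota$ is unitary and $\{e_k\ot e_k\}$ is orthonormal in $H\widehat\ot_{hil}H$, cf. (\ref{gooddi})) onto the orthonormal system $\{e_k\}_{k=1}^n$ and annihilates the orthogonal complement, so $\|S\|=1$. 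Thus (\ref{inf2}) gives $\|V\|_l\le\|S\|\bigl(\sum_{k=1}^n\|u_k\|^2\|v_k\|^2\bigr)^{1/2}=\sqrt n$.

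For the reverse inequality I would test $V$ against a suitable $L$--space using the universal property of $\ell_2\ot_l\ell_2$, and here lies the one real obstacle: one must \emph{not} use the $L$--contractive bilinear operator ${\cal M}\colon\ell_2\times\ell_2\to\ell_1$ of Proposition~\ref{con6}, because $\ell_1$ is not an $L$--space, so Proposition~\ref{comd2} does not apply to it — that route would spuriously produce the value $n$ (and is exactly why $\|V\|_{pl}=n$ while $\|V\|_l$ is smaller). Instead I would use the canonical bilinear operator $\vartheta\colon\ell_2\times\ell_2\to\ell_2\widehat\ot_{hil}\ell_2$, all three spaces carrying the Hilbert $L$--norm. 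The space $\ell_2\widehat\ot_{hil}\ell_2$ is then an $L$--space, with $H(\ell_2\widehat\ot_{hil}\ell_2)=H\widehat\ot_{hil}\ell_2\widehat\ot_{hil}\ell_2$, so that orthogonal supports force orthogonal vectors; and $\vartheta$ is $L$--contractive — its amplification sends $(u,v)$ to $u\di v$, and since $\iota$ identifies $H\widehat\ot_{hil}H$ with $H$ unitarily one checks on elementary tensors that $\|u\di v\|=\|u\|\,\|v\|$ (the Hilbert--$L$--norm counterpart of Proposition~\ref{lcon}).

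Then Proposition~\ref{comd2} yields an $L$--contractive operator $R\colon\ell_2\ot_l\ell_2\to\ell_2\widehat\ot_{hil}\ell_2$ acting as the identity on the common underlying linear space, so $R_\ii(V)=\sum_{k=1}^ne_k({\bf p}^k\ot{\bf p}^k)\in H\widehat\ot_{hil}\ell_2\widehat\ot_{hil}\ell_2$. Since $\{e_k\}_{k=1}^n$ and $\{{\bf p}^k\ot{\bf p}^k\}_{k=1}^n$ are orthonormal systems in $H$ and in $\ell_2\widehat\ot_{hil}\ell_2$ respectively, the vectors $e_k({\bf p}^k\ot{\bf p}^k)$ form an orthonormal system, whence $\|R_\ii(V)\|=\sqrt n$; as $R$ is $L$--contractive this gives $\|V\|_l\ge\|R_\ii(V)\|=\sqrt n$. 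Combining with the upper bound, $\|V\|_l=\sqrt n$.
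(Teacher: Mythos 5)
Your proof is correct, and the upper bound $\|V\|_l\le\sqrt n$ is exactly the paper's: plug the representation (\ref{presV}) into (\ref{inf2}), using that the $u_k=e_k{\bf p}^k$ have pairwise orthogonal supports and $\|S\|=1$. For the lower bound, however, you take a genuinely different route. The paper stays with coordinatewise multiplication: it takes the operator ${\cal N}:\ell_2\times\ell_2\to\ell_2$ acting as ${\cal M}$ of Proposition \ref{con6} but with range $\ell_2$, observes that passing from $H\ell_1$ to $H\ell_2$ can only decrease norms so ${\cal N}$ inherits $L$--contractivity from ${\cal M}$, and then, since $\ell_2$ (unlike $\ell_1$) is an $L$--space, applies Proposition \ref{comd2} to get $\|V\|_l\ge\|N_\ii(V)\|=\|\sum_k e_k{\bf p}^k\|=\sqrt n$. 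You instead test $V$ against the canonical bilinear operator into $\ell_2\ot_{hil}\ell_2$ with the Hilbert $L$--norm, proving directly (from unitarity of $\iota$, i.e.\ a Hilbert--$L$--norm analogue of Proposition \ref{lcon}) that $\|u\di v\|=\|u\|\|v\|$, and then evaluate $\|R_\ii(V)\|=\sqrt n$ on the orthonormal system $e_k({\bf p}^k\ot{\bf p}^k)$. Both arguments are sound and of comparable length in the abstract; the paper's version is shorter inside its own development because it recycles Proposition \ref{con6} (the Cauchy--Schwarz computation) already proved for $\|V\|_{pl}=n$, whereas yours is self-contained on the Hilbert side, avoids the $\ell_1$ detour entirely, and makes transparent why the answer is $\sqrt n$: the relevant target is a Hilbert space in which the $n$ terms are orthonormal. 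Your cautionary aside about not mapping into $\ell_1$ is also consistent with the paper, which is precisely why it replaces ${\cal M}$ by ${\cal N}$.
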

\begin{proof}
Consider the bilinear operator ${\cal N}:\ell_2\times \ell_2\to \ell_2$, acting as ${\cal M}$, but 
with the other range. Since the norm of an element in 
 $H\ell_1$ can only decrease, if we shall consider this element in $H\ell_2$, our  ${\cal N}$ is 
 $L$--contractive together with ${\cal M}$. But $\ell_2$ (contrary to $\ell_1$!) is an $L$--space; 
 therefore the operator
$N:\ell_2\ot_{l} \ell_2\to \ell_2$, associated with ${\cal N}$, is also $L$--contractive. In 
particular, $\|N_\ii(V)\|\le\|V\|_{l}$. Of course, $N_\ii(V)$ is the same $\sum_{k=1}^ne_k({\bf 
p}^k)$ as in the previous proposition. However, since now we are in $H\ell_2= \\ H\ot_{hil} 
\ell_2$, we have $\|N_\ii(V)\|=\sqrt{n}$, and hence $\|V\|_{l}\ge\sqrt{n}$. 

On the other hand, since the elements $u_k\in H\ell_2$ form an orthonormal system, we obtain, by 
(\ref{presV}), that $\|V\|_{l}\le\|S\|(\sum_{k=1}^n\|u_k\|^2\|v_k\|^2)^{\frac{1}{2}}=\sqrt{n}$. 
\end{proof}

Nevertheless, despite $PL$-- and $L$--tensor products of the same $PL$-spaces usually have 
essentially different norms, their underlying spaces coincide: 
\begin{proposition}
Let $E$ and $F$ be $PL$--spaces. Then the identity operator on the linear space $E\ot F$ is an 
isometric isomorphism, being considered as an operator between underlying spaces of $PL$--spaces 
$E\ot_{pl}F$ and $E\ot_l F$. 
\end{proposition}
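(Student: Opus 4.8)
The plan is to show that the two $PL$-norms $\|\cd\|_{pl}$ and $\|\cd\|_l$ on $H(E\ot F)$ induce the same norm on the underlying space, i.e.\ that $\|\xi(x\ot y)\|_{pl}=\|\xi(x\ot y)\|_l$ for a unit vector $\xi\in H$ and all $x\in E$, $y\in F$. Since a norm on the underlying space of a $PL$-space $G$ is computed as $\|w\|=\|\xi w\|_G$ for any unit $\xi$, and this value is independent of the choice of unit $\xi$, it suffices to compare the two quantities $\|\xi(x\ot y)\|_{pl}$ and $\|\xi(x\ot y)\|_l$ for one fixed unit vector $\xi$.

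The ``$\ge$'' direction is already available: the general inequality $\|\cd\|_{pl}\ge\|\cd\|_l$ was established at the start of Section~8 (it follows at once from the universal properties, since every normed $L$-space is a normed $PL$-space, so the identity map $E\ot_{pl}F\to E\ot_l F$ is $L$-contractive). Hence $\|\xi(x\ot y)\|_{pl}\ge\|\xi(x\ot y)\|_l$, and the induced underlying norm coming from $E\ot_{pl}F$ dominates that coming from $E\ot_l F$. So the real content is the reverse inequality on elementary tensors $w=x\ot y$.

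For the reverse inequality I would argue via duality, using the functionals constructed in Section~3. Given $x\in E$, $y\in F$ with $\|x\|,\|y\|\le1$, choose bounded functionals $f\colon E\to\co$, $g\colon F\to\co$ with $\|f\|,\|g\|\le1$ and $f(x)$, $g(y)$ close to $\|x\|$, $\|y\|$; by Proposition~\ref{autcombil} the bilinear functional $f\times g$ is $L$-bounded with $\|f\times g\|_{lb}=\|f\|\|g\|\le1$. Since $\co$ is an $L$-space (indeed it is the simplest one), $f\times g$ factors through the $L$-tensor product $E\ot_l F$ as an $L$-bounded operator $R\colon E\ot_l F\to\co$ with $\|R\|_{lb}=\|f\times g\|_{lb}\le1$ by Proposition~\ref{comd2}. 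Then for the element $\xi(x\ot y)\in H(E\ot F)$ one has $R_\ii(\xi(x\ot y))=[(f\ot g)(x\ot y)]\xi=f(x)g(y)\,\xi$, so $|f(x)g(y)|=\|R_\ii(\xi(x\ot y))\|\le\|R\|_{lb}\|\xi(x\ot y)\|_l\le\|\xi(x\ot y)\|_l$. Taking the supremum over all such $f,g$ gives $\|x\|\|y\|\le\|\xi(x\ot y)\|_l$, and combined with the cross-norm estimate $\|\xi(x\ot y)\|_{pl}\le\|x\|\|y\|$ from (\ref{underl}) together with the already-known inequality $\|\xi(x\ot y)\|_{pl}\ge\|\xi(x\ot y)\|_l$, we conclude that all these quantities are equal. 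Since a norm on $E\ot F$ is determined by its values on all of $E\ot F$, not merely on elementary tensors, the final step is to observe that the two underlying norms, being cross-norms on $E\ot F$ that agree on elementary tensors and are sandwiched as $\|\cd\|_{\text{from }pl}\ge\|\cd\|_{\text{from }l}$ everywhere --- and here one feeds a general $u=\sum x_k\ot y_k$ into the same factorization $R$ to get $\|u\|_{\text{from }l}\ge\sup_{f,g}|(f\ot g)(u)|=\|u\|_{\text{from }pl}$, where the last equality uses that the underlying space of $E\ot_{pl}F$ is exactly the projective-type norm controlled from above --- actually forces $\|u\|_{\text{from }pl}=\|u\|_{\text{from }l}$ for every $u$.

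The main obstacle is the last point: one must be careful that equality of two cross-norms on elementary tensors does not by itself give equality on all of $E\ot F$, so the duality argument really has to be run for a general $u\in E\ot F$, not just for elementary tensors. Concretely, for arbitrary $u$ I would use that $\|u\|_{\text{from }l}\ge|R_\ii(\xi u)|/\|\xi\|=|(f\ot g)(u)|$ for every contractive pair $(f,g)$ (since $R$ exists with $\|R\|_{lb}\le\|f\|\|g\|$), hence $\|u\|_{\text{from }l}\ge\sup_{\|f\|,\|g\|\le1}|(f\ot g)(u)|$; on the other hand the underlying space of $E\ot_{pl}F$ has $\|u\|_{\text{from }pl}\le\sup_{\|f\|,\|g\|\le1}|(f\ot g)(u)|$ is \emph{false} in general (that sup is the injective norm), so instead I should bound $\|u\|_{\text{from }pl}$ directly: represent $\xi u\in H(E\ot_{pl}F)$ using Proposition~\ref{presU} and push it through $R_\ii$, or more simply invoke that $E\ot_{pl}F$ and $E\ot_l F$ have the \emph{same} universal factorization property for $\co$-valued bilinear forms, which pins down the underlying norm from both sides. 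The cleanest route, which I would adopt, is: the underlying norm of a $PL$-space $G$ equals $\sup\{|f_\ii(\xi w)|:\|f\|\le1\}$ is again only a lower bound (the minimal norm), so this does not immediately work either --- therefore the argument genuinely needs the factorization of the \emph{projective}-type bilinear operator $\vartheta\colon E\times F\to E\ot_p F$ type construction, and I expect the honest proof to reduce to the observation that both $\|\cd\|_{pl}$ and $\|\cd\|_l$ restricted to the subspace $\xi\ot(E\ot F)\subset H(E\ot F)$ coincide with the \emph{projective} tensor norm $\|\cd\|_p$ on $E\ot F$, the $pl$ case because $\xi u=S^*\cd[\sum ex_k\di v_k]$ forces $\|\xi u\|_{pl}\le\sum\|x_k\|\|v_k\|$ hence $\|\xi u\|_{pl}\le\|u\|_p$ and the reverse comes from $f\times g$, and the $l$ case identically. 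That common identification with $\|\cd\|_p$ is the crux.
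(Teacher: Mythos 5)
Your treatment of elementary tensors is fine, and so is the observation that $\|\cd\|_{pl}\ge\|\cd\|_{l}$ settles one direction; but the step you yourself flag as the crux --- that both underlying norms coincide with the projective norm $\|\cd\|_p$ of the underlying spaces --- is false, and with it the whole passage from elementary tensors to general $u=\sum_k x_k\ot y_k$ collapses. The duality via $f\times g$ (Propositions~\ref{autcombil} and \ref{comd2}) gives only $\|\xi u\|_l\ge\sup\{|(f\ot g)(u)|:\|f\|,\|g\|\le1\}$, i.e.\ the \emph{injective} norm of $u$, as you notice yourself in passing; it cannot be upgraded to the projective norm. And the identification with $\|\cd\|_p$ genuinely fails: take $E=F=\ell_2$ with the minimal $L$--norm and $u=\sum_{k=1}^n{\bf p}^k\ot{\bf p}^k$. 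By Theorem~\ref{theo4} the underlying normed space of $\ell_2\ot_l\ell_2$ is the pre--Hilbert space $\ell_2\ot_{hil}\ell_2$, so $\|\xi u\|_l=\sqrt n$, while $\|u\|_p=n$ and $\|u\|_i=1$; the true value $\sqrt n$ lies strictly between your lower and upper bounds, so neither pins it down. (There is no conflict with Proposition~\ref{n}: the element $V=\sum_k e_k({\bf p}^k\ot{\bf p}^k)$ there involves $n$ different vectors of $H$ and is not of the form $\xi u$, so the underlying norm does not see it.)

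The inequality you are missing, $\|\xi u\|_{pl}\le\|\xi u\|_l$ for \emph{all} $u\in E\ot F$, is obtained in the paper without any norm computation: let $G$ be the underlying normed space of $E\ot_{pl}F$ endowed with the \emph{minimal} $PL$--norm of Example~\ref{ex1}. Since the minimal quantization is dominated by any other quantization with the same underlying norm, $\vartheta:E\times F\to G$ remains $L$--contractive; and since $G$ is an $L$--space (Section~6), the universal property of $E\ot_l F$ produces an $L$--contractive, hence contractive, operator $E\ot_l F\to G$ acting as the identity on $E\ot F$. This is exactly the inverse map you need, and it handles all tensors at once.
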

\begin{proof}
Since the latter operator is obviously contractive, our task is to show that its inverse operator 
is also contractive. Denote by $G$ be the underlying normed space of $E\ot_{pl}F$, endowed with the 
minimal $PL$-norm (see Example \ref{ex1}). Since $\vartheta:E\times F\to E\ot_{pl}F$ is 
$L$--contractive, the same is true, if we consider $\vartheta$ with range $G$. But, as we know, $G$ 
is an $L$--space. Therefore $\vartheta$ gives rise to the $L$--contractive operator between $E\ot_l 
F$ and $G$, which is contractive as an operator between the underlying normed spaces. But the 
latter is, of course, the desired inverse operator.
 \end{proof}
\bigskip
This research was supported by the Russian Foundation for Basic Research (grant No. 15-01-08392).

\begin{flushleft}
Moscow State (Lomonosov) University\\ Moscow, 111991, Russia\\
E-mail address: helemskii@rambler.ru 
\end{flushleft}

\ed